\theoremstyle{plain}
\newtheorem*{theorem*}{Theorem}
\newtheorem{theorem}{Theorem}[section] 
\newtheorem{lemma}[theorem]{Lemma}
\newtheorem{proposition}[theorem]{Proposition}
\newtheorem{corollary}[theorem]{Corollary}
\newtheorem{conjecture}[theorem]{Conjecture}
\theoremstyle{definition}
\newtheorem{definition}[theorem]{Definition}
\newtheorem{remark}[theorem]{Remark}
\numberwithin{equation}{section}
\DeclareMathOperator{\im}{Im}
\DeclareMathOperator{\re}{Re}
\DeclareMathOperator{\Wr}{Wr}
\title{Exceptional Jacobi polynomials}
\author[1]{Niels Bonneux}
\affil[1]{Katholieke Universiteit Leuven, Department of Mathematics, 
	Celestijnenlaan~200B box 2400, 3001 Leuven, Belgium. 
	E-mail:~{\tt niels.bonneux@kuleuven.be}
}
\date{\today}
\providecommand{\keywords}[1]{\textit{\textit{Keywords: }} #1}
\newcommand*\pFqskip{8mu}
\newcommand*\pFq{\begingroup
	\catcode`\,\active
	\def ,{\mskip\pFqskip\relax}%
	\dopFq
}
\def\dopFq#1#2#3#4#5{%
	{}_{#1}F_{#2}\biggl[\genfrac..{0pt}{}{#3}{#4};#5\biggr]%
	\endgroup
}
\begin{document}

\maketitle

%%%%%%%%%%%%%%%%%%%%%%%%%%%%%%%%%%%%%%%%%%%%%
%%%%%%%%%%%%%%%% Abstract %%%%%%%%%%%%%%%%%%%
%%%%%%%%%%%%%%%%%%%%%%%%%%%%%%%%%%%%%%%%%%%%%

\begin{abstract}
In this paper we present a systematic way to describe exceptional Jacobi polynomials via two partitions. We give the construction of these polynomials and restate the known aspects of these polynomials in terms of their partitions. The aim is to show that the use of partitions is an elegant way to label these polynomials. Moreover, we prove asymptotic results according to the regular and exceptional zeros of these polynomials.
\end{abstract}

\keywords{Exceptional polynomials, Jacobi polynomials, partitions, Wronskian.}

%%%%%%%%%%%%%%%%%%%%%%%%%%%%%%%%%%%%%%%%%%%%%%%
%%%%%%%%%%%%%%%% Introduction %%%%%%%%%%%%%%%%%
%%%%%%%%%%%%%%%%%%%%%%%%%%%%%%%%%%%%%%%%%%%%%%%

\section{Introduction}\label{sec:introduction}

%%% introduction X-polynomials %%% 
In 1929, Bochner classified the polynomials which are eigenpolynomials of a second order operator \cite{Bochner}. Later, Lesky and others described the orthogonality of these polynomials \cite{Lesky}. In the last decade, this theory has been extended to exceptional orthogonal polynomials. These new polynomials are obtained via a series of Darboux transformations (or Darboux-Crum transformation) \cite{Crum,Darboux} starting with a second order operator described by Bochner's classification. The new eigenpolynomials generalize their classical counterparts yet there is one striking difference: the exceptional polynomials have gaps in their degree sequence. Stated differently, there is no exceptional polynomial for every degree. Remarkably, in specific situations these exceptional polynomials still form a complete set of orthogonal polynomials. 

%%% Classification and orthogonality %%%
A significant step towards a full classification of exceptional orthogonal polynomials is discussed in \cite{GarciaFerrero_GomezUllate_Milson}. The authors proved that there are only three kinds of exceptional orthogonal polynomials: exceptional Hermite, exceptional Laguerre and exceptional Jacobi polynomials. The exceptional Hermite setting is the most studied, and a full description is given in \cite{GomezUllate_Grandati_Milson-b}. One can also approach these polynomials by taking the limit of Casorati determinants of Charlier polynomials, see \cite{Duran-Hermite}. The orthogonality for the Laguerre situation is covered in \cite{Duran-Laguerre1,Duran_Perez-Laguerre2}. In the case of exceptional Jacobi polynomials, sufficient conditions for orthogonality are given in \cite{Duran-b}. They seem to be necessary too, up to an extra technical condition. Hence the orthogonality is close to being finished.

%%% Set-up paper + reference for Hermite and Laguerre %%%
In this paper, we enter the Jacobi setting. We give a description of how these polynomials are built and inspect the issue of the asymptotic behavior of their zeros. For the Laguerre case, we refer to \cite{Bonneux_Kuijlaars} where the same aspects are treated. The exceptional Hermite polynomials are discussed in \cite{GomezUllate_Grandati_Milson-b}, the asymptotic behavior of their zeros is explained in \cite{Kuijlaars_Milson}.

%%% Jacobi polynomials and X-Jacobi polynomials %%%
Jacobi polynomials are well-investigated polynomials \cite{Szego} and are named for the German mathematician Carl Jacobi. The first mention of exceptional Jacobi polynomials can be found around ten years ago in \cite{GomezUllate_Kamran_Milson-09,Sasaki_Tsujimoto_Zhedanov}. Nowadays, there are several papers that treat different aspects of these polynomials \cite{Grandati_Berard,Dimitrov_Lun,Duran-b,Ho_Odake_Sasaki,Liaw_Littlejohn_Stewart_Wicks,Liaw_Stewart_Osborn,Midya_Roy,Sasaki_Tsujimoto_Zhedanov,Takemura-Heun}. Moreover, some authors refer to these polynomials as multi-indexed Jacobi polynomials \cite{Ho_Sasaki_Takemura,Odake_Sasaki-a,Odake_Sasaki-b,Takemura-Maya}. Most of these papers just consider a specific case of exceptional Jacobi polynomials, for example $X_m$-Jacobi polynomials. Therefore, a first goal of this paper is to give a description of the construction of exceptional Jacobi polynomials.
%%% Goal 1: Describing X-Jacobi polynomials (partitions) %%%
We construct the exceptional Jacobi polynomial via partitions \cite{Andrews}, as we believe this is the most elegant way to approach exceptional polynomials. For exceptional Hermite polynomials, the use of 1 partition is sufficient to capture all possibilities \cite{GomezUllate_Grandati_Milson-b}. For Laguerre polynomials, 2 partitions are needed \cite{Bonneux_Kuijlaars}. Like in the Laguerre setting, the exceptional Jacobi polynomials are obtained via two partitions so that we identify the $X_m$-Jacobi polynomials as a specific choice of these partitions. 

%%% Goal 2: Zeros of X-Jacobi polynomials %%%
The zeros of the Jacobi polynomials are well-investigated. When these polynomials are orthogonal on the interval $[-1,1]$, all the zeros are simple and lie in the open interval. For exceptional Jacobi polynomials, the zeros can be outside this interval or lie in the complex plane. For special cases of exceptional Jacobi polynomials, the behavior has already been studied in \cite{GomezUllate_Marcellan_Milson,Ho_Sasaki,Horvath}. As we consider a general approach, we cover most of these known results. We prove that the number of regular zeros tends to infinity as the degree tends to infinity. Next, the asymptotic behavior of these zeros is related to the Bessel function and the Arcsine distribution. For the exceptional zeros we derive that these zeros are attracted by the simple zeros of the generalized Jacobi polynomial (which is defined as a specific Wronskian). These results were conjectured in \cite{Kuijlaars_Milson}, where the authors tackled the Hermite case. Their techniques are now transferred to the Jacobi case where the Laguerre case was elaborated in \cite{Bonneux_Kuijlaars}. Finally, we end with a conjecture dealing with simple zeros. We believe that the generalized Jacobi polynomials have simple zeros when the corresponding exceptional Jacobi polynomials form a complete set of orthogonal polynomials. This conjecture is comparable to the Veselov conjecture, which deals with simple zeros of the Wronskian of an arbitrary (finite) sequence of Hermite polynomials \cite{Felder_Hemery_Veselov}, or to the conjecture in the Laguerre case \cite{Bonneux_Kuijlaars}. 

%%% Composition of the paper %%%
We composed the paper as follows. In section \ref{sec:XJP} we define the generalized and exceptional Jacobi polynomials properly. Section \ref{sec:DegreeLC} deals with the degree and leading coefficient of these polynomials. A full clarification of why it is sufficient to use only two partitions is given in Section \ref{sec:ConstructionGJP} and \ref{sec:ConstructionXJP}. Finally, the results dealing with the asymptotic behavior of the zeros are given in Section \ref{sec:ZerosResults}. A lower bound on the number of regular zeros is presented in Theorem \ref{thm:XJPN(n)}. The results dealing with the asymptotic behavior of the regular zeros are given in Corollary \ref{cor:XJPRegularZeros} and Theorem \ref{thm:XJPArcsineLaw}. For the exceptional zeros, thasymptotic result is stated in Theorem \ref{thm:XJPExceptionalZeros}. The proofs are listed in Section \ref{sec:ZerosProofs}.

%%%%%%%%%%%%%%%%%%%%%%%%%%%%%%%%%%%%%%%%%%%%%%%%%%%%%%%%%%%%%
%%%%%%%%%%%%%%%%% Exceptional Polynomials %%%%%%%%%%%%%%%%%%%
%%%%%%%%%%%%%%%%%%%%%%%%%%%%%%%%%%%%%%%%%%%%%%%%%%%%%%%%%%%%%

\section{Exceptional Jacobi polynomials in terms of partitions}\label{sec:XJP}
We recall the definition of Jacobi polynomials and elaborate on a few elementary properties. Next, we define the generalized and exceptional Jacobi polynomial as the Wronskian of quasi-rational eigenfunctions of the Jacobi differential operator. The whole section does not contain any new results, most of them are obtained from \cite{Curbera_Duran,Duran-b} and translated to our set-up.

\subsection{Jacobi polynomials}
The Jacobi polynomial has two parameters $\alpha,\beta\in\mathbb{R}$ and is denoted by $P^{(\alpha,\beta)}_{n}$. These polynomials can be defined by Rodrigues' formula \cite[Formula (4.3.1)]{Szego},
\begin{equation}\label{eq:Rodrigues}
	P^{(\alpha,\beta)}_{n}(x) 
		= \frac{(-1)^n}{2^n n!}(1-x)^{-\alpha}(1+x)^{-\beta} \frac{d^n}{dx^n} \left((1-x)^{\alpha+n}(1+x)^{\beta+n}\right).
\end{equation}
If we evaluate the $n^{\text{th}}$ derivative in the Rodrigues' formula, we get an explicit expression for these polynomials,
\begin{equation}\label{eq:JacobiExplicit}
	P^{(\alpha,\beta)}_{n}(x) 
		=\frac{1}{2^{n}} \sum_{j=0}^{n}\binom{n+\alpha}{j}\binom{n+\beta}{n-j}(x-1)^{n-j}(x+1)^{j}.
\end{equation}
This expression shows that $P^{(\alpha,\beta)}_{n}$ is a polynomial for any choice of $\alpha,\beta \in\mathbb{R}$. Contrary to the other classical orthogonal polynomials, the subindex $n$ is not always the degree of the polynomial as a possible degree reduction can occur \cite[Section 4.22 (4.3.1)]{Szego}. To be precise, $\deg P^{(\alpha,\beta)}_{n}=n$ if and only if $\alpha+\beta+n \notin \{-1,-2,\dots,-n\}$. Therefore, one often puts conditions for the parameters $\alpha$ and $\beta$ such that the subindex indicates the degree. 

If the parameters satisfy $\alpha>-1$ and $\beta>-1$, then the Jacobi polynomials are orthogonal on $[-1,1]$ with respect to the positive weight function $(1-x)^{\alpha}(1+x)^{\beta}$. That is,
\begin{equation}\label{eq:JacobiOrthogonal}
	\int_{-1}^{1} P^{(\alpha,\beta)}_{n}(x) P^{(\alpha,\beta)}_{m}(x) (1-x)^{\alpha}(1+x)^{\beta} dx =0,
	\qquad n\neq m.
\end{equation}
As a result, all their zeros are simple and belong to the open interval $(-1, 1)$. More results concerning orthogonality for general parameters can be found in \cite{Kuijlaars_Martinez_Orive}.

\begin{table} 
	\centering
	\begin{tabular}{l }
		$\begin{aligned}[t]
		&\text{Eigenfunction} 
		&& \text{\hspace{2.75cm} Eigenvalue}
		\end{aligned}$ \\
		\hline
		\hline
		$\begin{aligned}[t]
		&P^{(\alpha,\beta)}_{n}(x)
		&&  n(n+\alpha+\beta+1) \\
		&(1+x)^{-\beta} P^{(\alpha,-\beta)}_{n}(x)
		&& n(n+\alpha-\beta+1)-\beta(1+\alpha) \\
		&(1-x)^{-\alpha} P^{(-\alpha,\beta)}_{n}(x)
		&& n(n-\alpha+\beta+1)-\alpha(1+\beta) \\
		&(1+x)^{-\beta} (1-x)^{-\alpha} P^{(-\alpha,-\beta)}_{n}(x)
		&& n(n-\alpha-\beta+1)-(\alpha+\beta)
		\end{aligned}$ \\
		\hline
	\end{tabular}	
	\caption{Quasi-rational eigenfunctions and eigenvalues of the Jacobi operator \eqref{eq:DVJac1}.} 
	\label{tab:1}
\end{table}

The Jacobi polynomials are eigenfunctions of the differential operator
\begin{equation}\label{eq:DVJac1}
	y \mapsto (x^2-1)y''+\left(\alpha-\beta+(\alpha+\beta+2)x\right)y'
\end{equation}
with eigenvalue $n(n+\alpha+\beta+1)$. This operator also has other eigenfunctions which consist of a Jacobi polynomial part, they are listed in Table \ref{tab:1}. In fact, this table consists of all eigenfunctions which are quasi-rational functions, i.e., their log derivative is a polynomial \cite[Section 2.2]{Erdelyi}. We use these eigenfunctions to construct the generalized and exceptional Jacobi polynomial in the following sections. A well-studied feature is that the operator \eqref{eq:DVJac1} can be transformed into an operator which has the following Schr\"odinger form
\begin{equation}\label{eq:DVJac2}
	y \mapsto - y'' + V(x) y,
\end{equation}
i.e., the differential equation of a Sturm-Liouville problem. In our case, the potential $V(x)$ is given by
\begin{equation}\label{eq:Potential}
	V(x)=
		\frac{\left(\alpha-\frac{1}{2}\right)\left(\alpha+\frac{1}{2}\right)}{\sin^2(x)} + \frac{\left(\beta-\frac{1}{2}\right)\left(\beta+\frac{1}{2}\right)}{\cos^2(x)}
\end{equation}
which is (up to a constant) equal to the Darboux-P\"oschl-Teller potential \cite{Grandati_Berard,GomezUllate_Grandati_Milson-L+J,Grandati_Quesne}. It is invariant if we replace 1 or both parameters $\alpha$ and $\beta$ by $-\alpha$ and $-\beta$ respectively. The transformation of \eqref{eq:DVJac1} into \eqref{eq:DVJac2} has to be interpreted as follows, if $y(x)$ is an eigenfunction of \eqref{eq:DVJac1}, then $\sin(x)^{\alpha+\frac{1}{2}} \cos(x)^{\beta+\frac{1}{2}} y(\cos(2x))$ is an eigenfunction of \eqref{eq:DVJac2}. Hence the given eigenfunctions in Table \ref{tab:1} transform to eigenfunctions of \eqref{eq:DVJac2} which are listed in Table \ref{tab:2}. We write the new obtained eigenfunctions as $\varphi^{(\alpha,\beta)}_{n}$, i.e.,
\begin{equation*}
	\varphi^{(\alpha,\beta)}_{n}(x)	=  \sin(x)^{\alpha+\frac{1}{2}} \cos(x)^{\beta+\frac{1}{2}} P^{(\alpha,\beta)}_{n}\left(\cos(2x)\right).
\end{equation*}
Note that all quasi-rational eigenfunctions can be obtained from $\varphi^{(\alpha,\beta)}_{n}$ using the invariance property for the potential, i.e., replacing $(\alpha,\beta)$ by $(\pm\alpha,\pm\beta)$, see Table \ref{tab:2}. This result originates from \cite{Grandati_Berard} where more information about the symmetries can be found.

The transformation of the Jacobi operator into its Schr\"odinger form is important in the context of exceptional polynomials, as it is well-known that exceptional operators and the corresponding exceptional eigenpolynomials \cite{GarciaFerrero_GomezUllate_Milson}, are obtained via a series of Darboux transformations which is directly applicable to an operator in Schr\"odinger form \cite{Crum,Darboux,Sasaki_Tsujimoto_Zhedanov}. 

\begin{table} 
	\centering
	\begin{tabular}{l }
		$\begin{aligned}[t]
			&\text{Eigenfunction} 
			&& \text{\hspace{6.85cm} Eigenvalue}
		\end{aligned}$ \\
		\hline
		\hline
		$\begin{aligned}[t]
		\varphi^{(\alpha,\beta)}_{n}(x)	
			&=  \sin(x)^{\alpha+\frac{1}{2}} \cos(x)^{\beta+\frac{1}{2}} P^{(\alpha,\beta)}_{n}\left(\cos(2x)\right) 
			&&  4\left(n+0.5(\alpha+\beta+1)\right)^{2} \\
		\varphi^{(\alpha,-\beta)}_{n}(x)	
			&=  \sin(x)^{\alpha+\frac{1}{2}} \cos(x)^{-\beta+\frac{1}{2}} P^{(\alpha,-\beta)}_{n}\left(\cos(2x)\right) 
			&& 4\left(n+0.5(\alpha-\beta+1)\right)^{2} \\
		\varphi^{(-\alpha,\beta)}_{n}(x)
			&=  \sin(x)^{-\alpha+\frac{1}{2}} \cos(x)^{\beta+\frac{1}{2}} P^{(-\alpha,\beta)}_{n}\left(\cos(2x)\right)
			&& 4\left(n+0.5(-\alpha+\beta+1)\right)^{2} \\
		\varphi^{(-\alpha,-\beta)}_{n}(x)	
			&=  \sin(x)^{-\alpha+\frac{1}{2}} \cos(x)^{-\beta+\frac{1}{2}} P^{(-\alpha,-\beta)}_{n}\left(\cos(2x)\right)
			&& 4\left(n+0.5(-\alpha-\beta+1)\right)^{2} 
		\end{aligned}$ \\
		\hline
	\end{tabular}	
	\caption{Eigenfunctions and eigenvalues of the operator \eqref{eq:DVJac2}.} 
	\label{tab:2}
\end{table}

\subsection{Generalized Jacobi polynomials}\label{sec:GeneralizedJacobi}
In this section we define the generalized Jacobi polynomial as a Wronskian of eigenfunctions from Table \ref{tab:1}. To end up with a polynomial, we multiply the Wronskian with an appropriate prefactor. We start by defining a Wronskian and a partition.

The Wronskian of a set of sufficiently differentiable functions $f_1,\dots,f_r$ is defined as the determinant of the $r\times r$-matrix $M$ where the entries are $M_{ij}= \frac{d^{i-1}}{dx^{i-1}}f_j$ for $1\leq i,j \leq r$. We write the Wronskian as $\Wr[f_1,\dots,f_r]$. For our purpose, we take the functions $f_i$ equal to the eigenfunctions in Table \ref{tab:1}, see \eqref{eq:fj1}-\eqref{eq:fj2} below.

A partition $\lambda$ of a non-negative integer $N$, denoted by $\lambda \vdash N$, is a weakly decreasing sequence of positive integers $(\lambda_i)_{i=1}^{r}$ such that $|\lambda|:=\sum_{i=1}^{r} \lambda_i=N$. Here, $r$ is called the length of the partition. For each partition $\lambda$, we define a corresponding sequence $n_{\lambda}$ as the strictly decreasing sequence $(n_i)_{i=1}^{r}$ such that $n_i=\lambda_i+r-i$ for $i=1,\dots,r$. Hence $\sum_{i=1}^{r}n_i = |\lambda|+\frac{r(r-1)}{2}$. A partition is called even if $r$ is even and $\lambda_{2i-1}=\lambda_{2i}$ for $i=1,\dots,\frac{r}{2}$.

Now we are able to define the generalized Jacobi polynomial. As mentioned before, this polynomial is defined as the Wronskian of eigenfunctions from Table \ref{tab:1} with an appropriate prefactor. To do this, there are four different kinds of possible sets of eigenfunctions which we can include in the Wronskian. However, it turns out that it is sufficient to take only the first and second type of eigenfunctions in Table \ref{tab:1}. The reasoning behind this is explained in detail in Section \ref{sec:ConstructionGJP} (and was already proven before in \cite{Takemura-Maya}). 

To fix the degrees of the Jacobi polynomials, take two partitions $\lambda$ and $\mu$ of lengths $r_1$ and $r_2$ with corresponding sequences $n_{\lambda}=(n_i)_{i=1}^{r_1}$ and $n_{\mu}=(m_i)_{i=1}^{r_2}$. Set $r=r_1+r_2$ and define
\begin{equation} \label{eq:OmegaLaMu}
	\Omega^{(\alpha,\beta)}_{\lambda, \mu} 
		:=  (1+x)^{(\beta+r_1)r_2} \cdot \Wr\left[f_1, \ldots, f_r \right]
\end{equation}
where 
\begin{align} 
	f_j(x) & = P^{(\alpha,\beta)}_{n_j}(x),  
		&& j=1, \ldots, r_1,  
		\label{eq:fj1} \\
		f_{r_1+j}(x) & = (1+x)^{-\beta}P^{(\alpha,-\beta)}_{m_j}(x), 
		&& j = 1, \ldots, r_2,
		\label{eq:fj2}
\end{align}
are the eigenfunctions as described in Table \ref{tab:1}; the partition $\lambda$ deals with the first type and $\mu$ deals with the second type. Hence, by construction $\Omega^{(\alpha,\beta)}_{\lambda, \mu}$ is the Wronskian of the first two types of eigenfunctions with an appropriate prefactor $(1+x)^{(\beta+r_1)r_2}$ such that we end up with a polynomial. The fact that this prefactor is well-chosen holds in a more general setting (here we work specifically with Jacobi polynomials) and is proven in Proposition \ref{prop:degwr}. 

\begin{definition}\label{def:GJP}
The polynomial $\Omega^{(\alpha,\beta)}_{\lambda, \mu}$ defined in \eqref{eq:OmegaLaMu} is called the generalized Jacobi polynomial of parameters $\alpha$ and $\beta$ associated with partitions $\lambda$ and $\mu$. When both partitions are empty we set $\Omega^{(\alpha,\beta)}_{\emptyset, \emptyset}\equiv 1$.
\end{definition}

\begin{remark}
We defined \eqref{eq:OmegaLaMu} as the generalized Jacobi polynomial as it definitely generalizes the (classical) Jacobi polynomial \eqref{eq:JacobiExplicit}. The special case $\lambda=(n)$ and $\mu=\emptyset$ corresponds to the Jacobi polynomial, i.e., $\Omega^{(\alpha,\beta)}_{(n), \emptyset}=P_n^{(\alpha,\beta)}$. Similarly, if $\lambda=\emptyset$ and $\mu=(m)$ we get $\Omega^{(\alpha,\beta)}_{\emptyset,(m)}=P_m^{(\alpha,-\beta)}$.

The name generalized Jacobi polynomial should not be compared to the Hermite case. For the Hermite case, one often refers to the generalized Hermite polynomial as the Wronskian of Hermite polynomials of consecutive degrees, and not to the Wronskian of an arbitrary (finite) sequence of Hermite polynomials. In our setting, we get that the generalized Jacobi polynomial (for $\mu=\emptyset$) is the Wronskian of an arbitrary (finite) sequence of Jacobi polynomials (with fixed parameters).
\end{remark}

The polynomial \eqref{eq:OmegaLaMu} is defined for every parameters $\alpha$ and $\beta$. Next, we restrict the domain of the parameters such that there is no degree reduction for the Jacobi polynomials and the Wronskian does not vanish.
\begin{itemize}	
	\item[1.] \textbf{No degree reduction} \\
	As discussed before, the degree of a Jacobi polynomial $P_n^{(\alpha,\beta)}$ is not always indicated by its subindex $n$. Therefore we put the following (necessary and sufficient) conditions for the parameters $\alpha$ and $\beta$ such that the subindices $n_i$ and $m_j$ equal the degree of the corresponding Jacobi polynomial in \eqref{eq:fj1} and \eqref{eq:fj2}.
	\begin{equation}\label{Condition2GJP}
		\begin{aligned}
			&\alpha+\beta+n_i \notin \{-1,-2,\dots,-n_i\},	&&\qquad i=1,\dots,r_1, \\
			&\alpha-\beta+m_i \notin \{-1,-2,\dots,-m_i\},	&&\qquad i=1,\dots,r_2. 
		\end{aligned}
	\end{equation}
	
	Observe that if $\alpha+\beta>-1$, then the first $r_1$ conditions in \eqref{Condition2GJP} are satisfied. Similarly, if $\alpha-\beta>-1$, then the last $r_2$ conditions are satisfied.
	
	\item[2.] \textbf{Independent eigenfunctions} \\
	If the conditions \eqref{Condition2GJP} are satisfied, we still need other conditions to determine the degree of the polynomial \eqref{eq:OmegaLaMu} properly. The Wronskian should consist of linearly independent eigenfunctions, otherwise the Wronskian vanishes. Naturally, all eigenfunctions of the same type, i.e., $f_1,\dots,f_{r_1}$ (respectively $f_{r_1+1},\dots,f_r$) are linearly independent as all elements in the sequence $n_{\lambda}$ (respectively $n_{\mu}$) are pairwise different. However, it is possible that eigenfunctions of different types are linearly dependent. For example, when $\beta=0$, an eigenfunction of type \eqref{eq:fj1} coincides with an eigenfunction of type \eqref{eq:fj2} if both polynomials have the same degree. Then, the Wronskian would vanish. In general, for fixed partitions $\lambda$ and $\mu$ and under conditions \eqref{Condition2GJP}, the Wronskian in \eqref{eq:OmegaLaMu} consists of independent functions if and only if
	\begin{align} \label{Condition1GJP}
		&\beta \neq m_j-n_i, && i=1,\dots,r_1 \text{ and } j=1,\dots,r_2.
	\end{align}
	The proof of these sufficiently and necessary conditions follows directly from the property that all functions $f_1,\dots,f_r$ have a different degree in the broad sense. Here, by degree in the broad sense, we mean $\deg\left( (1+x)^{-\beta}P^{(\alpha,-\beta)}_{m}\right) = \deg\left( P^{(\alpha,-\beta)}_{m}\right)- \beta$. 
	
	To end, we have $m_1\geq m_j$ for all $j=1,\dots,r_2$ and $n_i\geq n_{r_1}$ for all $i=1,\dots,r_1$. Hence, the set of conditions \eqref{Condition1GJP} is automatically fulfilled when $\beta> m_1-n_{r_1}$.
\end{itemize}

Now we are able to state the degree and leading coefficient of the generalized Jacobi polynomial where we restrict the parameters as described in \eqref{Condition2GJP} and \eqref{Condition1GJP}. We make use of the Pochhammer symbol $(x)_n=x(x+1)\cdots(x+n-1)$ where $n$ is a non-negative integer and $x\in\mathbb{R}$.

\begin{lemma}\label{lem:deg}
For any partition $\lambda$ and $\mu$, take $\alpha,\beta\in\mathbb{R}$ such that the conditions \eqref{Condition2GJP} and \eqref{Condition1GJP} are satisfied. Then $\Omega^{(\alpha,\beta)}_{\lambda,\mu}$ is a polynomial of degree $|\lambda|+|\mu|$ with leading coefficient
\begin{equation}\label{eq:LeadingCoeff}
	\frac{\prod\limits_{i=1}^{r_1}(n_i+\alpha+\beta+1)_{n_i} \prod\limits_{j=1}^{r_2}(m_j+\alpha-\beta+1)_{m_j}} {2^{\sum\limits_{i=1}^{r_1} n_i + \sum\limits_{j=1}^{r_2}m_j}\prod\limits_{i=1}^{r_1}n_i!\prod\limits_{j=1}^{r_2}m_j!}
	\Delta(n_{\lambda})\Delta(n_{\mu})\prod\limits_{i=1}^{r_1}\prod\limits_{j=1}^{r_2}(m_j-n_i-\beta).
\end{equation}
Here, $\Delta(n_{\lambda})=\prod\limits_{1\leq i < j \leq r_1} (n_j-n_i)$ and $\Delta(n_{\mu})=\prod\limits_{1\leq i < j \leq r_2} (m_j-m_i)$ are the Vandermonde determinants.
\end{lemma}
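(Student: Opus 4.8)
The plan is to read off the degree and leading coefficient of $\Omega^{(\alpha,\beta)}_{\lambda,\mu}$ from its behaviour as $x\to\infty$, and to treat polynomiality separately. First I would record, from \eqref{eq:JacobiExplicit} and the Chu--Vandermonde identity $\sum_{j}\binom{n+\alpha}{j}\binom{n+\beta}{n-j}=\binom{2n+\alpha+\beta}{n}$, that whenever there is no degree reduction the Jacobi polynomial $P^{(\alpha,\beta)}_{n}$ has leading term $\tfrac{(n+\alpha+\beta+1)_n}{2^n n!}x^n$; condition \eqref{Condition2GJP} says exactly that this coefficient is nonzero for each $n_i$ and each $m_j$. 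Hence every function occurring in the Wronskian \eqref{eq:OmegaLaMu}--\eqref{eq:fj2} has, for $x>1$, a convergent expansion in descending powers of $x$: $f_j(x)=c_j x^{n_j}+\dots$ with $c_j=\tfrac{(n_j+\alpha+\beta+1)_{n_j}}{2^{n_j}n_j!}$, and $f_{r_1+j}(x)=(1+x)^{-\beta}P^{(\alpha,-\beta)}_{m_j}(x)=c'_j x^{m_j-\beta}+\dots$ with $c'_j=\tfrac{(m_j+\alpha-\beta+1)_{m_j}}{2^{m_j}m_j!}$. Writing the corresponding ``exponents'' as $e_1=n_1,\dots,e_{r_1}=n_{r_1}$, $e_{r_1+1}=m_1-\beta,\dots,e_{r}=m_{r_2}-\beta$, these are pairwise distinct precisely because of $n_1>\cdots>n_{r_1}$, $m_1>\cdots>m_{r_2}$ and the conditions \eqref{Condition1GJP}.

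Next I would compute the ``generalised Vandermonde'' Wronskian: factoring $x^{e_k-r+1}$ from column $k$ and $x^{r-i}$ from row $i$ reduces $\Wr[x^{e_1},\dots,x^{e_r}]$ to the determinant of the falling-factorial matrix $\big((e_k)(e_k-1)\cdots(e_k-i+2)\big)_{i,k}$, which equals the ordinary Vandermonde $\prod_{i<k}(e_k-e_i)$, so $\Wr[x^{e_1},\dots,x^{e_r}]=\big(\prod_{i<k}(e_k-e_i)\big)x^{\sum_k e_k-\binom{r}{2}}$. Expanding $\Wr[f_1,\dots,f_r]$ by multilinearity over the descending-power expansions of its columns, every resulting term is a constant times $\Wr[x^{e_1-l_1},\dots,x^{e_r-l_r}]$ with $l_k\ge 0$; such a term has degree $\sum_k e_k-\sum_k l_k-\binom{r}{2}$ when the shifted exponents stay distinct and vanishes otherwise, so the top-degree part is the one with all $l_k=0$, namely $\big(\prod_{i}c_i\big)\big(\prod_j c'_j\big)\Wr[x^{e_1},\dots,x^{e_r}]$, which is nonzero. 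Multiplying by the prefactor $(1+x)^{(\beta+r_1)r_2}=x^{(\beta+r_1)r_2}+\dots$ and splitting $\prod_{i<k}(e_k-e_i)$ into the blocks with both indices $\le r_1$ (giving $\Delta(n_\lambda)$), both $>r_1$ (giving $\Delta(n_\mu)$), and mixed (giving $\prod_{i,j}(m_j-n_i-\beta)$) turns the leading coefficient into exactly \eqref{eq:LeadingCoeff}, while the leading power becomes $\sum_i n_i+\sum_j(m_j-\beta)-\binom{r}{2}+(\beta+r_1)r_2$; using $\binom{r_1+r_2}{2}=\binom{r_1}{2}+\binom{r_2}{2}+r_1r_2$ together with $\sum_i n_i=|\lambda|+\binom{r_1}{2}$ and $\sum_j m_j=|\mu|+\binom{r_2}{2}$ this collapses to $|\lambda|+|\mu|$. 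By \eqref{Condition2GJP}, \eqref{Condition1GJP} and the distinctness of the $n_i$ and $m_j$, none of the factors $(n_i+\alpha+\beta+1)_{n_i}$, $(m_j+\alpha-\beta+1)_{m_j}$, $\Delta(n_\lambda)$, $\Delta(n_\mu)$, $m_j-n_i-\beta$ vanishes, so the leading coefficient is nonzero and the degree is exactly $|\lambda|+|\mu|$.

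The point the growth analysis does not settle, and which I expect to be the real obstacle, is that $\Omega^{(\alpha,\beta)}_{\lambda,\mu}$ is genuinely a polynomial rather than a rational (or branched) function. Factoring $(1+x)^{-\beta}$ out of each of the last $r_2$ columns gives $\Wr[f_1,\dots,f_r]=(1+x)^{-\beta r_2}\det N$ with $N$ rational and regular off $x=-1$, so $\Omega^{(\alpha,\beta)}_{\lambda,\mu}=(1+x)^{r_1r_2}\det N$ is a rational function whose only possible pole is at $x=-1$; together with the polynomial growth at infinity, it suffices to show the pole order of $\det N$ at $x=-1$ is at most $r_1r_2$. This cancellation is exactly what Proposition \ref{prop:degwr} provides in the general setting, so I would invoke it here; alternatively one can run the analysis at $x=-1$ in parallel with the one at infinity (local exponents $0$ with multiplicity $r_1$ for the $f_j$, and $-\beta$ with multiplicity $r_2$ for the $(1+x)^{-\beta}P^{(\alpha,-\beta)}_{m_j}$), but keeping track of the repeated exponents and of the precise order of cancellation is the delicate part. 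Once polynomiality is granted, the degree and leading-coefficient computation of the previous paragraph finishes the proof (the empty-partition case being the trivial $\Omega^{(\alpha,\beta)}_{\emptyset,\emptyset}\equiv 1$).
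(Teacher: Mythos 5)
Your degree and leading-coefficient computation is correct, and at its core it is the same mechanism the paper uses: the paper proves Lemma \ref{lem:deg} by specializing Proposition \ref{prop:degwr}, whose proof is exactly your ``expand each entry into powers, use multilinearity, and evaluate each Wronskian of pure powers as a generalized Vandermonde'' step (your identity $\Wr[x^{e_1},\dots,x^{e_r}]=\bigl(\prod_{i<k}(e_k-e_i)\bigr)x^{\sum_k e_k-\binom{r}{2}}$ is the paper's Lemma \ref{lem:WronskianMonomials}), followed by the same block-splitting of the Vandermonde into $\Delta(n_\lambda)$, $\Delta(n_\mu)$ and $\prod_{i,j}(m_j-n_i-\beta)$ and the same bookkeeping with the prefactor. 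The substantive difference is \emph{where} you expand, and that choice is precisely what costs you polynomiality: expanding at infinity turns $(1+x)^{-\beta}$ into an infinite descending series, which (after the routine justification of term-by-term differentiation and rearrangement for $x>1$) identifies the degree $|\lambda|+|\mu|$ and the coefficient \eqref{eq:LeadingCoeff}, but says nothing at $x=-1$, so for polynomiality you fall back on Proposition \ref{prop:degwr} itself --- legitimate, since that is literally the paper's proof of the lemma, but then the proposition already delivers the degree and leading coefficient too, so your asymptotic computation becomes an independent re-derivation rather than a genuinely different route. The local analysis you flag as ``delicate'' is in fact immediate if you run your own expansion in the shifted variable: write each $P^{(\alpha,\beta)}_{n_i}$ and $P^{(\alpha,-\beta)}_{m_j}$ as a \emph{finite} sum of powers of $1+x$ and keep the factor $(1+x)^{-\beta}$ attached, so that every term of the multilinear expansion is exactly a constant times a single power $(1+x)^{\sum_ik_i+\sum_jl_j-\beta r_2-\binom{r}{2}}$; after multiplying by $(1+x)^{(\beta+r_1)r_2}$ the $\beta$'s cancel and every surviving exponent is a non-negative integer (distinct $k_i\geq0$ force $\sum_ik_i\geq\binom{r_1}{2}$, likewise for the $l_j$), bounded above by $|\lambda|+|\mu|$, with top coefficient \eqref{eq:LeadingCoeff} nonzero under \eqref{Condition2GJP} and \eqref{Condition1GJP}. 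That one change makes your argument self-contained and coincides with the paper's proof of Proposition \ref{prop:degwr} (with $c=1$), with no pole-order bookkeeping at $-1$ needed.
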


This leading coefficient can be divided into two parts. Firstly, we have the leading coefficients of each function $f_1,\dots,f_r$ in the Wronskian, for example the leading coefficient of $P^{(\alpha,\beta)}_{n}(x)$ is given by $\frac{(n+\alpha+\beta+1)_{n}}{2^n n!}$ if there is no degree reduction (this follows from \eqref{eq:JacobiExplicit}). Secondly, we obtain two Vandermonde determinants and a product. These three terms can be seen as the Vandermonde determinant of the values $n_1,n_2,\dots,n_{r_1},m_1-\beta,m_2-\beta,\dots,m_{r_2}-\beta$ and they are related to the fact that we are working with a specific Wronskian. The proof of this result is postponed to Section \ref{sec:DegreeLC}, where we prove a more general statement (as it is not necessary to work with Jacobi polynomials), see Proposition \ref{prop:degwr}. The conditions within this proposition transfer to the conditions \eqref{Condition1GJP} in the Jacobi setting. 

\begin{remark}\label{rem:DefinitionDuran}
The result of Lemma \ref{lem:deg} has already been proven by Dur\'an who used a limit procedure of Casorati determinants of Hahn polynomials \cite{Duran-b}. Moreover, he derived a degree statement in a more general setting, see \cite[Lemma 3.3]{Duran-a}. We present a direct approach in Section \ref{sec:DegreeLC}.

It is not immediately clear that our definition of the generalized Jacobi polynomial \eqref{eq:OmegaLaMu} coincides with Dur\'an's definition \cite[Equation (1.7)]{Duran-b}. Dur\'an defined the polynomial as
\begin{equation}\label{eq:Duran}
	\frac{1}{(1+x)^{(r_2-1)r_2}} \left|\tilde{C}^{(\alpha,\beta)}_{\lambda,\mu}\right|
\end{equation}
where $\tilde{C}^{(\alpha,\beta)}_{\lambda,\mu}$ is the $r\times r$-matrix given by
\begin{align*}
	\left(\tilde{C}^{(\alpha,\beta)}_{\lambda,\mu}\right)_{i,j}
		&=(-1)^{i-1} \left(P^{(\alpha,\beta)}_{n_j}\right)^{(i-1)}(x) &&i=1,\dots,r \text{ and } j=1,\dots,r_1, \\
	\left(\tilde{C}^{(\alpha,\beta)}_{\lambda,\mu}\right)_{i,r_1+j}
		&=(\beta-m_j)_{i-1} (1+x)^{r-i} P^{(\alpha+i-1,-\beta-i+1)}_{m_j}(x) &&i=1,\dots,r \text{ and } j=1,\dots,r_2.
\end{align*}
Next, we clarify that both definitions are the same (up to a possible sign). 

Recall our definition of $\Omega^{(\alpha,\beta)}_{\lambda, \mu}$ in \eqref{eq:OmegaLaMu}. The idea is to write this expression for the generalized Jacobi polynomial as
\begin{equation}\label{eq:DefDur2}
	\Omega^{(\alpha,\beta)}_{\lambda, \mu} 
		=  \frac{1}{(1+x)^{(r_2-1)r_2}} \left|C^{(\alpha,\beta)}_{\lambda,\mu}\right|
\end{equation}
where $C^{(\alpha,\beta)}_{\lambda,\mu}$ is an $r\times r$-matrix depending on the parameters $\alpha,\beta$ and the partitions $\lambda,\mu$. We do not write $\tilde{C}^{(\alpha,\beta)}_{\lambda,\mu}$ in \eqref{eq:DefDur2} as our definitions are not completely the same.

We start by stating a general derivative expression for our functions $f_,\dots,f_r$ as defined in \eqref{eq:fj1} and \eqref{eq:fj2}. Both identities follow simply from Rodrigues' formula \eqref{eq:Rodrigues}. For all $k\in\mathbb{N}$,
\begin{align}
	\frac{d^k}{dx^k} P^{(\alpha,\beta)}_{n}(x) 
		&= \frac{(n+\alpha+\beta+1)_{k}}{2^k} P^{(\alpha+k,\beta+k)}_{n-k}(x), 
	\label{eq:JacobiDerivative}\\
	\frac{d^k}{dx^k}\left((1+x)^{-\beta}P^{(\alpha,-\beta)}_{m}(x)\right)
		&= (m-\beta-k+1)_{k} (1+x)^{-\beta-k}P^{(\alpha+k,-\beta-k)}_{m}(x),
	\nonumber
\end{align}
where we used the Pochhammer symbol and we set $P^{(\alpha,\beta)}_{-N}\equiv 0$ if $N>0$. Using both results, we express the Wronskian in \eqref{eq:OmegaLaMu} as a determinant of the $r\times r$-matrix where the entries of the matrix are given by the above derivatives. If we multiply the last $r_2$ columns of this matrix with the factor $(1+x)^{\beta+r-1}$, we obtain
\begin{equation}\label{eq:DefDur3}
	\Wr[f_1,\dots,f_r]
		= (1+x)^{-r_2(\beta+r-1)}
		\begin{vmatrix}
			C^{(\alpha,\beta)}_{\lambda,\mu}
		\end{vmatrix}
\end{equation}
where $C^{(\alpha,\beta)}_{\lambda,\mu}$ is an $r\times r$-matrix which can be expressed as two blocks, 
\begin{equation*}
	C^{(\alpha,\beta)}_{\lambda,\mu}
		= \begin{pmatrix}
			A^{(\alpha,\beta)}_{\lambda} & B^{(\alpha,\beta)}_{\mu}
		\end{pmatrix}.
\end{equation*}	
The matrix $A^{(\alpha,\beta)}_{\lambda}$ is an $r\times r_1$-block and $B^{(\alpha,\beta)}_{\mu}$ is an $r\times r_2$-block, they are
\begin{align*}
	&A^{(\alpha,\beta)}_{\lambda}
		= \begin{pmatrix}
			P^{(\alpha,\beta)}_{n_1}(x) &\dots &P^{(\alpha,\beta)}_{n_{r_1}}(x) 
			\vspace{0.25cm}\\
			c_{n_1,1} P^{(\alpha+1,\beta+1)}_{n_1}(x) &\dots & c_{n_{r_1},1} P^{(\alpha+1,\beta+1)}_{n_{r_1}}(x)
			\vspace{0.25cm}\\
			\vdots &\ddots &\vdots 
			\vspace{0.25cm}\\
			c_{n_1,r-1} P^{(\alpha+r-1,\beta+r-1)}_{n_1}(x) &\dots & c_{n_{r_1},r-1} P^{(\alpha+r-1,\beta+r-1)}_{n_{r_1}}(x)
		\end{pmatrix} \\
	&B^{(\alpha,\beta)}_{\mu}
		= \begin{pmatrix}
			(1+x)^{r-1} P^{(\alpha,-\beta)}_{m_1}(x) & \dots & (1+x)^{r-1} P^{(\alpha,-\beta)}_{m_{r_2}}(x)
			\vspace{0.25cm} \\
			d_{m_1,1} (1+x)^{r-2} P^{(\alpha+1,-\beta-1)}_{m_1}(x) & \dots & d_{m_{r_2},1} (1+x)^{r-2} P^{(\alpha+1,-\beta-1)}_{m_{r_2}}(x)
			\vspace{0.25cm} \\
			\vdots &\ddots & \vdots
			\vspace{0.25cm} \\
			d_{m_1,r-1} P^{(\alpha+r-1,-\beta-r+1)}_{m_1}(x) & \dots & d_{m_{r_2},r-1} P^{(\alpha+r-1,-\beta-r+1)}_{m_{r_2}}(x)
		\end{pmatrix}	
\end{align*} 
with
\begin{align*}
	& c_{n_j,i}= \frac{(n_j+\alpha+\beta+1)_j}{2^i}, &&j=1,\dots,r_1, \quad i=1,\dots,r-1,\\
	& d_{m_j,i}= (m_j-\beta-i+1)_i, &&j=1,\dots,r_2, \quad i=1,\dots,r-1.
\end{align*}
Combining \eqref{eq:OmegaLaMu} and \eqref{eq:DefDur3}, we obtain \eqref{eq:DefDur2}. Finally, the expressions \eqref{eq:Duran} and the right hand side of \eqref{eq:DefDur2} are the same up to a possible sign. The difference in sign is due to a different choice for the factors $d_{m_j,i}$. Dur\'an prefers to work with $(\beta-m_j)_{i-1}$ instead of our choice of $d_{m_j,i}$ and as a result our definitions differ by a factor $(-1)^{\lfloor \frac{r}{2}\rfloor}$, where we used the floor function, which does not have any further consequences. 
\end{remark}

\begin{remark}\label{rem:type2and3}
The generalized Jacobi polynomial \eqref{eq:OmegaLaMu} is defined using the eigenfunctions $P^{(\alpha,\beta)}_{n_j}$, see \eqref{eq:fj1}, and $(1+x)^{-\beta}P^{(\alpha,-\beta)}_{m_{j}}$, see \eqref{eq:fj2}. As we explain in Section \ref{sec:ConstructionGJP}, when we use all four types of eigenfunctions in Table \ref{tab:1}, we can always reduce to \eqref{eq:OmegaLaMu}. Nevertheless, our choice in \eqref{eq:OmegaLaMu} seems a bit arbitrary; it is also reasonable to define the generalized Jacobi polynomial using  $P^{(\alpha,\beta)}_{n_j}$ and $(1-x)^{-\alpha}P^{(-\alpha,\beta)}_{m_j}$, i.e.,
\begin{equation}\label{eq:OmegaTilde}
	\widetilde{\Omega}^{(\alpha,\beta)}_{\lambda,\mu} 
		:= (1-x)^{(\alpha+r_1)r_2} \cdot \Wr\left[\tilde{f}_1, \ldots, \tilde{f}_r \right]
\end{equation}
where 
\begin{align} 
	\tilde{f}_j(x) & = P^{(\alpha,\beta)}_{n_j}(x),  
	&& j=1, \ldots, r_1,  
	\label{eq:ftilde1}\\
	\tilde{f}_{r_1+j}(x) & = (1-x)^{-\alpha}P^{(-\alpha,\beta)}_{m_j}(x), 
	&& j = 1, \ldots, r_2.
	\label{eq:ftilde2}
\end{align}
In this definition, the parameter $\beta$ is fixed, where in \eqref{eq:OmegaLaMu} we chose to fix the parameter $\alpha$. However, both polynomials $\Omega^{(\alpha,\beta)}_{\lambda,\mu}$ and $\widetilde{\Omega}^{(\alpha,\beta)}_{\lambda,\mu}$ coincide in the following sense,
\begin{equation}\label{eq:Omega-x}
	\Omega^{(\alpha,\beta)}_{\lambda,\mu}(-x)
		= (-1)^{|\lambda|+|\mu|+r_1r_2} \widetilde{\Omega}^{(\beta,\alpha)}_{\lambda,\mu}(x),
		\qquad 
		x\in\mathbb{C}.
\end{equation}
Hence it does not matter which definition \eqref{eq:OmegaLaMu} or \eqref{eq:OmegaTilde} we choose, all results can easily be translated to the other case. Our choice \eqref{eq:OmegaLaMu} was made such that our definition coincides with Dur\'an's choice \cite{Duran-b}, in fact, Dur\'an's choice was arbitrary too. To prove identity \eqref{eq:Omega-x}, consider the general Wronskian property
\begin{equation}\label{eq:Wr2} 
	\Wr[g_1 \circ h,\dots,g_r\circ h](x) 
		= \left( h'(x)\right)^{\frac{r(r-1)}{2}} \cdot \Wr[g_1,\dots,g_r](h(x))
\end{equation}
which holds for sufficiently many differentiable functions $g_1,\dots,g_r,h$. If we apply this result on the Wronskian in \eqref{eq:OmegaLaMu}, where we take $g_i=f_i$ and $h(x)=-x$, we find
\begin{equation}\label{eq:OmegaTildeProof1}
	\Omega^{(\alpha,\beta)}_{\lambda,\mu}(-x)
		= (-1)^{\frac{r(r-1)}{2}} (1-x)^{(\beta+r_1)r_2} \Wr\left[f_1(-x),\dots,f_r(-x)\right]
\end{equation}
where $f_1,\dots,f_r$ are defined in \eqref{eq:fj1} and \eqref{eq:fj2}. Next, the Jacobi polynomials satisfy
\begin{equation}\label{eq:Jacobi-x}
	P^{(\alpha,\beta)}_{n}(-x) 
		= (-1)^n P^{(\beta,\alpha)}_{n}(x)
\end{equation}
which follows directly from the Rodrigues' formula \eqref{eq:Rodrigues}. Hence if we plug \eqref{eq:Jacobi-x} into the Wronskian of \eqref{eq:OmegaTildeProof1}, we get
\begin{equation}\label{eq:OmegaTildeProof2}
	\Omega^{(\alpha,\beta)}_{\lambda,\mu}(-x)
		= (-1)^{\frac{r(r-1)}{2}+\sum\limits_{i=1}^{r_1}n_i + \sum\limits_{i=1}^{r_2} m_i} (1-x)^{(\beta+r_1)r_2} \Wr\left[\tilde{f}_1(x), \ldots, \tilde{f}_r(x) \right]
\end{equation}
where $\tilde{f}_1,\dots,\tilde{f}_r$ are defined in \eqref{eq:ftilde1}-\eqref{eq:ftilde2}. Finally, \eqref{eq:OmegaTildeProof2} simplifies to \eqref{eq:Omega-x}.
\end{remark}

Further on,  we discuss a couple of known results for generalized Jacobi polynomials where we translate them to our partition notation. We denote $\widehat{\Omega}^{(\alpha,\beta)}_{\lambda,\mu}$ as the monic generalized Jacobi polynomial, i.e., $k_{\lambda,\mu} \widehat{\Omega}^{(\alpha,\beta)}_{\lambda,\mu} = \Omega^{(\alpha,\beta)}_{\lambda,\mu}$ where $k_{\lambda,\mu}$ is given by \eqref{eq:LeadingCoeff}. 

\paragraph{Interchanging the partitions.} 
The generalized Jacobi polynomial is defined via two partitions $\lambda$ and $\mu$. When we interchange both partitions, it turns out that it corresponds to changing the sign of the parameter $\beta$ and a possible sign change depending on the length of both partitions.

\begin{lemma}\label{lem:lemma4}
For any $\alpha,\beta\in\mathbb{R}$ and for any partitions $\lambda$ and $\mu$ we have
\begin{equation}\label{eq:duality}
	\Omega^{(\alpha,\beta)}_{\lambda,\mu} = (-1)^{r_1 r_2} \Omega^{(\alpha,-\beta)}_{\mu,\lambda}.
\end{equation}
\end{lemma}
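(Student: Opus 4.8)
The plan is to reduce both sides to Wronskians built from the \emph{same} multiset of $r$ functions, and then to bookkeep only two things: the sign picked up by reordering the arguments of a Wronskian, and the power of $(1+x)$ contributed by the two prefactors.

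First I would recall two elementary and unconditional Wronskian identities: the common-factor rule $\Wr[h g_1,\dots,h g_r]=h^{r}\,\Wr[g_1,\dots,g_r]$ for any sufficiently differentiable $h$, and the fact that permuting the functions $g_1,\dots,g_r$ multiplies $\Wr[g_1,\dots,g_r]$ by the sign of the permutation (it permutes the columns of the defining matrix). Applying the first rule with $h=(1+x)^{\beta}$ to the definition \eqref{eq:OmegaLaMu} gives
\[
\Wr[f_1,\dots,f_r]=(1+x)^{-\beta r}\,\Wr\big[(1+x)^{\beta}P^{(\alpha,\beta)}_{n_1},\dots,(1+x)^{\beta}P^{(\alpha,\beta)}_{n_{r_1}},\;P^{(\alpha,-\beta)}_{m_1},\dots,P^{(\alpha,-\beta)}_{m_{r_2}}\big],
\]
since multiplying the last $r_2$ entries by $(1+x)^{\beta}$ exactly cancels their $(1+x)^{-\beta}$ factors coming from \eqref{eq:fj2}.

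Next I would write out $\Omega^{(\alpha,-\beta)}_{\mu,\lambda}$ directly from the definition: replacing $\beta\mapsto-\beta$ and swapping the roles of $\lambda$ and $\mu$, its first $r_2$ eigenfunctions (of the first type) are $P^{(\alpha,-\beta)}_{m_1},\dots,P^{(\alpha,-\beta)}_{m_{r_2}}$, its last $r_1$ eigenfunctions (of the second type) are $(1+x)^{\beta}P^{(\alpha,\beta)}_{n_1},\dots,(1+x)^{\beta}P^{(\alpha,\beta)}_{n_{r_1}}$, and its prefactor is $(1+x)^{(-\beta+r_2)r_1}$. So the Wronskian inside $\Omega^{(\alpha,-\beta)}_{\mu,\lambda}$ uses exactly the same $r$ functions as in the displayed formula above, only with the block of $r_1$ functions and the block of $r_2$ functions interchanged; the sign of that block transposition is $(-1)^{r_1 r_2}$. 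It then remains to check that the $(1+x)$-exponents match: combining the prefactor $(1+x)^{(\beta+r_1)r_2}$ from \eqref{eq:OmegaLaMu} with the factor $(1+x)^{-\beta r}$ produced above gives exponent $(\beta+r_1)r_2-\beta r=r_1 r_2-\beta r_1=(-\beta+r_2)r_1$, which is precisely the prefactor of $\Omega^{(\alpha,-\beta)}_{\mu,\lambda}$. Assembling the three observations yields $\Omega^{(\alpha,\beta)}_{\lambda,\mu}=(-1)^{r_1 r_2}\Omega^{(\alpha,-\beta)}_{\mu,\lambda}$.

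There is essentially no analytic content, and hence no real obstacle; the only thing to be careful about is the bookkeeping — confirming that interchanging the two blocks of eigenfunctions contributes exactly the sign $(-1)^{r_1 r_2}$ and that the powers of $(1+x)$ balance. Note that, unlike in Lemma \ref{lem:deg}, no hypotheses on $\alpha,\beta$ (such as the nondegeneracy conditions \eqref{Condition2GJP} and \eqref{Condition1GJP}) are needed: the common-factor and permutation rules for Wronskians hold unconditionally, so the identity is valid as stated for all real $\alpha,\beta$ and all partitions $\lambda,\mu$, even when both sides vanish.
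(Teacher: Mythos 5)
Your proposal is correct and follows essentially the same route as the paper's proof: multiply all entries by $(1+x)^{\beta}$ via the common-factor rule $\Wr[hg_1,\dots,hg_r]=h^{r}\Wr[g_1,\dots,g_r]$, swap the two blocks of functions to pick up the sign $(-1)^{r_1r_2}$, and verify that the exponent $(\beta+r_1)r_2-\beta r=(r_2-\beta)r_1$ matches the prefactor of $\Omega^{(\alpha,-\beta)}_{\mu,\lambda}$. Your remark that no conditions on $\alpha,\beta$ are needed is also consistent with the paper's statement.
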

\begin{proof}
We start with the well-known Wronskian property that for sufficiently differentiable functions $g_1,\dots,g_r,h$, we have
\begin{equation}\label{eq:Wr1}
	\Wr[ h \cdot g_1,\dots, h \cdot g_r] 
		= \left(h(x)\right)^{r} \cdot \Wr[g_1, \dots, g_r].
\end{equation}
Using this result for $h(x)=(1+x)^{-\beta}$ and $g_i=f_i$ for $i=1,\dots,r$ where $f_i$ is defined in \eqref{eq:fj1} and \eqref{eq:fj2}, we obtain
\begin{equation*}
	\Omega^{(\alpha,\beta)}_{\lambda,\mu}(x) 
		= (1+x)^{(r_1+\beta)r_2 -\beta r} \Wr \left[(1+x)^{\beta}f_1,\dots,(1+x)^{\beta}f_r\right].
\end{equation*}
The functions in the Wronskian are given by
\begin{align*}
	&(1+x)^{\beta}f_{j}(x) =  (1+x)^{\beta} P^{(\alpha,\beta)}_{n_j}(x), &&j=1,\dots,r_1, \\
	&(1+x)^{\beta}f_{r_1+j}(x) = P^{(\alpha,-\beta)}_{m_j}(x), &&j=1,\dots,r_2.
\end{align*}
Permuting the first $r_1$ functions with the last $r_2$ functions gives 
\begin{equation}\label{eq:proofduality}
	\Omega^{(\alpha,\beta)}_{\lambda,\mu}(x) 
		= (-1)^{r_1r_2}(1+x)^{(r_2-\beta)r_1} \Wr \left[\tilde{f}_1,\dots,\tilde{f}_r\right]
\end{equation}
where
\begin{align*}	
	&\tilde{f}_{j}(x) = P^{(\alpha,-\beta)}_{m_j}(x), &&j=1,\dots,r_2, \\
	&\tilde{f}_{r_2+j}(x) =  (1+x)^{\beta} P^{(\alpha,\beta)}_{n_j}(x), &&j=1,\dots,r_1.
\end{align*}
If we look at \eqref{eq:proofduality}, we see that, up to a possible sign, the right hand side is $\Omega^{(\alpha,-\beta)}_{\mu,\lambda}$ which establish identity \eqref{eq:duality}. 
\end{proof}

\paragraph{The conjugated partitions.}
Every partition $\lambda$ has a conjugated partition $\lambda'$. This partition $\lambda'$ is defined as the weakly decreasing sequence $(\lambda'_i)_{i=1}^{\lambda_1}$ where $\lambda'_i=\#\{j \mid \lambda_j\geq i\}$. One could ask the question what would happen if one replaces $\lambda$ and $\mu$ by its conjugate partition $\lambda'$ and $\mu'$ in \eqref{eq:OmegaLaMu}. Curbera and Dur\'an showed that both monic generalized Jacobi polynomials coincide if we transpose both parameters in a specific way.

\begin{lemma}[Theorem 8.1 in \cite{Curbera_Duran}]\label{lem:ConPar}
For any partitions $\lambda$ and $\mu$, let $\lambda'$ and $\mu'$ denote their conjugated partitions. Set 
\begin{align*}
	&s_1 = \lambda_1+\mu_1+r_1+r_2,
	&&s_2 = \lambda_1-\mu_1+r_1-r_2,
\end{align*}
with the convention that $\lambda_1=0$ if $r_1=0$ and $\mu_1=0$ if $r_2=0$. Take $\alpha,\beta\in\mathbb{R}$ in such a way that the conditions \eqref{Condition2GJP} and \eqref{Condition1GJP} are satisfied in terms of the partitions $\lambda,\mu$ and parameters $\alpha,\beta$, but also in terms of the partitions $\lambda',\mu'$ and parameters $-\alpha-s_1,-\beta-s_2$. 
Then
\begin{equation}\label{eq:ConjugatedPartitions}
	\widehat{\Omega}^{(\alpha,\beta)}_{\lambda,\mu} 
	= \widehat{\Omega}^{(-\alpha-s_1,-\beta-s_2)}_{\lambda',\mu'}.
\end{equation}
\end{lemma}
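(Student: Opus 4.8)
The plan is to reproduce, in the Wronskian language of this section, the argument behind Theorem~8.1 of \cite{Curbera_Duran}. Note first that, by Lemma~\ref{lem:deg} together with the hypotheses on the parameters, both sides of \eqref{eq:ConjugatedPartitions} are monic polynomials of one and the same degree $|\lambda|+|\mu|=|\lambda'|+|\mu'|$; hence it is enough to prove that $\Omega^{(\alpha,\beta)}_{\lambda,\mu}$ and $\Omega^{(-\alpha-s_1,-\beta-s_2)}_{\lambda',\mu'}$ are proportional, and then to fix the constant from the leading--coefficient formula \eqref{eq:LeadingCoeff}. The conceptual reason for the proportionality is the particle--hole (``state--deleting equals state--adding'') symmetry of the Darboux chains producing these polynomials, combined with the invariance of the potential \eqref{eq:Potential} under $(\alpha,\beta)\mapsto(-\alpha,-\beta)$; its computational incarnation is a Wronskian/pseudo--Wronskian equivalence, which I would establish by hand.

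Concretely, consider first the one--partition case $\mu=\emptyset$, where $\Omega^{(\alpha,\beta)}_{\nu,\emptyset}=\Wr[P^{(\alpha,\beta)}_{n_1},\dots,P^{(\alpha,\beta)}_{n_\rho}]$ with $n_i=\nu_i+\rho-i$ the degree sequence of a partition $\nu$ of length $\rho$ and largest part $\nu_1$. Applying the lowering relation \eqref{eq:JacobiDerivative} row by row turns the Wronskian matrix into $\big(c_{i,j}\,P^{(\alpha+i-1,\beta+i-1)}_{n_j-i+1}\big)_{i,j}$; on the other hand, restoring the Jacobi weight and using the Rodrigues--type raising relation $\frac{d}{dx}\big[(1-x)^\alpha(1+x)^\beta P^{(\alpha,\beta)}_n\big]=c_n\,(1-x)^{\alpha-1}(1+x)^{\beta-1}P^{(\alpha-1,\beta-1)}_{n+1}$, which follows at once from \eqref{eq:Rodrigues}, expresses the same determinant, up to an explicit power of $(1-x)(1+x)$ and a numerical constant, in terms of Jacobi polynomials indexed by the \emph{complementary} set inside the $\rho\times\nu_1$ box. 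The elementary combinatorial fact that the reflection $n\mapsto(\nu_1+\rho-1)-n$ maps $\{0,1,\dots,\nu_1+\rho-1\}\setminus\{n_i\}_{i=1}^{\rho}$ bijectively onto $\{\nu'_j+\nu_1-j\}_{j=1}^{\nu_1}$ identifies this complementary index set as the degree sequence $n_{\nu'}$ of the conjugate, and bookkeeping the parameter along the raising steps shows that the conjugate object sits at parameters $(-\alpha-(\nu_1+\rho),\,-\beta-(\nu_1+\rho))$, the shift being the sum of the two side lengths of the box.

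For the full statement I would run the same manipulation on the $r$--function Wronskian \eqref{eq:OmegaLaMu}, whose first $r_1$ entries sit in the $r_1\times\lambda_1$ box (with side--sum $\lambda_1+r_1$) and whose last $r_2$ entries, because of the extra factor $(1+x)^{-\beta}$, sit in the $r_2\times\mu_1$ box (side--sum $\mu_1+r_2$) but feed the $\beta$--parameter with the opposite sign -- this is the same sign flip that drives Lemma~\ref{lem:lemma4}. Adding the two side--sums gives the shift $s_1=\lambda_1+\mu_1+r_1+r_2$ for $\alpha$, while subtracting them (because of the $(1+x)$--twist on the second block) gives $s_2=\lambda_1-\mu_1+r_1-r_2$ for $\beta$. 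Substituting $(\alpha,\beta)\mapsto(-\alpha-s_1,-\beta-s_2)$ and $(\lambda,\mu)\mapsto(\lambda',\mu')$ into \eqref{eq:LeadingCoeff} and matching the Pochhammer symbols, the Vandermonde determinants, the powers of $2$ and the product $\prod(m_j-n_i-\beta)$ on the two sides then promotes proportionality to the claimed identity of monic polynomials; the conditions \eqref{Condition2GJP}--\eqref{Condition1GJP} demanded of \emph{both} parameter choices are exactly what is needed for all of these leading coefficients to be nonzero.

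The main obstacle is the accounting, not the ideas: keeping the Durfee--rectangle combinatorics off--by--one--free (so that the complement really is $n_{\nu'}$ and the shift is $\nu_1+\rho$ and not, say, $\nu_1+\rho-1$), and tracking the cumulative powers of $(1-x)$ and $(1+x)$ and the signs through the lowering/raising steps and across the two blocks of the Wronskian. Once the combinatorial skeleton is pinned down, the differential identities and the leading--coefficient check are routine.
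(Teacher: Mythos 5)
Your overall plan---prove that the two generalized Jacobi polynomials are proportional via a Durfee-rectangle/particle--hole equivalence and then use monicity---is the right circle of ideas, but the step that actually carries the lemma is asserted rather than proved, and the mechanism you describe for it cannot work as stated. Applying the lowering relation \eqref{eq:JacobiDerivative} row by row, and likewise restoring the weight and using the raising relation you quote (which is indeed a consequence of \eqref{eq:Rodrigues}), are manipulations of the entries of one fixed determinant; they never change its size or its index set. The object you must reach, however, namely $\Omega^{(-\alpha-s_1,-\beta-s_2)}_{\lambda',\mu'}$, is a Wronskian of $\lambda_1+\mu_1$ functions, which in general differs from the $r_1+r_2$ functions on the left: already for $\lambda=(2)$, $\mu=\emptyset$ the left-hand side is the single polynomial $P^{(\alpha,\beta)}_{2}$ while the right-hand side is a $2\times 2$ Wronskian. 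So the passage from the degree set $\{n_i\}$ to its complement in the box---the heart of the state-adding/state-deleting duality---needs an ingredient your sketch does not supply: either Jacobi's identity on complementary minors applied to the full box of weighted eigenfunctions, or an induction on one-step shifts of the Maya-diagram origin. Your closing remark that ``the main obstacle is the accounting, not the ideas'' misjudges where the difficulty lies.

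For comparison, the paper avoids complementation arguments entirely: it encodes $\lambda'$ and $\mu'$ through the canonical form \eqref{eq:MayaCanonicalFormB}, so that the conjugate object arises as $\Omega^{(\alpha,\beta)}_{M_1,M_2}$ built from eigenfunctions of types \eqref{app:fj3}--\eqref{app:fj4}; pulling the common factor $(1-x)^{-\alpha}$ out of the Wronskian and applying the duality of Lemma \ref{lem:lemma4} identifies this with $\widehat{\Omega}^{(-\alpha,-\beta)}_{\lambda',\mu'}$, and then Theorem \ref{thm:ShiftMaya}---whose proof is precisely the missing one-step engine, Lemma \ref{lem:MayaShiftOneStep}, iterated $t_1=\lambda_1+r_1$ and $t_2=\mu_1+r_2$ times---turns $\widehat{\Omega}^{(\alpha,\beta)}_{M_1,M_2}$ into $\widehat{\Omega}^{(\alpha-t_1-t_2,\beta-t_1+t_2)}_{\lambda,\mu}$, which is \eqref{eq:ConjugatedPartitions} after renaming the parameters. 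If you wish to keep your ``by hand'' route, you would in effect have to prove Lemma \ref{lem:MayaShiftOneStep} (or the complementary-minor identity) yourself; once that is in place, your bookkeeping of the shifts $s_1,s_2$ is consistent with the paper's, and the final leading-coefficient comparison is not even needed, since proportional monic polynomials are already equal.
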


The proof in \cite{Curbera_Duran} is based on a limit procedure of Casorati determinants of Hahn polynomials. As we discuss a more general setting in Section \ref{sec:ConstructionGJP}, Lemma \ref{lem:ConPar} is a special case of Theorem \ref{thm:ShiftMaya}. So we obtain an alternative proof of the above lemma which is presented in Section \ref{sec:SpecialCase}. 

\paragraph{The orthogonality region $[-1,1]$.}
In the next section we define exceptional Jacobi polynomials. For specific partitions and parameters, these polynomials are orthogonal on the interval $[-1,1]$. Their corresponding weight function consists of the generalized Jacobi polynomial squared in the denominator, see \eqref{eq:Weight}. So the zeros of this polynomial are poles of the weight function. Hence we have to classify when the generalized Jacobi polynomial has no zeros in $(-1,1)$. If $r_1=0$, we set $m_1=0$. Recall that the conditions \eqref{Condition1GJP} are automatically fulfilled when $\beta > m_1$. 

\begin{lemma}\label{lem:OmegaZerosOrth}
For any partitions $\lambda$ and $\mu$, take $\alpha>-1$ and $\beta > m_1$ such that the conditions \eqref{Condition2GJP} are satisfied. If $\lambda$ is an even partition, then $\Omega^{(\alpha,\beta)}_{\lambda,\mu}(x)\neq0$ for all $x\in[-1,1]$.
\end{lemma}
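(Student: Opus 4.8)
The plan is to move to the Schr\"odinger picture \eqref{eq:DVJac2}--\eqref{eq:Potential} and read the claim as a nodeless–Wronskian statement, to which a Krein--Adler type criterion applies. Using the Wronskian identities \eqref{eq:Wr1}, \eqref{eq:Wr2}, the change of variable $x=\cos 2\theta$, and the functions $\varphi$ of Table \ref{tab:2}, one first checks that
\[
\Wr\bigl[\varphi^{(\alpha,\beta)}_{n_1},\dots,\varphi^{(\alpha,\beta)}_{n_{r_1}},\varphi^{(\alpha,-\beta)}_{m_1},\dots,\varphi^{(\alpha,-\beta)}_{m_{r_2}}\bigr](\theta)=c(\theta)\,\Omega^{(\alpha,\beta)}_{\lambda,\mu}(\cos 2\theta),
\]
where the prefactor $c(\theta)$ — a product of powers of $\sin\theta$, $\cos\theta$ and $\sin 2\theta$, times a constant absorbing the factor $(1+x)^{(\beta+r_1)r_2}$ in \eqref{eq:OmegaLaMu} — never vanishes on $(0,\pi/2)$. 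Hence $\Omega^{(\alpha,\beta)}_{\lambda,\mu}$ has no zeros in $(-1,1)$ precisely when the Wronskian on the left is nodeless on $(0,\pi/2)$.

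For the nodeless statement I would argue as follows. Since $\alpha>-1$ and, because $\beta>m_1\ge 0$, also $\alpha+\beta>-1$, the $\varphi^{(\alpha,\beta)}_n$ are the genuine bound states of the Darboux--P\"oschl--Teller Hamiltonian, with $\varphi^{(\alpha,\beta)}_n$ carrying exactly $n$ nodes in $(0,\pi/2)$ and with strictly increasing eigenvalues $(2n+\alpha+\beta+1)^2$. As $\lambda$ is even, the sequence $n_\lambda$ is a disjoint union of blocks of consecutive integers of even length, so it satisfies the Krein--Adler sign condition $\prod_i(\ell-n_i)\ge 0$ for all integers $\ell\ge 0$. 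The remaining seeds $\varphi^{(\alpha,-\beta)}_{m_j}$ are solutions of the same Hamiltonian at the energies $\bigl(2m_j+\alpha-\beta+1\bigr)^2$, which for $\alpha>-1$ and $\beta>m_1\ge m_j$ all lie strictly below the ground–state energy $(\alpha+\beta+1)^2$; moreover each of them is nodeless on $(0,\pi/2)$, because $P^{(\alpha,-\beta)}_{m_j}$ has all of its zeros outside $[-1,1]$ when $\alpha>-1$ and $-\beta<-m_j$ (a classical fact for Jacobi polynomials with parameters outside the orthogonality range, see \cite{Kuijlaars_Martinez_Orive,Szego}). Adjoining to a Krein--Adler family seed functions that are nodeless and lie below the ground state does not introduce nodes — formally their ``indices'' are negative, so every extra factor $(\ell-\hat m_j)>0$ and the sign condition still holds — so the full Wronskian is nodeless on $(0,\pi/2)$ by the (generalized) Krein--Adler criterion, cf.\ the analogous reasoning in \cite{Kuijlaars_Milson,Bonneux_Kuijlaars} and the classification of \cite{GarciaFerrero_GomezUllate_Milson}. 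If one wants this self-contained, the $\mu$-only instance ($\Wr$ of Jacobi polynomials with shifted parameters) can be obtained by inserting the $\varphi^{(\alpha,-\beta)}_{m_j}$ one at a time, using repeatedly that the Wronskian of two solutions at distinct energies below the ground state, each nodeless and with the endpoint behaviour that $\beta>m_1$ guarantees throughout the iteration, is again nodeless (its derivative is sign-definite, so it is monotone, running from $0$ at $\theta=0$ to $\pm\infty$ at $\theta=\pi/2$).

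It remains to exclude the endpoints $x=\pm1$. By Lemma \ref{lem:deg}, $\Omega^{(\alpha,\beta)}_{\lambda,\mu}$ is a polynomial, hence finite at $\pm1$, and its value there can be read off from \eqref{eq:JacobiDerivative}: at $x=1$ the columns of the Wronskian matrix coming from $\lambda$ are ``triangular'', since $\tfrac{d^k}{dx^k}P^{(\alpha,\beta)}_{n_j}\big|_{x=1}=2^{-k}(n_j+\alpha+\beta+1)_k\binom{n_j+\alpha}{n_j-k}$ is nonzero for $k\le n_j$ and zero for $k>n_j$ by \eqref{Condition2GJP} and $\alpha>-1$, whereas the columns coming from $\mu$ have all entries nonzero (here one uses $\beta>m_1$ to get $(m_j-\beta-k+1)_k\neq0$ and $\alpha>-1$ to get $P^{(\alpha+k,-\beta-k)}_{m_j}(1)\neq0$); a short determinant computation, again invoking \eqref{Condition2GJP}, shows the matrix is nonsingular, and the case $x=-1$ follows from the symmetry \eqref{eq:Omega-x} applied to $\widetilde\Omega^{(\beta,\alpha)}_{\lambda,\mu}$ at $x=1$. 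The main obstacle is the middle step — controlling the interplay between the even set $n_\lambda$ and the low-energy seeds $\varphi^{(\alpha,-\beta)}_{m_j}$ — and this is exactly where the hypothesis $\beta>m_1$ enters: it keeps every $\varphi^{(\alpha,-\beta)}_{m_j}$ nodeless and below the ground state, so that no evenness of $\mu$ is required. Alternatively, this lemma is due to Dur\'an and can be deduced from the determinantal description in Remark \ref{rem:DefinitionDuran} together with \cite{Duran-b}.
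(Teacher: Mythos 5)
First, a point of comparison: the paper does not actually prove Lemma \ref{lem:OmegaZerosOrth}; it is presented as a specialization (with stronger, partition-friendly hypotheses) of Dur\'an's results in \cite{Duran-b}, which were obtained through a limit procedure of Casorati determinants, and the discussion immediately after the lemma explicitly flags the Krein--Adler route via \cite{GarciaFerrero_GomezUllate} as a possibility suggested by a referee that is \emph{not} pursued in the paper. Your proposal is exactly that deferred route, so it is a genuinely different approach from the paper's citation-based one. Its first step is sound and matches a computation that does appear later in the paper (in the proof of Lemma \ref{lem:Residue}): the Wronskian of the $\varphi$'s equals a nonvanishing prefactor times $\Omega^{(\alpha,\beta)}_{\lambda,\mu}(\cos 2\theta)$, so nodelessness of $\Omega^{(\alpha,\beta)}_{\lambda,\mu}$ on $(-1,1)$ is equivalent to nodelessness of that Wronskian on $(0,\pi/2)$. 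The facts you cite about the individual seeds are also correct under $\alpha>-1$, $\beta>m_1$: the energies $(2m_j+\alpha-\beta+1)^2$ lie strictly below $(\alpha+\beta+1)^2$, and $P^{(\alpha,-\beta)}_{m_j}$ has no zeros in $[-1,1]$.

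However, the decisive middle step is asserted, not proved. The Krein--Adler theorem and the sign condition $\prod_i(\ell-n_i)\ge 0$ concern Wronskians of \emph{bound states} of a single Hamiltonian; your seed set mixes the bound states $\varphi^{(\alpha,\beta)}_{n_i}$ with the non-normalizable, below-ground-state solutions $\varphi^{(\alpha,-\beta)}_{m_j}$, and the statement that adjoining such seeds ``does not introduce nodes, because formally their indices are negative'' is a heuristic: there is no integer index attached to a non-bound-state solution that lets you reuse the Krein--Adler counting verbatim. What the argument really needs is the oscillation theorem for Wronskians of arbitrary sequences of eigenfunctions of \cite{GarciaFerrero_GomezUllate}, and to invoke it one must verify its hypotheses on the behavior of every seed at both endpoints $\theta=0,\pi/2$ (this is exactly where $\alpha>-1$ and $\beta>m_1$ enter, and where one must check that the shifted parameters $(\alpha+k,-\beta-k)$ arising in the iteration keep the transformed seeds nodeless and of the correct boundary type), and then carry out the zero count and show it yields zero interior zeros for an even $\lambda$ together with an arbitrary $\mu$. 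Your ``self-contained'' iteration only sketches the case $r_1=0$, and the persistence of the endpoint behavior ``throughout the iteration'' is precisely the point that needs proof; so the key nodelessness claim remains a gap. A minor repair: for the endpoints $x=\pm1$ the sketched determinant computation is also incomplete (nonzero entries do not imply a nonzero determinant), but with $\alpha>-1$ and $\beta>m_1$ the hypotheses of Lemma \ref{lem:Omega+-1} hold, so $\Omega^{(\alpha,\beta)}_{\lambda,\mu}(\pm1)\neq0$ follows directly by citing that lemma.
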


The conditions in Lemma \ref{lem:OmegaZerosOrth} can be relaxed, as discussed in \cite{Duran-b}. In that paper, necessary conditions are derived. They seem necessary too, however, a technical extra assumption is needed, see \cite[Theorem 6.5]{Duran-b}. As these conditions are rather complex, we specified to stronger assumptions in Lemma \ref{lem:OmegaZerosOrth} which are easy to state in terms of partitions. 

The conditions in \cite{Duran-b} are obtained via a limit procedure. As suggested by a referee, it might be possible to rediscover these results using an extension of the Krein-Adler theorem, see for example \cite{GarciaFerrero_GomezUllate}, so that the extra technical assumption can be avoided. We omit further research in this paper.

\paragraph{Value at the endpoints of the orthogonality region.}
In Corollary \ref{cor:XJPRegularZeros}, we need the conditions $\Omega^{(\alpha,\beta)}_{\lambda,\mu}(\pm1) \neq 0$, i.e., the generalized Jacobi polynomial does not vanish at the endpoints of the orthogonality region. An explicit expression for these values can be found in \cite[Lemma 5.1]{Duran-b}. From this result, we easily derive when these values are non-zero. We set $n_1=0$ if $r_1=0$ and similarly $m_1=0$ if $r_2=0$.

\begin{lemma}\label{lem:Omega+-1}
For any partition $\lambda$ and $\mu$, take $\alpha,\beta\in\mathbb{R}$ such that the conditions \eqref{Condition2GJP} and \eqref{Condition1GJP} are satisfied. 
\begin{itemize}
\item[(a)]
If $\alpha \notin \{ -1,-2,\dots,-\max\{n_1,m_1\}\}$ and
\begin{align*}
	&\alpha \neq -n_i-m_j-1,	&& i=1,\dots,r_1 \text{ and } j=1,\dots,r_2,
\end{align*}
then $\Omega^{(\alpha,\beta)}_{\lambda,\mu}(1) \neq 0$.

\item[(b)]
If $\beta \notin \{-n_1,-n_1+1,\dots,-1,0,1,\dots,m_1-1,m_1\}$, then $\Omega^{(\alpha,\beta)}_{\lambda,\mu}(-1) \neq 0$. \\
Moreover, if $r_1=0$ or $r_2=0$, then $\Omega^{(\alpha,\beta)}_{\lambda,\mu}(-1) \neq 0$ for $\beta=0$.
\end{itemize}
\end{lemma}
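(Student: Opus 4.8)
The plan is to reduce everything to the explicit evaluation of $\Omega^{(\alpha,\beta)}_{\lambda,\mu}$ at $x=\pm 1$ provided by \cite[Lemma 5.1]{Duran-b}, and then simply read off when that expression is nonzero. First I would recall the structure of the argument behind such endpoint formulas: at $x=1$ the prefactor $(1+x)^{(\beta+r_1)r_2}$ in \eqref{eq:OmegaLaMu} is harmless (it equals $2^{(\beta+r_1)r_2}$), so $\Omega^{(\alpha,\beta)}_{\lambda,\mu}(1)$ is a nonzero constant times $\Wr[f_1,\dots,f_r](1)$, and the latter Wronskian can be evaluated using the known values $P^{(\alpha,\beta)}_n(1)=\binom{n+\alpha}{n}$ together with the derivative formula \eqref{eq:JacobiDerivative}, which lowers both the degree and raises $\alpha$ by the order of differentiation. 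Carrying this through, $\Omega^{(\alpha,\beta)}_{\lambda,\mu}(1)$ becomes a product of Gamma/Pochhammer factors in $\alpha$ (one family $(\alpha+\cdot)$ coming from the first block, another from the mixed block contributing factors of the form $\alpha+n_i+m_j+1$) times Vandermonde-type determinants in the $n_i$ and $m_j$ which are automatically nonzero since all $n_i$ are distinct and all $m_j$ are distinct. Part~(a) then follows: the Pochhammer factors from the first block vanish exactly when $\alpha\in\{-1,-2,\dots,-\max\{n_1,m_1\}\}$, and the cross factors vanish exactly when $\alpha=-n_i-m_j-1$ for some $i,j$; away from these values every factor is nonzero, hence so is $\Omega^{(\alpha,\beta)}_{\lambda,\mu}(1)$.

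For part~(b), the situation at $x=-1$ is more delicate because there the prefactor $(1+x)^{(\beta+r_1)r_2}$ vanishes (for generic $\beta$), and the factor $(1+x)^{-\beta}$ inside \eqref{eq:fj2} is singular, so the value of $\Omega^{(\alpha,\beta)}_{\lambda,\mu}(-1)$ is really the value of a delicate limit; nonetheless \cite[Lemma 5.1]{Duran-b} gives a closed form. The cleanest route here is to invoke Lemma~\ref{lem:lemma4}: since $\Omega^{(\alpha,\beta)}_{\lambda,\mu}=(-1)^{r_1r_2}\Omega^{(\alpha,-\beta)}_{\mu,\lambda}$, the evaluation at $-1$ can alternatively be related, via \eqref{eq:Omega-x} and \eqref{eq:Jacobi-x}, to an evaluation at $+1$ of a polynomial with roles of the two partitions and of $\beta$ swapped — but in fact the most transparent thing is just to quote Dur\'an's $x=-1$ formula directly and extract its zero set. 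That zero set is governed by a single family of Gamma factors in $\beta$ with arguments running over $\{-n_1,-n_1+1,\dots,m_1\}$ (roughly, $\beta+n_1,\beta+n_1-1,\dots,\beta-m_1$ and $\beta$ itself), together with Vandermonde determinants that are again nonzero by distinctness of the $n_i$'s and the $m_j$'s; the stated condition $\beta\notin\{-n_1,-n_1+1,\dots,-1,0,1,\dots,m_1-1,m_1\}$ is precisely the complement of that zero set, so $\Omega^{(\alpha,\beta)}_{\lambda,\mu}(-1)\neq 0$.

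The only subtle point is the final sentence of~(b): when $r_1=0$ or $r_2=0$ one extra value, $\beta=0$, is also allowed. When $r_2=0$ there is no second block at all, $\Omega^{(\alpha,\beta)}_{\lambda,\emptyset}$ is (up to normalization) a Wronskian of honest Jacobi polynomials with the trivial prefactor $(1+x)^0=1$, so its value at $-1$ is a Wronskian of the quantities $P^{(\alpha,\beta)}_{n_i}(-1)=(-1)^{n_i}\binom{n_i+\beta}{n_i}$ and their derivatives, and a short computation shows the factor that would otherwise vanish at $\beta=0$ is simply absent — i.e. the ``$\beta$ itself'' factor in the zero set comes entirely from the mixed block and disappears when $r_2=0$. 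When $r_1=0$, apply Lemma~\ref{lem:lemma4} to reduce to the $r_2=0$ case with $\beta\mapsto-\beta$, and since $\beta=0$ is fixed under this sign change the same conclusion holds. I expect this bookkeeping of exactly which Gamma factors survive in the $r_1=0$ or $r_2=0$ degenerate cases — and checking that no spurious zero is introduced by the limit defining $\Omega^{(\alpha,\beta)}_{\lambda,\mu}(-1)$ — to be the main (though still routine) obstacle; everything else is direct substitution into \cite[Lemma 5.1]{Duran-b}.
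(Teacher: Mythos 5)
Your proposal is correct and follows essentially the same route as the paper, which gives no separate proof beyond citing the explicit evaluations of $\Omega^{(\alpha,\beta)}_{\lambda,\mu}(\pm1)$ in \cite[Lemma 5.1]{Duran-b} and reading off when they are non-zero. Your extra bookkeeping — the Pochhammer/cross factors in $\alpha$ at $x=1$, the factor of $\beta$ coming from the mixed block at $x=-1$, and the reduction of the $r_1=0$ case to $r_2=0$ via Lemma \ref{lem:lemma4} — is consistent with what Dur\'an's formulas yield (only note that the stated conditions are merely sufficient, as the paper remarks, so you should not claim they are exactly the complement of the zero set).
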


The conditions in Lemma \ref{lem:Omega+-1} are sufficient but it is possible to improve the set of forbidden values for $\alpha$ and $\beta$. We omit further specification.

\subsection{Exceptional Jacobi polynomials}\label{sec:XJPDef}
The exceptional Jacobi polynomial is defined similar to the generalized Jacobi polynomial \eqref{eq:OmegaLaMu}. The only difference is that we add a Jacobi polynomial in the Wronskian of \eqref{eq:OmegaLaMu} and change the exponent of the prefactor such that we again end up with a polynomial.

Fix the partitions $\lambda$ and $\mu$ and let $n_{\lambda}$ and $n_{\mu}$ denote their corresponding sequences. Let $r_1$ (respectively $r_2$) be the length of the partition $\lambda$ (respectively $\mu$). Set $r=r_1+r_2$ and suppose that the functions $f_1,\dots,f_r$ are as in \eqref{eq:fj1} and \eqref{eq:fj2}. Finally, fix the parameters $\alpha$ and $\beta$ and consider the function
\begin{equation} \label{eq:XJPexample}
	(1+x)^{(\beta+r_1+1)r_2} \cdot \Wr \left[f_1, \ldots, f_r, P_s^{(\alpha,\beta)} \right], 
	\qquad s\geq 0.
\end{equation}
Then, up to a possible sign change, this polynomial is the generalized Jacobi polynomial for the partitions $\tilde{\lambda}$ and $\mu$. Here we denote $\tilde{\lambda}$ as the partition where the corresponding sequence consists of the elements $n_1,\dots,n_{r_1},s$. The idea is to vary $s$ in such a way that the above polynomial \eqref{eq:XJPexample} exists. That is, $s\geq0$ and $s\neq n_j$ for $j=1,\dots,r_1$. The obtained polynomials will be the exceptional Jacobi polynomials corresponding to the partitions $\lambda$ and $\mu$. For future purposes, we set $s = n - |\lambda|- |\mu| + r_1$. Hence we can vary $n$ instead of $s$. We obtain the following conditions for $n$.

\begin{definition}\label{def:Nlamu}
The degree sequence associated with partitions $\lambda$ and $\mu$ is
\begin{equation} \label{eq:Nlamu} 
	\mathbb{N}_{\lambda, \mu}
		:= \{ n \in \mathbb{N}_0 | n\geq |\lambda|+|\mu| - r_1 \text{ and } n - |\lambda|-|\mu| \neq \lambda_j-j \text{ for } j=1,\dots,r_1 \}.
\end{equation}
\end{definition}

The first condition of \eqref{eq:Nlamu} ensures that $s$ is a non-negative integer while the other conditions give that $s\neq n_j$ for $j=1,\dots,r_1$. Now we are able to define the exceptional Jacobi polynomials.

\begin{definition}
The exceptional Jacobi polynomials of parameters $\alpha$ and $\beta$ associated with partitions $\lambda$ and $\mu$ are given by
\begin{equation}\label{def:XJP1}
	P^{(\alpha,\beta)}_{\lambda,\mu,n}(x) 
		:=  (1+x)^{(\beta+r_1+1)r_2} \cdot \Wr \left[ f_1, \ldots, f_r, P^{(\alpha,\beta)}_s \right], 
	\qquad n \in \mathbb N_{\lambda, \mu},
\end{equation}
where $s = n - |\lambda|- |\mu| + r_1$ and $f_1, \ldots, f_r$ are as in \eqref{eq:fj1} and \eqref{eq:fj2}.
\end{definition}

\begin{remark}
If both partitions are empty, the degree sequence, defined in \eqref{eq:Nlamu}, equals the non-negative integers and the exceptional Jacobi polynomials simplify to the (classical) Jacobi polynomials, i.e., $P^{(\alpha,\beta)}_{\emptyset,\emptyset,n}\equiv P^{(\alpha,\beta)}_{n}$. 
\end{remark}

As for the generalized Jacobi polynomial, we put conditions for the parameters $\alpha$ and $\beta$ such that we can define the degree of the exceptional Jacobi polynomial properly. Recall that the exceptional Jacobi polynomial is a specific generalized Jacobi polynomial, hence the conditions \eqref{Condition2GJP} and \eqref{Condition1GJP} transfer to similar conditions for the exceptional Jacobi polynomials. Fix $n\in\mathbb{N}_{\lambda,\mu}$ and set $s = n - |\lambda|- |\mu| + r_1$.

\begin{itemize}
	\item[1.] \textbf{No degree reduction} \\
	We exclude degree reduction for the Jacobi polynomials in the Wronskian of \eqref{def:XJP1}. Hence, the conditions
	\begin{equation}\label{Condition2XJP} 
	\begin{aligned}
			&\alpha+\beta+n_i \notin \{-1,-2,\dots,-n_i\},	&&\qquad i=1,\dots,r_1,
			\\
			&\alpha+\beta+s   \notin \{-1,-2,\dots, -s \}, 
			\\
			&\alpha-\beta+m_i \notin \{-1,-2,\dots,-m_i\},	&&\qquad i=1,\dots,r_2.
	\end{aligned}
	\end{equation}
	are sufficient to obtain that the degree of all Jacobi polynomials is indicated by their subindex. The value of $\alpha+\beta+s$ is always positive for sufficiently large values of $n$. Hence, if $n$ tends to infinity, then the set of all conditions in \eqref{Condition2XJP} reduces to \eqref{Condition2GJP}. For later purposes, see Lemma \ref{lem:JacobiShiftParameter} and Lemma \ref{lem:XJPlincomJac} below, we extend this set of conditions to 
	\begin{equation}\label{Condition2XJPbis} 
		\begin{aligned}
			&\alpha+\beta+n_i \notin \{-1,-2,\dots,-n_i\},	&&\qquad i=1,\dots,r_1,
			\\
			&\alpha+\beta+s   \notin \{-1,-2,\dots, -s-2r \}, 
			\\
			&\alpha-\beta+m_i \notin \{-1,-2,\dots,-m_i\},	&&\qquad i=1,\dots,r_2.
		\end{aligned}
	\end{equation}
	
	\item[2.] \textbf{Independent eigenfunctions} \\
	To obtain that the Wronskian in \eqref{def:XJP1} would not vanish because of linearly dependent functions, we have the following necessary and sufficient conditions,
	\begin{equation}\label{Condition1XJP}
	\begin{aligned} 
			&\beta \neq m_j-n_i, \qquad &&i=1,\dots,r_1 \text{ and } j=1,\dots,r_2, \\
			&\beta \neq m_j-s,   \qquad &&j=1,\dots,r_2,
	\end{aligned} 
	\end{equation}
	where we assume the conditions \eqref{Condition2XJP} to be satisfied. The second set of conditions in \eqref{Condition1XJP} is satisfied when $n$ is sufficiently large, then the conditions \eqref{Condition1XJP} reduce to \eqref{Condition1GJP}.
\end{itemize}

Assuming the above conditions for the parameters, we apply Lemma \ref{lem:deg} to determine the degree of the polynomial in \eqref{def:XJP1}.

\begin{lemma}\label{lem:XJPdeg}
For any partitions $\lambda$ and $\mu$, take $\alpha,\beta\in\mathbb{R}$ such that the conditions \eqref{Condition2XJP} and \eqref{Condition1XJP} are satisfied. Then we have $\deg P^{(\alpha,\beta)}_{\lambda,\mu,n} = n$ for every $n\in\mathbb{N}_{\lambda,\mu}$.
\end{lemma}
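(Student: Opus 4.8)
The plan is to recognize the exceptional Jacobi polynomial $P^{(\alpha,\beta)}_{\lambda,\mu,n}$ as a particular generalized Jacobi polynomial and then invoke Lemma \ref{lem:deg} for its degree. Concretely, set $s = n - |\lambda| - |\mu| + r_1$, which is a non-negative integer with $s \neq n_j$ for all $j=1,\dots,r_1$ precisely because $n \in \mathbb{N}_{\lambda,\mu}$. Let $\tilde{\lambda}$ be the partition whose associated strictly decreasing sequence $n_{\tilde\lambda}$ consists of the reordering of $n_1, \dots, n_{r_1}, s$ (these are $r_1+1$ pairwise distinct non-negative integers, so they determine a unique partition of length $r_1+1$). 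Then the Wronskian $\Wr[f_1,\dots,f_r,P^{(\alpha,\beta)}_s]$ in \eqref{def:XJP1} is, up to the sign of the permutation sorting $n_1,\dots,n_{r_1},s$ into decreasing order, exactly $\Wr[\tilde f_1,\dots,\tilde f_{r_1+1}, f_{r_1+1},\dots,f_r]$ where $\tilde f_1,\dots,\tilde f_{r_1+1}$ are the type-\eqref{eq:fj1} eigenfunctions $P^{(\alpha,\beta)}_{(n_{\tilde\lambda})_i}$ and $f_{r_1+1},\dots,f_r$ are the type-\eqref{eq:fj2} eigenfunctions for $\mu$. The prefactor $(1+x)^{(\beta+r_1+1)r_2}$ in \eqref{def:XJP1} matches the prefactor $(1+x)^{(\beta+\tilde r_1)r_2}$ from \eqref{eq:OmegaLaMu} with $\tilde r_1 = r_1+1$, so up to sign $P^{(\alpha,\beta)}_{\lambda,\mu,n} = \pm\, \Omega^{(\alpha,\beta)}_{\tilde\lambda,\mu}$.

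The next step is to check that the hypotheses of Lemma \ref{lem:deg} hold for the pair $(\tilde\lambda,\mu)$ with the same parameters $\alpha,\beta$. The no-degree-reduction conditions \eqref{Condition2GJP} for $\tilde\lambda$ read $\alpha+\beta+\nu \notin \{-1,\dots,-\nu\}$ for each $\nu \in \{n_1,\dots,n_{r_1},s\}$, which is exactly \eqref{Condition2XJP}; likewise the conditions for $\mu$ coincide. The independence conditions \eqref{Condition1GJP} for $(\tilde\lambda,\mu)$ read $\beta \neq m_j - \nu$ for $\nu$ ranging over $\{n_1,\dots,n_{r_1},s\}$ and $j=1,\dots,r_2$, which is exactly \eqref{Condition1XJP}. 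So Lemma \ref{lem:deg} applies verbatim and gives that $\Omega^{(\alpha,\beta)}_{\tilde\lambda,\mu}$ is a polynomial of degree $|\tilde\lambda| + |\mu|$. Finally, compute $|\tilde\lambda|$: since $n_{\tilde\lambda}$ has length $r_1+1$ and $\sum_i (n_{\tilde\lambda})_i = |\tilde\lambda| + \tfrac{(r_1+1)r_1}{2}$, while $\sum_i (n_{\tilde\lambda})_i = \big(\sum_{i=1}^{r_1} n_i\big) + s = |\lambda| + \tfrac{r_1(r_1-1)}{2} + s$, we get $|\tilde\lambda| = |\lambda| + s - r_1 = |\lambda| + (n - |\lambda| - |\mu| + r_1) - r_1 = n - |\mu|$. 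Hence $\deg P^{(\alpha,\beta)}_{\lambda,\mu,n} = |\tilde\lambda| + |\mu| = n$, as claimed.

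The only genuinely delicate point is the bookkeeping in the first paragraph: one must verify that appending $P^{(\alpha,\beta)}_s$ as the last column of the Wronskian and then sorting the type-\eqref{eq:fj1} arguments into decreasing order produces an honest generalized Jacobi polynomial $\Omega^{(\alpha,\beta)}_{\tilde\lambda,\mu}$ rather than something only superficially similar — in particular that the exponent $(\beta+r_1+1)r_2$ in the definition of the exceptional polynomial is precisely the exponent $(\beta+\tilde r_1)r_2$ demanded by \eqref{eq:OmegaLaMu} for the enlarged first partition, and that the sign introduced by the column permutation is a nonzero constant and therefore irrelevant to the degree. Since column permutations only change the Wronskian by $\pm 1$ and the leading coefficient in \eqref{eq:LeadingCoeff} is nonzero under \eqref{Condition2XJP}–\eqref{Condition1XJP} (every factor $(n_i+\alpha+\beta+1)_{n_i}$, $(s+\alpha+\beta+1)_s$, $(m_j+\alpha-\beta+1)_{m_j}$ is nonzero by the no-degree-reduction conditions, the Vandermonde factors are nonzero because the $n_{\tilde\lambda}$ and $n_\mu$ entries are distinct, and each $m_j - \nu - \beta \neq 0$ by the independence conditions), the degree statement is unaffected. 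This makes the lemma an immediate corollary of Lemma \ref{lem:deg}.
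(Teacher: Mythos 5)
Your proposal is correct and follows essentially the same route as the paper: the paper also identifies $P^{(\alpha,\beta)}_{\lambda,\mu,n}$ (up to sign) with the generalized Jacobi polynomial $\Omega^{(\alpha,\beta)}_{\tilde\lambda,\mu}$ for the enlarged partition $\tilde\lambda$ built from $n_1,\dots,n_{r_1},s$, notes that \eqref{Condition2XJP}--\eqref{Condition1XJP} are exactly \eqref{Condition2GJP}--\eqref{Condition1GJP} for $(\tilde\lambda,\mu)$, and then applies Lemma \ref{lem:deg}. Your explicit bookkeeping of the prefactor exponent, the permutation sign, and the computation $|\tilde\lambda|+|\mu|=n$ fills in details the paper leaves implicit, but the argument is the same.
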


This lemma tells us that under specified conditions, the exceptional Jacobi polynomial of degree $n$ exists for all non-negative integers $n$ except for finitely many. There are $|\lambda|+|\mu|$ exceptional degrees, the degrees which are not reached. Stated otherwise, we have a set of polynomials for a cofinite number of degrees. The leading coefficient of $P^{(\alpha,\beta)}_{\lambda,\mu,n}$ can be determined as in Lemma \ref{lem:deg}.

\begin{remark}
The main ingredient to obtain the exceptional Jacobi polynomial \eqref{def:XJP1} is to add an eigenfunction in the Wronskian of \eqref{eq:OmegaLaMu}. However, one could also define the exceptional polynomial by adding the other type of eigenfunction into the Wronskian, we would have
\begin{equation}\label{def:XJP2}
	\widetilde{P}^{(\alpha,\beta)}_{\lambda,\mu,n}(x) 
		=(1+x)^{(\beta+r_1)(r_2+1)} \cdot \Wr\left[f_1, \ldots, f_r, (1+x)^{-\beta} P^{(\alpha,-\beta)}_s \right]
\end{equation}
where we take $s = n - |\lambda| - |\mu| + r_2 \geq 0$. This polynomial is comparable to our definition in \eqref{def:XJP1} as by the duality concept stated in Lemma \ref{lem:lemma4}, we get 
\begin{equation*}
	\widetilde{P}^{(\alpha,\beta)}_{\lambda,\mu,n}
	= (-1)^{r_1 r_2} P^{(\alpha,-\beta)}_{\mu,\lambda,n}, 
	\qquad
	n\in\mathbb{N}_{\mu,\lambda}.
\end{equation*}
Hence, our definition \eqref{def:XJP1} also captures this situation. More general details concerning this aspect are discussed in Section \ref{sec:XJPrevisited}.
\end{remark}

To end this section, we elaborate on the orthogonality and completeness of these polynomials in terms of partitions. A far more detailed study about this topic is presented in \cite{Duran-b}.

\paragraph{Orthogonality and completeness.}
Jacobi polynomials form a complete set of orthogonal polynomials on the interval $[-1,1]$ when $\alpha>-1$ and $\beta>-1$, see \eqref{eq:JacobiOrthogonal}. Dur\'an derived a similar result for exceptional Jacobi polynomials \cite{Duran-b}. He showed that under explicit conditions, the exceptional Jacobi polynomials are orthogonal for the weight function
\begin{equation}\label{eq:Weight}
	W^{(\alpha,\beta)}_{\lambda, \mu}(x) = \frac{(1-x)^{\alpha+r_1+r_2}(1+x)^{\beta+r_1-r_2}}
		{\left(\Omega^{(\alpha,\beta)}_{\lambda, \mu}(x) \right)^2}, \qquad x\in(-1,1).
\end{equation}
We choose to present stronger assumptions than Dur\'an derived. Moreover, let $m_1=0$ if $r_2=0$, and observe that the conditions \eqref{Condition1XJP} are automatically fulfilled when $\beta > m_1$.

\begin{lemma}\label{lem:XJPorth}
For any partitions $\lambda$ and $\mu$, take $\alpha>-1$ and $\beta > m_1$ such that the conditions \eqref{Condition2XJP} are satisfied for all $n\in\mathbb{N_{\lambda,\mu}}$. If $\lambda$ is an even partition, then the polynomials $P^{(\alpha,\beta)}_{\lambda,\mu,n}$ for $n \in \mathbb N_{\lambda, \mu}$ are orthogonal on $[-1,1]$ with respect to the positive weight function $W^{(\alpha,\beta)}_{\lambda, \mu}$ as defined in \eqref{eq:Weight}. That is, if $n, m \in \mathbb N_{\lambda, \mu}$, then
\begin{equation*}
	\int_{-1}^{1} P^{(\alpha,\beta)}_{\lambda, \mu, n}(x) P^{(\alpha,\beta)}_{\lambda, \mu, m}(x) 
		W^{(\alpha,\beta)}_{\lambda, \mu}(x) dx = 0, \qquad n\neq m.
\end{equation*}
Moreover, they form a complete set in $L^2\left([-1,1],W^{(\alpha,\beta)}_{\lambda, \mu}dx\right)$.
\end{lemma}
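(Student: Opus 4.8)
The plan is to realise $P^{(\alpha,\beta)}_{\lambda,\mu,n}$ as the polynomial eigenfunctions of a second-order operator obtained from the Jacobi operator by a Darboux--Crum transformation, to deduce the orthogonality from the formal self-adjointness of that operator with respect to $W^{(\alpha,\beta)}_{\lambda,\mu}$, and to deduce completeness by transporting the classical Jacobi completeness through the associated intertwining operator. One first checks that $W^{(\alpha,\beta)}_{\lambda,\mu}$ is an admissible weight: under the hypotheses Lemma~\ref{lem:OmegaZerosOrth} gives $\Omega^{(\alpha,\beta)}_{\lambda,\mu}(x)\neq0$ on $[-1,1]$, so by \eqref{eq:Weight} $W^{(\alpha,\beta)}_{\lambda,\mu}$ is real-analytic and strictly positive on $(-1,1)$ with no endpoint singularity beyond the explicit powers $(1-x)^{\alpha+r_1+r_2}$ and $(1+x)^{\beta+r_1-r_2}$. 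Since $\alpha>-1$ forces $\alpha+r_1+r_2>-1$, and $\beta>m_1$ forces $\beta+r_1-r_2>-1$ (if $r_2\geq1$ then $m_1=\mu_1+r_2-1\geq r_2$; if $r_2=0$ then $m_1=0$ and $\beta>0$), the weight is integrable, so $L^2((-1,1),W^{(\alpha,\beta)}_{\lambda,\mu}\,dx)$ is a Hilbert space containing every polynomial, and each $P^{(\alpha,\beta)}_{\lambda,\mu,n}$ is a genuine degree-$n$ polynomial by Lemma~\ref{lem:XJPdeg}.

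For the orthogonality I would pass to the Schr\"odinger picture. The substitution $x=\cos(2\theta)$ and conjugation by $\sin(\theta)^{\alpha+\frac{1}{2}}\cos(\theta)^{\beta+\frac{1}{2}}$ turn the operator \eqref{eq:DVJac1} into $H_0=-\frac{d^2}{d\theta^2}+V$ with $V$ as in \eqref{eq:Potential} on $(0,\pi/2)$, and the seeds $f_1,\dots,f_r$ of \eqref{eq:fj1}--\eqref{eq:fj2} into the eigenfunctions $\varphi^{(\alpha,\beta)}_{n_i}$, $\varphi^{(\alpha,-\beta)}_{m_j}$ of $H_0$. The Darboux--Crum transform with these $r$ seeds yields $H_{\lambda,\mu}=-\frac{d^2}{d\theta^2}+V-2(\log\mathcal{W})''$, where $\mathcal{W}$ is the Wronskian of the seeds; up to a constant $\mathcal{W}$ equals a smooth, non-vanishing factor on $(0,\pi/2)$ times $\Omega^{(\alpha,\beta)}_{\lambda,\mu}(\cos2\theta)$, which by Lemma~\ref{lem:OmegaZerosOrth} is zero-free on $(0,\pi/2)$, so $H_{\lambda,\mu}$ is non-singular in the interior and near the endpoints is again of Darboux--P\"oschl--Teller type with parameters shifted to $\alpha+r_1+r_2$, $\beta+r_1-r_2$. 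Undoing the conjugation gives a second-order operator $T_{\lambda,\mu}$, formally self-adjoint on $L^2((-1,1),W^{(\alpha,\beta)}_{\lambda,\mu}\,dx)$, intertwined with the Jacobi operator by the order-$r$ differential operator $\mathcal{A}g=(1+x)^{r_2}\Omega^{(\alpha,\beta)}_{\lambda,\mu}\cdot\big(\Wr[f_1,\dots,f_r,g]/\Wr[f_1,\dots,f_r]\big)$, for which $\mathcal{A}P^{(\alpha,\beta)}_s=P^{(\alpha,\beta)}_{\lambda,\mu,n}$. Hence $T_{\lambda,\mu}P^{(\alpha,\beta)}_{\lambda,\mu,n}=\Lambda_nP^{(\alpha,\beta)}_{\lambda,\mu,n}$ with $\Lambda_n=4\big(s+\tfrac{1}{2}(\alpha+\beta+1)\big)^2$, $s=n-|\lambda|-|\mu|+r_1$; since $\alpha+\beta+1>-1$ the map $s\mapsto\Lambda_n$ is injective on $\mathbb{N}_0$, so the $\Lambda_n$ are pairwise distinct for $n\in\mathbb{N}_{\lambda,\mu}$. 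The boundary terms in Green's identity for $T_{\lambda,\mu}$ vanish at $\pm1$ precisely because the shifted exponents exceed $-1$, so $(\Lambda_n-\Lambda_m)\int_{-1}^{1}P^{(\alpha,\beta)}_{\lambda,\mu,n}P^{(\alpha,\beta)}_{\lambda,\mu,m}W^{(\alpha,\beta)}_{\lambda,\mu}\,dx=0$, giving the orthogonality.

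For completeness the evenness of $\lambda$ is decisive. Because $\lambda_{2i-1}=\lambda_{2i}$ one has $n_{2i-1}=n_{2i}+1$, so the type-$1$ seeds occur in consecutive pairs and the Darboux--Crum step is of Krein--Adler type: it creates no new bound state and deletes only finitely many levels of $H_0$, the deleted indices being exactly the finite complement of $\mathbb{N}_{\lambda,\mu}$ in \eqref{eq:Nlamu} (the type-$2$ seeds are, under $\beta>m_1$, non-$L^2$ virtual states, so their use deletes no actual eigenstate and, again by $\beta>m_1$, creates none; the duality Lemma~\ref{lem:lemma4} helps organise this bookkeeping). Concretely, $\mathcal{A}$ and its formal adjoint satisfy $\mathcal{A}^{\dagger}\mathcal{A}=\prod_k(H_0-E_k)$ over the seed eigenvalues, so $\mathcal{A}$ maps the orthogonal complement in $L^2((-1,1),(1-x)^{\alpha}(1+x)^{\beta}\,dx)$ of the span of the eventually-$L^2$ seeds bijectively, with bounded inverse, onto $L^2((-1,1),W^{(\alpha,\beta)}_{\lambda,\mu}\,dx)$; pushing the classical completeness of $\{P^{(\alpha,\beta)}_s\}_{s\geq0}$ through $\mathcal{A}$ and identifying $\mathcal{A}P^{(\alpha,\beta)}_s$ with $P^{(\alpha,\beta)}_{\lambda,\mu,n}$ then yields completeness of $\{P^{(\alpha,\beta)}_{\lambda,\mu,n}\}_{n\in\mathbb{N}_{\lambda,\mu}}$. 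Equivalently, this last step is the $L^2$-completeness theorem of Dur\'an~\cite{Duran-b}, and it then suffices to verify that $\lambda$ even, $\alpha>-1$ and $\beta>m_1$ imply his hypotheses.

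The main obstacle is this last step. Orthogonality is a soft consequence of self-adjointness, distinct eigenvalues and vanishing boundary terms, all available once Lemma~\ref{lem:OmegaZerosOrth} and the shifted-exponent inequalities are in hand. Completeness, by contrast, requires the exact spectral bookkeeping of the Darboux--Crum chain---that no spurious eigenstate appears and that the excised eigenvalues are precisely those indexed by $\mathbb{N}_0\setminus\mathbb{N}_{\lambda,\mu}$---which is where the Krein--Adler pairing forced by the evenness of $\lambda$ enters, together with establishing the mapping properties (closed range, and kernel equal to the span of the eventually-$L^2$ seeds) of the intertwiner $\mathcal{A}$.
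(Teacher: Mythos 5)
The paper does not actually prove this lemma: it is a restatement, under deliberately stronger hypotheses, of Dur\'an's orthogonality-and-completeness theorem for exceptional Jacobi polynomials \cite{Duran-b}, and the only argument supplied in the text is the admissibility of the weight, namely that $\Omega^{(\alpha,\beta)}_{\lambda,\mu}$ does not vanish on $[-1,1]$ (Lemma \ref{lem:OmegaZerosOrth}) together with $\alpha+r_1+r_2>-1$ and $\beta+r_1-r_2>0$; this part you reproduce correctly. Your route to the orthogonality statement is a genuinely different, more self-contained one: realize $P^{(\alpha,\beta)}_{\lambda,\mu,n}$, up to the gauge factor, as eigenfunctions of the Darboux--Crum transform of the P\"oschl--Teller Hamiltonian (the same machinery the paper uses in Section \ref{sec:ProofExceptionalZeros} for Lemma \ref{lem:Residue}) and invoke formal symmetry, distinct eigenvalues and vanishing boundary terms. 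In outline this is sound: the eigenvalue $4\bigl(s+\tfrac12(\alpha+\beta+1)\bigr)^2$ is injective in $s$ here because $\alpha+\beta+1>0$, the boundary terms vanish exactly because the shifted exponents exceed $-1$, and undoing the gauge does return the weight \eqref{eq:Weight} (the extra factor $(1+x)^{r_2}$ in $P^{(\alpha,\beta)}_{\lambda,\mu,n}/\Omega^{(\alpha,\beta)}_{\lambda,\mu}$ is what turns the naive exponent $\beta+r$ into $\beta+r_1-r_2$), although that bookkeeping and the symmetry of the transformed operator with respect to \eqref{eq:Weight} should be written out rather than asserted.

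The genuine gap is the completeness claim, which is the nontrivial content of the lemma: since $|\lambda|+|\mu|$ degrees are missing, density of all polynomials on a compact interval proves nothing, and one must show that the closed span of the exceptional family has zero, not merely finite, codimension. Your Krein--Adler/intertwining sketch asserts precisely the facts that need proof: that $\mathcal{A}$ has kernel spanned by the normalizable seeds, closed range, and maps onto $L^2\bigl([-1,1],W^{(\alpha,\beta)}_{\lambda,\mu}dx\bigr)$ ``bijectively, with bounded inverse'' (the last cannot be literally true, since $\mathcal{A}$ is an order-$r$ differential operator and hence unbounded); that $\mathcal{A}^{\dagger}\mathcal{A}=\prod_k(H_0-E_k)$ holds at the level of suitable self-adjoint realizations; and that the transformed operator has no spectrum beyond the images of the undeleted Jacobi levels. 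Making this rigorous requires fixing self-adjoint extensions of the singular Schr\"odinger operators at both endpoints and checking boundary conditions there; for instance, your claim that the type-2 seeds are never normalizable when $\beta>m_1$ fails for $m_1=0$ and $0<\beta<1$, where the endpoint is limit-circle and $L^2$-membership alone does not decide whether a level is deleted, so the ``no state created or destroyed'' bookkeeping genuinely depends on this analysis and on the evenness of $\lambda$. Your fallback---citing Dur\'an's completeness theorem and checking his hypotheses---is in effect what the paper does, but the verification that $\lambda$ even, $\alpha>-1$, $\beta>m_1$ imply his admissibility conditions is left unperformed in your proposal, and that verification (together with Lemma \ref{lem:OmegaZerosOrth}) is essentially the entire content of the lemma as the paper states it. So as written you have a plausible proof of orthogonality modulo routine checks, but not of completeness.
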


Imposing the conditions of Lemma \ref{lem:XJPorth}, the weight function \eqref{eq:Weight} has no poles in $(-1,1)$. This follows from Lemma \ref{lem:OmegaZerosOrth} as it ensures that the denominator has no zeros, and from the straightforward calculation $\alpha+r_1+r_2\geq\alpha>-1$ and $\beta+r_1-r_2> m_1-r_2\geq 0$, we get that the numerator does not give rise to any poles. More general conditions can be found in \cite{Duran-b}.

\paragraph{}
To summarize, the set of exceptional Jacobi polynomials $P^{(\alpha,\beta)}_{\lambda, \mu, n}$ for $n\in\mathbb{N}_{\lambda,\mu}$ is properly defined for all partitions and parameters. If they are well-chosen, the degree of $P^{(\alpha,\beta)}_{\lambda, \mu, n}$ equals $n$, see Lemma \ref{lem:XJPdeg}. For even stronger conditions, the set is a complete orthogonal system as indicated in Lemma \ref{lem:XJPorth}. In the rest of this paper, we discuss the set-up of the Wronskian in more detail and tackle the asymptotic behavior of the zeros of exceptional Jacobi polynomials. For each result, we specify which conditions for partitions and parameters are required.

\begin{remark}
Exceptional orthogonal polynomials are divided into three classes, exceptional Hermite, exceptional Laguerre and exceptional Jacobi polynomials \cite{GarciaFerrero_GomezUllate_Milson}. Each class can be defined using partitions. For exceptional Jacobi polynomials, we need two partitions to define these polynomials (as we need two sets of eigenfunctions). This coincides with the Laguerre case \cite{Bonneux_Kuijlaars} while for the Hermite case \cite{Kuijlaars_Milson} we only need 1 partition.
\end{remark}

\begin{remark}
The functions $f_1,\dots,f_r$ in \eqref{def:XJP1} are often called the seed functions as they are used in the Darboux transformations to obtain the exceptional Jacobi polynomial. In our setting, the seed functions are eigenfunctions as indicated in Table \ref{tab:1}. One could wonder if there are other seed functions to construct exceptional Jacobi polynomials. The answer is yes, one can use generalized eigenfunctions, i.e., for an operator $T$, a function $\phi$ is called a generalized eigenfunction if $(T-\lambda)(\phi) \neq 0$ and $(T-\lambda)^2(\phi) = 0$. An example of such a construction can be found in \cite{Bagchi_Grandati_Quesne}. So, the list of exceptional Jacobi polynomials in this paper, indexed by two partitions, is not complete: it does not exhaust all possibilities. Although, the feature of generalized eigenfunctions somehow escapes the normal setting and therefore can be seen as a degenerated case. More details concerning this feature can be found in \cite{Grandati_Quesne} and further study is definitely needed to obtain a full classification.
\end{remark}

\section{Degree and leading coefficient of $\Omega^{(\alpha,\beta)}_{\lambda, \mu}$}\label{sec:DegreeLC}
We defined the generalized Jacobi polynomial as a Wronskian with an appropriate prefactor, see \eqref{eq:OmegaLaMu}. Similarly, the exceptional Jacobi polynomial \eqref{def:XJP1} is just a specific generalized Jacobi polynomial as described in the beginning of Section \ref{sec:XJPDef}. It is not directly clear from the definition that these functions are polynomials. This section deals with this issue. We prove that the generalized Jacobi polynomial is a polynomial and derive its degree and leading coefficient. The result we obtain holds for general polynomials, although we only applied it in the Jacobi setting as specified in Lemma \ref{lem:deg} and Lemma \ref{lem:XJPdeg}. The following proposition is the main result of this section.

\begin{proposition}\label{prop:degwr}
Let $r_1,r_2$ be non-negative integers and set $r=r_1+r_2$. Let $R_1,\dots,R_r$ be non-zero monic polynomials such that $\deg R_i \neq \deg R_{j}$ for $i\neq j$ and $1\leq i,j \leq r_1$ or $r_1+1 \leq i,j \leq r$. Take $c,\beta\in\mathbb{R}$. Then the function
\begin{equation}\label{eq:OmegaDef}
	\Omega := (x+c)^{(r_1-\beta)r_2} \cdot \Wr\left[R_1,\dots,R_{r_1},(x+c)^{\beta} R_{r_1+1},\dots,(x+c)^{\beta} R_{r}\right]
\end{equation}
is a polynomial. Moreover, if  $\beta \neq \deg R_i - \deg R_{r_1+j}$ for  $1\leq i \leq r_1$ and  $1\leq j \leq r_2$, then $\Omega$ is a polynomial of degree $\sum\limits_{i=1}^{r} \deg R_{i} - \frac{r_1(r_1-1)}{2} - \frac{r_2(r_2-1)}{2}$ with leading coefficient
\begin{equation}\label{eq:OmegaLeaCoe}
	\prod_{1\leq i < j \leq r_1}\left(\deg R_j - \deg R_i \right) \prod_{1\leq i < j \leq r_2}\left(\deg R_{r_1+j} - \deg R_{r_1+i} \right) \prod_{i=1}^{r_1}\prod_{j=1}^{r_2}\left(\beta-\deg R_i + \deg R_{r_1+j}\right).
\end{equation}
\end{proposition}

If we specify Proposition \ref{prop:degwr} to Jacobi polynomials, we get the result of Lemma \ref{lem:deg}. Then, the assumptions in the proposition coincide with the conditions \eqref{Condition1GJP}. 

The result of Lemma \ref{lem:deg} was already proven by Dur\'an. He also approached via general polynomials and then specified to Jacobi polynomials. Hence our Proposition \ref{prop:degwr} can be compared to his general statement \cite[Lemma 3.3]{Duran-a}. The main difference between our approach and Dur\'an's, is that he used a limiting procedure whereas our approach is more direct.

Proposition \ref{prop:degwr} tells us that there is a degree reduction if the conditions for $\beta$ are not satisfied, then the coefficient \eqref{eq:OmegaLeaCoe} would vanish. Moreover, the leading coefficient \eqref{eq:OmegaLeaCoe} is in fact a specific Vandermonde determinant as shown in the proof of Lemma \ref{lem:WronskianMonomials} later on. The proof is given in Section \ref{sec:ProofDegWr}.

Next, we state another result which we use in the proof of Lemma \ref{lem:XJPlincomJac}. The proof itself is in Section \ref{sec:ProofDegQ}.

\begin{proposition}\label{prop:degQ}
Let $r_1,r_2$ be non-negative integers and set $r=r_1+r_2$. Let $R_1,\dots,R_r,R$ be non-zero monic polynomials such that $\deg R_i \neq \deg R_{j}$ for $i\neq j$ and $1\leq i,j \leq r_1$ or $r_1+1 \leq i,j \leq r$. Take $c,\beta\in\mathbb{R}$ and fix $k\in\{0,\dots,r\}$. Let $C_k$ denote the $r\times r$-matrix obtained by deleting the $(k+1)$-th row and $(r+1)$-th column of matrix linked to the Wronskian  $\Wr\left[R_1,\dots,R_{r_1},(x+c)^{\beta} R_{r_1+1},\dots,(x+c)^{\beta} R_{r}, R\right]$. Then the function
\begin{equation*}
	Q_k 
		:= (x+c)^{(r_1+1-\beta)r_2}  \cdot \det(C_k)
\end{equation*}
is a polynomial. Moreover, if $\beta \neq \deg R_i - \deg R_{r_1+j}$ for  $1\leq i \leq r_1$ and  $1\leq j \leq r_2$, then $Q_k$ is a polynomial with degree at most $\sum\limits_{i=1}^{r} \deg R_{i} - \frac{r_1(r_1-1)}{2} - \frac{r_2(r_2-1)}{2} -r_1+k$.
\end{proposition}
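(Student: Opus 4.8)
The plan is to mirror the proof of Proposition \ref{prop:degwr}, adapting it to the ``punctured'' matrix $C_k$. First I would record that $\det(C_k)$ is, up to the sign $(-1)^{k}$ coming from the Laplace expansion of the full $(r+1)\times(r+1)$ Wronskian matrix along its last column, precisely the minor that multiplies $R$ in the expansion of $\Wr[R_1,\dots,R_{r_1},(x+c)^{\beta}R_{r_1+1},\dots,(x+c)^{\beta}R_r,R]$. In particular $\det(C_k)$ does not involve $R$ at all; it is built only from $R_1,\dots,R_r$ and their derivatives, with the $(k+1)$-th derivative row omitted. So the object to analyse is a determinant whose rows are the $0$-th through $r$-th derivatives of the columns, skipping order $k$.

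The first step is to show $Q_k$ is a polynomial. As in Proposition \ref{prop:degwr}, the prefactor $(x+c)^{(r_1+1-\beta)r_2}$ is chosen so that pulling the factors $(x+c)^{\beta}$ out of the last $r_2$ columns clears all negative powers of $(x+c)$: differentiating $(x+c)^{\beta}R_{r_1+j}$ a total of $i$ times produces $(x+c)^{\beta-i}$ times a polynomial, and after factoring $(x+c)^{\beta}$ out of each of the $r_2$ columns one is left with entries $(x+c)^{-e}\cdot(\text{polynomial})$ where $e$ ranges over the derivative orders appearing. Because one derivative order (namely $k$) is \emph{missing}, the derivative orders present are a size-$r$ subset of $\{0,1,\dots,r\}$, so the largest is at most $r$, and multiplying the determinant by $(x+c)^{r_2 \cdot r}$ —which is majorised by $(x+c)^{(r_1+1-\beta)r_2}\cdot(x+c)^{\beta r_2}$— suffices to clear denominators. (This is exactly the bookkeeping in the proof of Proposition \ref{prop:degwr}, with $r_1$ replaced by $r_1+1$ to account for the extra column and the extra available derivative order.) Hence $Q_k$ is a polynomial.

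For the degree bound, I would pass to leading terms exactly as in the companion proof: replace each monic $R_i$ by the monomial $x^{\deg R_i}$ (equivalently, let $x\to\infty$), which changes $Q_k$ by lower-order terms only, and invoke the Wronskian-of-monomials computation (the lemma referenced as Lemma \ref{lem:WronskianMonomials} in the proof of Proposition \ref{prop:degwr}). The determinant of monomials-and-their-derivatives, with one derivative row deleted, evaluates to a monomial in $x$ times a (generalised Vandermonde / Schur-type) constant; under the hypothesis $\beta\neq\deg R_i-\deg R_{r_1+j}$ that constant is nonzero, matching the structure of \eqref{eq:OmegaLeaCoe}. Tracking the exponent of $x$: the full Wronskian of $R_1,\dots,R_r,R$ with prefactor has degree $\sum_{i=1}^{r}\deg R_i+\deg R-\frac{r_1(r_1-1)}{2}-\frac{r_2(r_2-1)}{2}$ by Proposition \ref{prop:degwr} (applied with $r_1+1$ in place of $r_1$, the extra column being $R$); expanding along the last column, the term with $R$ carries the minor $\pm\det(C_k)$ whose row set omits order $k$ rather than order $r$, and each unit decrease in a derivative order raises the degree of the corresponding minor by one. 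Comparing the $R\cdot\det(C_r)$ term (missing order $r$) with the $R^{(r)}\cdot\det(C_0)$-type terms and keeping careful track of where the $-r_1$ and $+k$ come from yields exactly $\deg Q_k \le \sum_{i=1}^{r}\deg R_i-\frac{r_1(r_1-1)}{2}-\frac{r_2(r_2-1)}{2}-r_1+k$, with ``at most'' rather than equality because a cancellation among top-degree contributions is possible once $R$ is removed.

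The main obstacle I anticipate is the exponent bookkeeping in this last step: one must correctly combine (i) the shift $r_1\mapsto r_1+1$ when treating $R$ as an extra column of type \eqref{eq:fj1}, (ii) the fact that deleting the $(k+1)$-th row of an $(r+1)$-row matrix leaves derivative orders $\{0,\dots,r\}\setminus\{k\}$, and (iii) the $(x+c)$-power accounting in the prefactor, which now has exponent $(r_1+1-\beta)r_2$. A clean way to keep these straight is to first prove the monomial case by a direct determinant computation (generalised Vandermonde with a deleted derivative order), read off the exact degree there, and then argue that the non-monic corrections only lower the degree — giving the stated inequality.
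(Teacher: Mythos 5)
Your polynomiality step is where the argument breaks. After factoring $(x+c)^{\beta}$ out of the last $r_2$ columns, the entry in the row of derivative order $i$ is $(x+c)^{-i}$ times a polynomial, so clearing denominators column by column requires multiplying each of those $r_2$ columns by $(x+c)^{r}$ (the largest order present), i.e. by $(x+c)^{r_2 r}$ in total, whereas the prefactor only supplies $(x+c)^{(r_1+1-\beta)r_2}\cdot(x+c)^{\beta r_2}=(x+c)^{(r_1+1)r_2}$. Your claim that $r_2 r$ is ``majorised by'' $(r_1+1)r_2$ amounts to $r\le r_1+1$, which fails whenever $r_2\ge 2$; even the sharper permutation-by-permutation bound (the sum of the $r_2$ largest orders in $\{0,\dots,r\}\setminus\{k\}$, which is about $r_2r-\tfrac{r_2(r_2-1)}{2}$) still exceeds $(r_1+1)r_2$. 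So entrywise bookkeeping cannot show that $Q_k$ is a polynomial with this prefactor; cancellations inside the determinant are essential. The paper handles this by expanding every $R_i$ into its monomials via multilinearity and invoking Lemma \ref{lem:WronskianMonomials2}: each term of the expansion is a single monomial $e\cdot x^{N}$, and after multiplying by the prefactor the exponents of all surviving terms are non-negative integers. Your polynomiality argument needs to be replaced by this (or an equivalent structural) argument rather than an entrywise estimate.

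The degree bound also has a gap. Your main route --- applying Proposition \ref{prop:degwr} to the full $(r+1)$-column Wronskian and ``comparing'' the Laplace terms --- is not a proof: the degree of the full determinant only controls the sum of the products of minors with derivatives of $R$, and an individual minor such as $\det(C_k)$ may have larger degree that cancels in that sum, so no bound on $\det(C_k)$ follows this way. The heuristic ``each unit decrease in a derivative order raises the degree of the minor by one'' is exactly the quantity that must be computed, and you never compute it; the $-r_1+k$ in the statement comes from the exponent $N=\sum k_i+\sum l_j+\beta r_2-\tfrac{r(r+1)}{2}+k$ of the deleted-row determinant of monomials (Lemma \ref{lem:WronskianMonomials2}) combined with the prefactor, a computation your sketch defers to ``careful tracking''. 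Your closing alternative (do the monomial case first by a direct generalized-Vandermonde computation, then use multilinearity to see that the non-leading monomials only contribute lower degree) is precisely the paper's proof, but as written it is a plan rather than an argument. Finally, your claim that the hypothesis $\beta\neq\deg R_i-\deg R_{r_1+j}$ makes the top constant nonzero is unjustified for the deleted-row determinant (the relevant constant is a generalized Vandermonde-type determinant that can vanish); the paper deliberately leaves this constant unspecified, which is exactly why only an upper bound on the degree is asserted. This over-claim does not invalidate the inequality, but it should be dropped.
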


When $k=r$, then $Q_r$ is closely related to $\Omega$ defined in \eqref{eq:OmegaDef}. In that case, $Q_r$ consists of the similar Wronskian as $\Omega$ but differs in the prefactor, i.e., $Q_r=(x+c)^{r_2}\cdot \Omega$. Nevertheless, both degree statements of $Q_r$ and $\Omega$ agree with each other, although Proposition \ref{prop:degwr} is stronger. A similar argument can be made for $k=0$, then $Q_0$ consists of the Wronskian with elements which are the derivatives of the elements in the Wronskian of $\Omega$. 

\subsection{Proof of Proposition \ref{prop:degwr}}\label{sec:ProofDegWr}
We start with a lemma considering the Wronskian of monomials. Next, we prove Proposition \ref{prop:degwr} using this lemma various times. 

\begin{lemma}\label{lem:WronskianMonomials}
Take two sets of non-negative integers $k_1,\dots,k_{r_1}$ and $l_1,\dots,l_{r_2}$. Set $r=r_1+r_2$. Then
\begin{equation}\label{eq:WronskianMonomials}
	\Wr[x^{k_1},\dots,x^{k_{r_1}},x^{l_1+\beta},\dots,x^{l_{r_2}+\beta}]
		= e \cdot x^{N} 
\end{equation}
where
\begin{align}
	N
		&=\sum\limits_{i=1}^{r_1}k_i + \sum\limits_{j=1}^{r_2}l_j + \beta r_2- \frac{r(r-1)}{2},
		\label{eq:WronskianMonomialsDegree}\\
	e
		&= \prod_{1\leq i < j \leq r_1}(k_j-k_i) \prod_{1\leq i < j \leq r_2}(l_j-l_i) \prod_{i=1}^{r_1}\prod_{j=1}^{r_2}(\beta+l_j-k_i).
		\label{eq:WronskianMonomialsConstant}
\end{align}
\end{lemma}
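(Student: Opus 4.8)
The plan is to compute the Wronskian $\Wr[x^{k_1},\dots,x^{k_{r_1}},x^{l_1+\beta},\dots,x^{l_{r_2}+\beta}]$ directly by exploiting the fact that each column of the associated matrix is, up to a scalar factor, a pure power of $x$. Indeed, differentiating $x^{k}$ exactly $i-1$ times gives $(k)(k-1)\cdots(k-i+2)\,x^{k-i+1} = \frac{k!}{(k-i+1)!}x^{k-i+1}$ (interpreted via falling factorials so it makes sense for the non-integer exponents $l_j+\beta$ as well). So the $(i,j)$-entry of the matrix is $\left(\prod_{t=0}^{i-2}(k_j-t)\right)x^{k_j-i+1}$ for $j\le r_1$, and $\left(\prod_{t=0}^{i-2}(l_j+\beta-t)\right)x^{l_j+\beta-i+1}$ for the last $r_2$ columns. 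First I would factor $x^{k_j-r+1}$ out of column $j$ (for $j\le r_1$) and $x^{l_j+\beta-r+1}$ out of each of the last $r_2$ columns; summing these exponents gives exactly the claimed degree $N = \sum k_i + \sum l_j + \beta r_2 - \binom{r}{2}$, which handles \eqref{eq:WronskianMonomialsDegree}.

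After pulling out those powers of $x$, the remaining matrix has $(i,j)$-entry equal to $\prod_{t=0}^{i-2}(y_j - t)$ where I set $y_j = k_j$ for $j\le r_1$ and $y_{r_1+j}=l_j+\beta$ for $j=1,\dots,r_2$; that is, the entries are the falling-factorial polynomials $y_j^{\underline{\,i-1\,}}$ evaluated at the $y_j$. The key observation is that $\{1, y, y(y-1), y(y-1)(y-2),\dots\}$ is a basis of the polynomial ring obtained from $\{1,y,y^2,\dots\}$ by a unipotent (upper-triangular, unit-diagonal) change of basis. Hence performing the corresponding elementary row operations turns this matrix into the Vandermonde matrix $(y_j^{\,i-1})_{i,j=1}^{r}$ without changing the determinant, so the determinant equals the Vandermonde product $\prod_{1\le a<b\le r}(y_b-y_a)$.

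Finally I would split this Vandermonde product according to whether the indices lie among the first $r_1$, among the last $r_2$, or one in each block:
\begin{equation*}
\prod_{1\le a<b\le r}(y_b-y_a)
= \prod_{1\le i<j\le r_1}(k_j-k_i)\ \prod_{1\le i<j\le r_2}(l_j-l_i)\ \prod_{i=1}^{r_1}\prod_{j=1}^{r_2}\bigl((l_j+\beta)-k_i\bigr),
\end{equation*}
which is exactly $e$ as in \eqref{eq:WronskianMonomialsConstant}. Combining with the extracted power $x^N$ yields \eqref{eq:WronskianMonomials}. The only point needing a little care is the bookkeeping of signs and the ordering of the factors in the cross-block product (ensuring $(y_b-y_a)$ with $b$ in the second block and $a$ in the first gives $(l_j+\beta)-k_i$ and not its negative), together with checking that the falling-factorial manipulations remain valid when the exponents $l_j+\beta$ are not integers — but since differentiation of $x^{c}$ for real $c$ still produces $c(c-1)\cdots(c-i+2)x^{c-i+1}$, the algebra goes through verbatim. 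This is the main (though modest) obstacle; everything else is routine determinant algebra.
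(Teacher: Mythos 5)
Your proposal is correct in substance and follows essentially the same route as the paper: the paper isolates the monomial $x^{N}$ by observing that every term in the permutation expansion of the determinant is a constant times $x^{N}$, then evaluates the Wronskian at $x=1$ and reduces the resulting falling-factorial matrix to the Vandermonde matrix in $k_1,\dots,k_{r_1},l_1+\beta,\dots,l_{r_2}+\beta$ by elementary row operations — exactly your unipotent change-of-basis argument, followed by the same block splitting of the Vandermonde product. One bookkeeping slip in your write-up should be fixed: factoring $x^{y_j-r+1}$ out of column $j$ leaves the $(i,j)$ entry equal to $\bigl(\prod_{t=0}^{i-2}(y_j-t)\bigr)x^{\,r-i}$, not a constant, and the sum of those column exponents is $N-\frac{r(r-1)}{2}$, not $N$; you must also factor $x^{\,r-i}$ out of row $i$ (equivalently, factor $x^{y_j}$ from column $j$ and $x^{-(i-1)}$ from row $i$), which supplies the missing $x^{r(r-1)/2}$ and leaves the constant falling-factorial matrix that you then row-reduce. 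With that correction your argument is complete and agrees with the paper's proof.
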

\begin{proof}
Consider the Wronskian \eqref{eq:WronskianMonomials}. The determinant expression via the sum of all permutations gives that each element consists of the monomial $x^N$ where $N$ is given by \eqref{eq:WronskianMonomialsDegree}. Hence, we directly find identity \eqref{eq:WronskianMonomials}. To derive $e$, evaluate the Wronskian in 1 such that we obtain 
\begin{equation*}
	e=
	\begin{vmatrix}
		1 & \dots & 1 & 1 & \dots & 1 \\
		k_1 & \dots & k_{r_1} & l_1+\beta & \dots & l_{r_2}+\beta \\
		k_1(k_1-1) & \dots & k_{r_1}(k_{r_1}-1) & (l_1+\beta)(l_1+\beta-1) & \dots & (l_{r_2}+\beta)(l_{r_2}+\beta-1) \\
		\vdots & \ddots & \vdots & \vdots & \ddots & \vdots \\
		(k_1)_{r-1} & \dots & (k_{r_1})_{r-1} & (l_1+\beta)_{r-1} & \dots & (l_{r_2}+\beta)_{r-1}
	\end{vmatrix}
\end{equation*}
where we use the Pochhammer symbol. After elementary row operations, we obtain that $e$ is the Vandermonde determinant of the elements $k_1,\dots,k_{r_1},l_1+\beta,\dots,l_{r_2}+\beta$. This determinant is well-known, it is given by the product of all differences of each two elements. If we write out this value, we easily obtain \eqref{eq:WronskianMonomialsConstant}.
\end{proof}

Now we are able to prove Proposition \ref{prop:degwr}. Consider the set-up as in the proposition. Let $r_1,r_2$ be non-negative integers and define $r=r_1+r_2$. Fix $R_1,\dots,R_r$ as non-zero monic polynomials such that $\deg R_i \neq \deg R_{j}$ for $i\neq j$ and $1\leq i,j \leq r_1$ or $r_1+1 \leq i,j \leq r$. Fix $c,\beta\in\mathbb{R}$. 

\begin{proof}[Proof of Proposition \ref{prop:degwr}]
Without loss of generality, we take $c=0$. When $c\neq0$, we would shift the variable $x\to x-c$ which does not influence the degree nor the leading coefficient of the polynomials $R_1,\dots,R_r$.

Recall the definition of $\Omega$ (where we take $c=0$), 
\begin{equation}\label{eq:OmegaDef2}
	\Omega 
		= x^{(r_1-\beta)r_2} \cdot \Wr\left[R_1,\dots,R_{r_1},x^{\beta} R_{r_1+1},\dots,x^{\beta} R_{r}\right].
\end{equation}
We first simplify our notation, set
\begin{align*}
	\deg(R_j) &= n_j && 1\leq j \leq r_1, \\
	\deg(R_{r_1+j})&= m_j && 1\leq j \leq r_2,
\end{align*}
and as we are dealing with monic polynomials, we may write
\begin{align*}
	& R_j(x) 
		=x^{n_j} + \sum_{k=0}^{n_j-1} c_{j,k} x^k && 1\leq j \leq r_1, \\
	& R_{r_1+j}(x) 
		=x^{m_j} + \sum_{l=0}^{m_j-1} d_{j,l} x^l && 1\leq j \leq r_2.
\end{align*}

Next, we use the multilinearity of the determinant to expand the Wronskian in \eqref{eq:OmegaDef2}, we obtain
\begin{multline}\label{eq:DegreeProof1}
	\Wr[x^{n_1},\dots,x^{n_{r_1}},x^{m_1+\beta},\dots,x^{m_{r_2}+\beta}] + \\
		 \sum_{k_1=0}^{n_1-1} \dots \sum_{k_{r_1}=0}^{n_{r_1}-1} \sum_{l_1=0}^{m_1-1} \dots  \sum_{l_{r_2}=0}^{m_{r_2}-1} c_{1,k_1}\dots  c_{r_1,k_{r_1}} d_{1,l_1} \dots d_{r_2,l_{r_2}}
		 	\Wr[x^{k_1},\dots,x^{k_{r_1}},x^{l_1+\beta},\dots,x^{l_{r_2}+\beta}]
\end{multline}
For each Wronskian in \eqref{eq:DegreeProof1}, there are two possibilities. Either Lemma \ref{lem:WronskianMonomials} tells us that it equals a monomial multiplied by a constant, or it vanishes if two entries in the Wronskian are the same. We start with the first Wronskian in \eqref{eq:DegreeProof1} and next we consider the Wronskian in the multiple sum.

If we apply Lemma \ref{lem:WronskianMonomials} on the first Wronskian in \eqref{eq:DegreeProof1}, we get
\begin{equation*}
	\Wr[x^{n_1},\dots,x^{n_{r_1}},x^{m_1+\beta},\dots,x^{m_{r_2}+\beta}]
		= e_N \cdot x^N
\end{equation*}
with
\begin{align}
	N
		&= \sum_{i=1}^{r_1}n_i + \sum_{j=1}^{r_2}m_j + \beta r_2- \frac{r(r-1)}{2}, 
		\label{eq:DegreeProofN}\\
	e_N
		&= \prod_{1\leq i < j \leq r_1}\left(n_j - n_i \right) \prod_{1\leq i < j \leq r_2}\left(m_j - m_i \right) \prod_{i=1}^{r_1}\prod_{j=1}^{r_2}\left(\beta-n_i + m_j\right).
		\label{eq:DegreeProofeN}
\end{align}

Next, consider a Wronskian in the multiple sum of \eqref{eq:DegreeProof1}. According to Lemma \ref{lem:WronskianMonomials}, this Wronskian vanishes when two entries are the same as then the coefficient \eqref{eq:WronskianMonomialsConstant} equals zero. Therefore, we may assume (possibly after rearranging terms), $k_1> k_2 > \dots > k_{r_1} \geq 0$ and $l_1> l_2 > \dots > l_{r_2} \geq 0$. We get  
\begin{equation*}
 	\Wr[x^{k_1},\dots,x^{k_{r_1}},x^{l_1+\beta},\dots,x^{l_{r_2}+\beta}]
		= e_{k_1,\dots,l_{r_2}} x^{N_{k_1,\dots,l_{r_2}}}
\end{equation*}
for some constant $e_{k_1,\dots,l_{r_2}}\in\mathbb{R}$ and where
\begin{equation}\label{eq:DegreeProofN2}
	N_{k_1,\dots,l_{r_2}}
		= \sum_{i=1}^{r_1} k_i + \sum_{j=1}^{r_2} l_j +\beta r_2 - \frac{r(r-1)}{2}.
\end{equation}
The constant $e_{k_1,\dots,l_{r_2}}$ still can be equal to zero for specific values of the parameter $\beta$.

Now we argue that $\Omega$ is a polynomial. Therefore, we must take into account the monomial prefactor in \eqref{eq:OmegaDef2}. As all the terms in \eqref{eq:DegreeProof1} are monomials, we just get a shift in the exponents \eqref{eq:DegreeProofN} and \eqref{eq:DegreeProofN2} if we multiply each term by this prefactor. A short calculation gives 
\begin{align*}
	\tilde{N}
		&:= N + (r_1-\beta)r_2 = \sum_{i=1}^{r} \deg R_{i} - \frac{r_1(r_1-1)}{2} - \frac{r_2(r_2-1)}{2} \geq 0, \\
	\tilde{N}_{k_1,\dots,l_{r_2}}
		&:= N_{k_1,\dots,l_{r_2}} + (r_1-\beta)r_2 \geq 0.	
\end{align*}
The obtained values $\tilde{N}$ and $\tilde{N}_{k_1,\dots,l_{r_2}}$ trivially are integers and one directly sees that $\tilde{N}\geq 0$. The result that $\tilde{N}_{k_1,\dots,l_{r_2}}\geq0$ for all choices of $k_1,\dots,l_{r_2}$ follows from the fact that the lowest possible value is obtained when $k_j=r_1-j$ for all $j=1,\dots,r_1$ and $l_j=r_2-j$ for all $j=1,\dots,r_2$. In this case, $\tilde{N}_{k_1,\dots,l_{r_2}}=0$. We also find 
\begin{equation*}
	\tilde{N} > \tilde{N}_{k_1,\dots,l_{r_2}}
\end{equation*}
as $k_i < n_i$ for all $i=1,\dots,r_1$ and $l_j<m_j$ for all $j=1,\dots,r_2$. Therefore, $\Omega$ is a polynomial as all exponents of the monomials are non-negative integers, and the leading term is given by $x^{\tilde{N}}$ as long as the corresponding coefficient $e_N$, see \eqref{eq:DegreeProofeN}, does not vanish. Hence, we impose conditions for the parameter $\beta$ such that $e_N\neq 0$. Under these conditions, $\Omega$ has degree $\tilde{N}$ with leading coefficient $e_N$ which ends the proof.
\end{proof}

\subsection{Proof of Proposition \ref{prop:degQ}}\label{sec:ProofDegQ}

\begin{lemma}\label{lem:WronskianMonomials2}
Take two sets of non-negative integers $k_1,\dots,k_{r_1}$ and $l_1,\dots,l_{r_2}$. Set $r =r_1+r_2$. Fix $k\in\{0,1\dots,r \}$. Let $C_k$ denote the $r\times r$-matrix obtained by deleting the $(k+1)$-th row and $(r+1)$-th column of matrix linked to the Wronskian  $\Wr[x^{k_1},\dots,x^{k_{r_1}},x^{l_1+\beta},\dots,x^{l_{r_2}+\beta},1]$. Then 
\begin{equation*}
	\det(C_k)
		= e \cdot x^{N},
\end{equation*}
where $N=\sum\limits_{i=1}^{r_1}k_i + \sum\limits_{j=1}^{r_2}l_j + \beta r_2- \frac{r(r+1)}{2}+k$ and for some constant $e\in\mathbb{R}$.
\end{lemma}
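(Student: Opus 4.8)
The plan is to compute $\det(C_k)$ directly from the Leibniz expansion, exploiting that every entry of $C_k$ is a scalar multiple of a power of $x$. Write $p_i = k_i$ for $1 \le i \le r_1$ and $p_{r_1+j} = l_j + \beta$ for $1 \le j \le r_2$, so that the functions entering the Wronskian are $x^{p_1},\dots,x^{p_r},1$. The $(r+1)\times(r+1)$ matrix associated with $\Wr[x^{p_1},\dots,x^{p_r},1]$ has its $i$-th row equal to the $(i-1)$-th derivatives of the $r+1$ functions, so deleting the $(k+1)$-th row removes exactly the row of $k$-th derivatives. Thus $C_k$ is the $r\times r$ matrix whose rows are indexed by the derivative orders in $S := \{0,1,\dots,r\}\setminus\{k\}$ and whose columns are indexed by $x^{p_1},\dots,x^{p_r}$ (the last column, belonging to the constant function $1$, having been deleted).

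First I would record that for any $p\in\mathbb{R}$ and $a\in\mathbb{N}_0$ one has $\frac{d^a}{dx^a}x^p = p(p-1)\cdots(p-a+1)\,x^{p-a}$, i.e.\ a (possibly zero) constant times $x^{p-a}$. Hence every entry of $C_k$ is a constant times a monomial, and in the expansion $\det(C_k) = \sum_{\sigma}\sgn(\sigma)\prod_{j=1}^{r}(C_k)_{\sigma(j),j}$, where $\sigma$ runs over the bijections from $\{1,\dots,r\}$ to $S$, each summand is a constant times $x^{\sum_j p_j - \sum_{a\in S} a}$. The point is that the exponent $\sum_j p_j - \sum_{a\in S} a$ does not depend on $\sigma$, since $\sigma$ is a bijection onto $S$. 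Evaluating, $\sum_{j=1}^{r}p_j = \sum_{i=1}^{r_1}k_i + \sum_{j=1}^{r_2}l_j + \beta r_2$ and $\sum_{a\in S} a = \frac{r(r+1)}{2} - k$, so the common exponent equals $N = \sum_i k_i + \sum_j l_j + \beta r_2 - \frac{r(r+1)}{2} + k$. Collecting coefficients gives $\det(C_k) = e\,x^N$ for the resulting constant $e\in\mathbb{R}$, which is the claim (with $e$ possibly $0$).

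There is essentially no serious obstacle here; the care needed is only in the bookkeeping. One must keep straight that row $i$ of the Wronskian matrix carries the $(i-1)$-th derivative, so the deleted $(k+1)$-th row is precisely the $k$-th-derivative row — this is what produces the ``$+k$'' correction in $N$ relative to the full-Wronskian case of Lemma \ref{lem:WronskianMonomials}. One should also note that some entries of $C_k$, and hence some terms in the expansion, may vanish (for instance when a derivative order exceeds a non-negative integer exponent $k_i$, or when $l_j+\beta$ is a small non-negative integer), but this affects only the value of $e$, not the fact that $\det(C_k)$ is a scalar multiple of $x^N$. A remark analogous to the one after Lemma \ref{lem:WronskianMonomials} could be inserted noting that $e$ is, up to sign, a minor of a Vandermonde-type matrix; but since the precise value of $e$ is not needed for Proposition \ref{prop:degQ}, I would not compute it.
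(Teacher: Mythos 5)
Your proof is correct and follows essentially the same route as the paper's: expand $\det(C_k)$ over permutations, observe that every term is a constant multiple of $x^N$ because the deleted row shifts the total derivative order by $k$, and absorb all coefficients into $e$. Your version simply makes the bookkeeping (rows indexed by $S=\{0,\dots,r\}\setminus\{k\}$, common exponent $\sum_j p_j-\sum_{a\in S}a$) explicit, which the paper leaves implicit.
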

\begin{proof}
Consider the Wronskian. The determinant expression via the sum of all permutations gives that each element consists of the monomial $x^N$ and a constant. Hence, we get that the Wronskian equals $e \cdot x^N$ for some $e\in\mathbb{R}$. The constant $e$ can be determined explicitly, but we omit further specification. 
\end{proof}

Now we prove Proposition \ref{prop:degQ}. As the ideas are completely similar to the proof of Proposition \ref{prop:degwr}, we only highlight the differences. As we are only interested in an upper bound for the degree, we do not investigate the corresponding leading coefficient.

\begin{proof}[Proof of Proposition \ref{prop:degQ}]
Again, we may assume that $c=0$. Fix $k\in\{0,1,\dots,r\}$. We use the same notation (as in the proof of Proposition \ref{prop:degwr}) and we expand the polynomials $R_j$ for $j=1,\dots,r$ in the same way.

Again we use the multilinearity of the determinant to decompose the Wronskian, we get a similar expansion as in \eqref{eq:DegreeProof1}. Now we delete the $(k+1)$-th row and $(r+1)$-th column in each Wronskian. Then we can apply Lemma \ref{lem:WronskianMonomials2} to get that each Wronskian is a monomial multiplied by a constant. The degrees are
\begin{align*}
	N
		&= \sum_{i=1}^{r_1}n_i + \sum_{j=1}^{r_2}m_j + \beta r_2- \frac{r(r-1)}{2} +k, \\
	N_{k_1,\dots,l_{r_2}}
		&= \sum_{i=1}^{r_1} k_i + \sum_{j=1}^{r_2} l_j +\beta r_2 - \frac{r(r-1)}{2} +k.	
\end{align*}

Next, we take into account the prefactor and obtain that the degrees of the monomials shift to
\begin{align*}
	\tilde{N}
		&:= N + (r_1+1-\beta)r_2 
		= \sum\limits_{i=1}^{r} \deg R_{i} - \frac{r_1(r_1-1)}{2} - \frac{r_2(r_2-2)}{2} -r_1+k \geq 0,\\
	\tilde{N}_{k_1,\dots,l_{r_2}}
		&:= N_{k_1,\dots,l_{r_2}} + (r_1+1-\beta)r_2 \geq 0.	
\end{align*}
Again the values are non-negative integers and $\tilde{N}>\tilde{N}_{k_1,\dots,l_{r_2}}\geq 0$, hence we are dealing with a polynomial of degree at most $\tilde{N}$. As the leading coefficient could vanish, we only obtain an upper bound for the degree. This ends the proof.
\end{proof}

\section{Construction of the generalized Jacobi polynomial}\label{sec:ConstructionGJP}
In this section we reformulate and prove the result of Takemura in \cite{Takemura-Maya}. He showed that a Wronskian, containing all four types of eigenfunctions of Table \ref{tab:1}, is essentially equal to a Wronskian consisting of only two types of eigenfunctions. To be precise, let
\begin{align}
	f_j(x) = &P^{(\alpha,\beta)}_{n_j}(x), &&j=1,\dots,r_1,
	\label{app:fj1}\\
	f_{r_1+j}(x) = &(1+x)^{-\beta}P^{(\alpha,-\beta)}_{m_j}(x), &&j=1,\dots,r_2,
	\label{app:fj2}\\
	f_{r_1+r_2+j}(x) = &(1-x)^{-\alpha} P^{(-\alpha,\beta)}_{m'_j}(x), &&j=1,\dots,r_3,
	\label{app:fj3}\\
	f_{r_1+r_2+r_3+j}(x) = &(1-x)^{-\alpha}(1+x)^{-\beta} P^{(-\alpha,-\beta)}_{n'_j}(x) &&j=1,\dots,r_4,
	\label{app:fj4}
\end{align}
with $r_1 + r_2 + r_3 + r_4 = r$, $n_1 > n_2 > \cdots > n_{r_1} \geq 0$,
$m_1 > m_2 > \cdots > m_{r_2} \geq 0$, $m_1' > m_2' > \cdots > m_{r_3}' \geq 0$,
and $n_1' > n_2' > \cdots > n_{r_1}' \geq 0$. We define 
\begin{equation}\label{eq:Omega}
	\Psi := (1+x)^{(\beta+r_1+r_3)(r_2+r_4)}(1-x)^{(\alpha+r_1+r_2)(r_3+r_4)} 
		\Wr \left[ f_1, f_2, \ldots, f_r \right]. 
\end{equation}
The result is that whenever $\Psi$ does not vanish, there are two partitions $\lambda,\mu$ and parameters $\alpha',\beta'$ such that $\Psi = c \cdot \Omega^{(\alpha',\beta')}_{\lambda,\mu}$ for some constant $c$ and where $\Omega^{(\alpha',\beta')}_{\lambda,\mu}$ is the generalized Jacobi polynomial defined in \eqref{eq:OmegaLaMu}. We therefore conclude in particular that $\Psi$ is a polynomial too. 

The procedure to obtain this result can be visualized by Maya diagrams. We shortly recall a few notions of Maya diagrams in Section \ref{sec:MayaDiagrams}. Next, we state the result properly in Section \ref{sec:StatementResult} and prove it in Section \ref{sec:ProofShiftMaya}. The proof is a straightforward generalization of the argument given in \cite{GomezUllate_Grandati_Milson-a} for the case of Exceptional Hermite polynomials which consist of 1 Maya diagram. Finally we consider Lemma \ref{lem:ConPar} as a special case of our theorem, see Section \ref{sec:SpecialCase}. 

\begin{remark}
The result we prove in this section, see Theorem \ref{thm:ShiftMaya} below, also holds true in the Hermite case \cite{GomezUllate_Grandati_Milson-a} and in the Laguerre case \cite[Section 4]{Bonneux_Kuijlaars}. Recently, the authors which prove the result for the Hermite case extend their work to the Laguerre and Jacobi case as well, see \cite{GomezUllate_Grandati_Milson-L+J}. Therefore our result coincides with identity (136) in that paper.
\end{remark}

\subsection{Maya diagrams}\label{sec:MayaDiagrams}
We recall the necessary concepts of Maya diagrams to present our result. 

\paragraph{Definition Maya diagram.}
A Maya diagram $M$ is a subset of the integers that contains a finite number of positive integers and excludes a finite number of negative integers. We visualize it as an infinite row of boxes which are empty or filled. 
We order these boxes by corresponding them to the set of integers and therefore we define an origin. To the right of the origin, there are only finitely many filled boxes. Each of these filled boxes corresponds to a non-negative integer $a\geq0$. All filled boxes to the right of the origin are labeled by a finite
decreasing  sequence 
\begin{equation*}
	a_1 > a_2 > \cdots > a_{r_1} \geq 0
\end{equation*}
where $r_1$ is the number of filled boxes to the right of the origin. 

To the left of the origin, there are only finitely many empty boxes. Each empty box corresponds to a negative integer $k<0$. We link this negative integer to a non-negative integer $a' = -k-1 \geq 0$. We obtain
a second finite decreasing sequence 
\begin{equation*}
	a_1' >  a_2' >  \cdots > a_{r_4}' \geq 0
\end{equation*}
where $r_4$ is the number of those boxes. These numbers label the positions of the empty boxes to the left of the origin. The Maya diagram is encoded by these two sequences
\begin{equation} \label{eq:Mayacoding}
	M : \quad \left( a_1', a_2', \ldots, a_{r_4}' \mid a_1, a_2, \ldots, a_{r_1} \right).
\end{equation}

\paragraph{Equivalent Maya diagrams.}
We say that a Maya diagram $\widetilde{M}$ is equivalent to $M$ if $\widetilde{M}$ is  
obtained from $M$ by moving the position of the origin, i.e., $\widetilde{M}=M-t$ where $t$ denotes the shift of the origin. Hence, the sequence of filled and empty boxes remain unchanged. We specify two canonical choices.

\begin{itemize}
	\item[1.]
	We can put the origin such that all boxes to the left are filled, while the first box to the right is empty. Since there are no empty boxes on the left, we only have 1 decreasing sequence, say
	\begin{equation}\label{eq:MayaCanonicalFormA}
		\widetilde{M} : \quad \left(\emptyset \mid n_1, n_2, \ldots, n_r \right)
	\end{equation}
	with $n_r \geq 1$. This strictly decreasing sequence $(n_i)_{i=1}^{r}$ is associated with a partition $\lambda$ as before, we denote it by  
	\begin{equation} \label{eq:lambdaM} 
		\lambda = \lambda(M). 
	\end{equation}
	Moreover, we use 
	\begin{equation} \label{eq:tM}
		t = t(M) 
	\end{equation}
	to denote the shift $M \to \widetilde{M} = M+t$ that we apply to $M$ to shift it into this canonical form. If $M$ is encoded by \eqref{eq:Mayacoding} then
	\begin{equation*}
	t(M) 
		=\begin{cases}
			a_1' + 1	& r_4>0, \\
			-\max\{i \mid a_i=i-1\} & r_4 = 0 \text{ and } a_1=0, \\
			0	&	r_4 = 0 \text{ and } a_1\neq 0.
		\end{cases}
	\end{equation*}
	
	\item[2.]
	The origin can also be placed so that all the boxes to the right are empty,	and the first box to the left is filled. This has an encoding 
	\begin{equation}\label{eq:MayaCanonicalFormB}
		\widetilde{M} : \quad (n_1', n_2', \ldots, n_s' \mid \emptyset)
	\end{equation}
	with $n_s' \geq 1$. Then 
	\begin{equation*}
		\lambda'_j = n_j' - s + j, \qquad j =1, \ldots,s,
	\end{equation*}
	is the partition that is conjugate to $\lambda(M)$.
\end{itemize}

\subsection{Formulation of the result with Maya diagrams}\label{sec:StatementResult}
Consider the polynomial $\Psi$ defined in \eqref{eq:Omega}. We redefine this polynomial using Maya diagrams. We set
\begin{equation} \label{eq:M1M2}
	\begin{aligned}
		M_1: & \quad \left( n'_1,\dots,n'_{r_4} \mid n_1,\dots,n_{r_1} \right), \\
		M_2: & \quad \left( m'_1,\dots,m'_{r_3} \mid m_1,\dots,m_{r_2} \right),
	\end{aligned}
\end{equation} 
as two Maya diagrams built out of the degrees of the Jacobi polynomials appearing in \eqref{app:fj1}-\eqref{app:fj4}. We rewrite $\Psi$ as
\begin{equation}\label{eq:OmegaM1M2}
	\Omega^{(\alpha,\beta)}_{M_1,M_2}
		= (1+x)^{(\beta+r_1+r_3)(r_2+r_4)}(1-x)^{(\alpha+r_1+r_2)(r_3+r_4)} 
		 \cdot \Wr \left[ f_1, f_2, \ldots, f_r \right]. 
\end{equation}
We set $\widehat{\Omega}^{(\alpha,\beta)}_{M_1,M_2}$ as its monic variant. 

Without any conditions for the parameters, the function $\Omega^{(\alpha,\beta)}_{M_1,M_2}$ can vanish, we therefore impose the following conditions.

\begin{itemize}	
	\item[1.] \textbf{No degree reduction} \\
	We want to make sure the Jacobi polynomials do not have any degree reduction, the following conditions are necessary and sufficient,
	\begin{equation}\label{MayaCondition2GJP}
	\begin{aligned}
		\alpha+\beta+n_i &\notin \{-1,-2,\dots,-n_i\},		&&\qquad i=1,\dots,r_1, \\
		\alpha-\beta+m_i &\notin \{-1,-2,\dots,-m_i\},		&&\qquad i=1,\dots,r_2, \\ 
		-\alpha+\beta+m_i' &\notin \{-1,-2,\dots,-m_i'\},	&&\qquad i=1,\dots,r_3, \\
		-\alpha-\beta+n_i' &\notin \{-1,-2,\dots,-n_i'\},	&&\qquad i=1,\dots,r_4. 
	\end{aligned}
	\end{equation}

	\item[2.] \textbf{Independent eigenfunctions} \\
	Under conditions \eqref{MayaCondition2GJP}, the Wronskian in \eqref{eq:OmegaM1M2} consists of independent eigenfunctions if and only if
	\begin{equation}\label{MayaCondition1GJP}
	\begin{aligned}
		&\beta \neq m_j-n_i, 	&&\qquad i=1,\dots,r_1 \text{ and } j=1,\dots,r_2, \\
		&\beta \neq n_j'-m_i', 	&&\qquad i=1,\dots,r_4 \text{ and } j=1,\dots,r_3, \\
		&\alpha \neq m_j'-n_i, 	&&\qquad i=1,\dots,r_1 \text{ and } j=1,\dots,r_3, \\
		&\alpha \neq n_j'-m_i, 	&&\qquad i=1,\dots,r_4 \text{ and } j=1,\dots,r_2.
	\end{aligned}	
	\end{equation}
	This follows directly as then all eigenpolynomials have a different degree in the broad sense, $\deg\left( (1-x)^{\alpha} (1+x)^{\beta} P^{(\alpha,\beta)}_{m}\right) = \deg\left( P^{(\alpha,\beta)}_{m}\right)+\alpha+\beta$.
\end{itemize}

Now we can state our result in detail. Note that the theorem gives that $\Omega^{(\alpha,\beta)}_{M_1,M_2}$ is a polynomial.

\begin{theorem}\label{thm:ShiftMaya}
Let $f_1, \ldots, f_r$ be as in  \eqref{app:fj1}-\eqref{app:fj4}, and let $M_1,M_2$ be two Maya diagrams built of the degrees as in \eqref{eq:M1M2}. Let $\lambda = \lambda(M_1)$ and $\mu = \lambda(M_2)$ be the two partitions that are associated with $M_1$ and $M_2$, see \eqref{eq:lambdaM}, and let $t_1 = t(M_1)$, $t_2 = t(M_2)$, see \eqref{eq:tM}. Take $\alpha,\beta\in\mathbb{R}$ such that the conditions \eqref{MayaCondition2GJP} and \eqref{MayaCondition1GJP} are satisfied.
Then 
\begin{equation*}
	\widehat{\Omega}^{(\alpha,\beta)}_{M_1,M_2}
		= \widehat{\Omega}^{(\alpha-t_1-t_2,\beta-t_1+t_2)}_{\lambda, \mu}.
\end{equation*}
\end{theorem}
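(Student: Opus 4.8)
The goal is to show that a Wronskian built from all four families of quasi-rational eigenfunctions in Table~\ref{tab:1} collapses, after an appropriate shift of parameters, to one built from only the first two families, as encoded by the canonical forms of the two Maya diagrams $M_1,M_2$. The natural strategy is to reduce the statement to two independent ``elementary moves'' on each Maya diagram, and then iterate. Concretely, I would first establish a single-step lemma: if one moves the origin of $M_1$ by one unit (equivalently, flips one pair of adjacent filled/empty boxes across the origin), the corresponding $\Omega^{(\alpha,\beta)}_{M_1,M_2}$ changes only by a nonzero constant and a shift $(\alpha,\beta)\mapsto(\alpha-1,\beta-1)$ coming from the first Maya diagram, while a one-unit shift of the origin of $M_2$ produces a shift $(\alpha,\beta)\mapsto(\alpha-1,\beta+1)$. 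Composing $t_1$ such moves on $M_1$ and $t_2$ on $M_2$ yields exactly the total shift $(\alpha-t_1-t_2,\ \beta-t_1+t_2)$ in the statement, and the monic normalization absorbs all the accumulated nonzero constants. The bookkeeping of the signs in the parameter shift matches the asymmetric roles of $\alpha$ (tied to $1-x$, i.e.\ both $M$'s) and $\beta$ (tied to $1+x$, with opposite sign for the two diagrams), which is consistent with Lemma~\ref{lem:lemma4}.

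**Key steps in order.** (1) Rewrite $\Omega^{(\alpha,\beta)}_{M_1,M_2}$ so that the prefactors are pulled inside the Wronskian using the identity \eqref{eq:Wr1}: distributing $(1-x)^{\alpha}$ and $(1+x)^{\beta}$ (or their reciprocals) over the columns turns the four eigenfunction types into four groups of the single-type shape $(1-x)^{a}(1+x)^{b}P^{(\pm\alpha,\pm\beta)}_{\bullet}(x)$ with integer-normalized exponents; this is where the Maya-diagram interpretation of ``which integers are filled'' becomes transparent. (2) Prove the one-step move: translating the origin of $M_1$ by one box corresponds to multiplying the whole Wronskian by a common factor $(1\mp x)^{\pm 1}$ — using \eqref{eq:Wr1} again — together with the Jacobi contiguous/derivative relations \eqref{eq:JacobiDerivative} that let one re-express a shifted-parameter Jacobi polynomial in terms of the neighbors; the degree statement of Proposition~\ref{prop:degwr} guarantees the resulting object is again a nonvanishing polynomial of the expected degree under \eqref{MayaCondition2GJP}–\eqref{MayaCondition1GJP}, so no information is lost. (3) Iterate the move $|t_1|$ times on $M_1$ and $|t_2|$ times on $M_2$ to bring both diagrams into the canonical form \eqref{eq:MayaCanonicalFormA}, at which point $r_3=r_4=0$ and the Wronskian is literally the one defining $\Omega^{(\alpha-t_1-t_2,\beta-t_1+t_2)}_{\lambda,\mu}$; since each step only changed things by a nonzero scalar, passing to monic polynomials gives equality. (4) Check that the prefactor exponents in \eqref{eq:OmegaM1M2} track correctly through each step — i.e.\ that $(\beta+r_1+r_3)(r_2+r_4)$ and $(\alpha+r_1+r_2)(r_3+r_4)$ degenerate to the single prefactor $(1+x)^{(\beta'+r_1)r_2}$ of \eqref{eq:OmegaLaMu} once $r_3=r_4=0$ and $\beta$ is replaced by $\beta'=\beta-t_1+t_2$.

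**Main obstacle.** The delicate part is the single-step move in step~(2): one must verify that sliding the origin does not merely permute columns but genuinely converts the Wronskian into the same Wronskian with shifted parameters, up to a scalar. This requires care because moving the origin past a box can change which of the four types a given column belongs to (a filled box crossing the origin from right to left turns a ``type-1'' column into a ``type-4'' column, etc.), so one needs the identities relating $P^{(\alpha,\beta)}_n$, $(1+x)^{-\beta}P^{(\alpha,-\beta)}_m$, and the like under $\alpha\mapsto\alpha\pm1$, $\beta\mapsto\beta\pm1$, applied columnwise under the multilinearity of the determinant. Keeping precise track of the exponents of $(1\pm x)$ that get pulled in and out at each step — and confirming that the net prefactor exponent remains a non-negative integer so that polynomiality is preserved throughout (Proposition~\ref{prop:degwr}) — is the bulk of the work; the translation of the exceptional-Hermite argument of \cite{GomezUllate_Grandati_Milson-a}, which has only one Maya diagram, to two essentially decoupled diagrams here is otherwise routine, since the $M_1$-moves and $M_2$-moves act on disjoint blocks of columns and commute.
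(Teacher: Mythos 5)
Your overall architecture is the same as the paper's: reduce the theorem to one-box shifts of the origin of each Maya diagram (a one-step lemma giving the shifts $(\alpha,\beta)\mapsto(\alpha-1,\beta-1)$ for $M_1$ and $(\alpha-1,\beta+1)$ for $M_2$), iterate $t_1$ and $t_2$ times until both diagrams reach the canonical form \eqref{eq:MayaCanonicalFormA}, and let the monic normalization absorb the constants; this is exactly Lemma \ref{lem:MayaShiftOneStep} followed by the iteration in Section \ref{sec:ProofShiftMaya}. However, there is a genuine gap in your step (2), and it shows in your remark that ``a filled box crossing the origin from right to left turns a type-1 column into a type-4 column.'' Under the conventions \eqref{eq:Mayacoding} and \eqref{eq:M1M2}, filled boxes to the left of the origin and empty boxes to the right contribute no function at all; when the origin moves by one box, a column of the Wronskian is created or destroyed, so the two sides of the one-step identity are Wronskians of \emph{different} orders ($r$ versus $r\pm1$). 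Consequently the move cannot be realized, as you propose, by pulling a common factor $(1\mp x)^{\pm1}$ through the determinant via \eqref{eq:Wr1} and rewriting columns with contiguous relations: those are same-size column operations and can never change the order of the determinant.

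The missing ingredient is the identity that bridges the two orders: up to a sign, a Wronskian one of whose entries is the constant function $1$ equals the Wronskian of the \emph{derivatives} of the remaining entries,
\begin{equation*}
	\Wr[g_1,\dots,g_k,1] = (-1)^{k}\,\Wr[g_1',\dots,g_k'],
\end{equation*}
combined with the four first-order derivative formulas \eqref{eq:diff1}, \eqref{eq:diff3} and their two unnumbered companions, which show that differentiation sends each of the four eigenfunction types to an eigenfunction of the \emph{same} type with both parameters shifted by one (and degree shifted by $-1$, $0$, $0$, $+1$ respectively). This is what simultaneously removes (or inserts) the column corresponding to the box adjacent to the origin and re-indexes all remaining columns according to $M_1\pm1$ or $M_2\pm1$ with the shifted parameters; the role of \eqref{eq:Wr1} is only auxiliary, namely to factor powers of $(1\pm x)$ in and out so that the degree-zero eigenfunction at the origin becomes the constant $1$ and so that the prefactors match afterwards. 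One also needs the small case distinction, per step, between a filled and an empty box adjacent to the origin (using the appropriate one of the four one-step identities, read forwards or backwards), noting that both cases shift the parameters in the same direction. With this mechanism supplied, the rest of your plan — iteration, commutation of the $M_1$- and $M_2$-moves, and the prefactor bookkeeping — goes through as in the paper; without it, the one-step lemma as you conceive it (same-size Wronskians related by common factors and contiguous relations) is not provable.
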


As the result is formulated in terms of monic polynomials, it is also true for the non-monic polynomials although we then need a constant to establish the identity. Contrary to the Laguerre case, see \cite{Bonneux_Kuijlaars}, we do not formulate an explicit formula for this constant.

\subsection{Proof of Theorem \ref{thm:ShiftMaya}}\label{sec:ProofShiftMaya}
The main idea for this proof is that we shift both Maya diagrams $M_1$ and $M_2$ to its canonical form A, see \eqref{eq:MayaCanonicalFormA}. This transformation leads to a shift in both parameters which we explain in the following lemma. We formulate how the polynomial \eqref{eq:OmegaM1M2} changes when the origin in a Maya diagram shifts by 1 step and then, we use these results repeatedly to obtain the result of Theorem \ref{thm:ShiftMaya}. 

\begin{lemma}\label{lem:MayaShiftOneStep}
Let $M_1$ and $M_2$ be given by \eqref{eq:M1M2} and take $\alpha,\beta\in\mathbb{R}$ such that the conditions \eqref{MayaCondition2GJP} and \eqref{MayaCondition1GJP} are satisfied. 
\begin{enumerate}
	\item[\rm (a)] If $n_{r_1}=0$, then 
	\begin{equation} \label{eq:reduction1}
		\widehat{\Omega}^{(\alpha,\beta)}_{M_1,M_2}
		= \widehat{\Omega}^{(\alpha+1,\beta+1)}_{M_1-1,M_2}.
	\end{equation}
	\item[\rm (b)] If $n'_{r_4}=0$, then
	\begin{equation} \label{eq:reduction4}
		\widehat{\Omega}^{(\alpha,\beta)}_{M_1,M_2}
		= \widehat{\Omega}^{(\alpha-1,\beta-1)}_{M_1+1,M_2}.
	\end{equation}
	\item[\rm (c)]  If $m_{r_2} = 0$, then
	\begin{equation} \label{eq:reduction2}
		\widehat{\Omega}^{(\alpha,\beta)}_{M_1,M_2}
		= \widehat{\Omega}^{(\alpha+1,\beta-1)}_{M_1,M_2-1}.
	\end{equation}
	\item[\rm (d)]  If $m'_{r_3} = 0$, then
	\begin{equation} \label{eq:reduction3}
		\widehat{\Omega}^{(\alpha,\beta)}_{M_1,M_2}
		= \widehat{\Omega}^{(\alpha-1,\beta+1)}_{M_1,M_2+1}.
	\end{equation}
\end{enumerate}
\end{lemma}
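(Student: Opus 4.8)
The plan is to prove all four identities by a single device: whenever a Maya diagram carries a seed function of degree $0$, that seed function is, up to a non-zero constant, one of the four elementary quasi-rational functions $P^{(\alpha,\beta)}_0=1$, $(1+x)^{-\beta}$, $(1-x)^{-\alpha}$, $(1-x)^{-\alpha}(1+x)^{-\beta}$ (the degree-$0$ representatives of types $1,2,3,4$ in Table~\ref{tab:1}), and such a function can be removed from the Wronskian, lowering its order by one. For part (a), since $n_{r_1}=0$ the entry $f_{r_1}$ equals the non-zero constant $P^{(\alpha,\beta)}_0$; moving it to the first slot (a sign that is irrelevant for monic polynomials) and expanding the determinant along that column gives $\Wr[f_1,\dots,f_r]=\pm P^{(\alpha,\beta)}_0\cdot\Wr[f_1',\dots,\widehat{f_{r_1}'},\dots,f_r']$, where $\widehat{\phantom{x}}$ denotes omission. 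For parts (b), (c), (d) the removed entry is $c\,\psi$ with $\psi$ equal to $(1-x)^{-\alpha}(1+x)^{-\beta}$, $(1+x)^{-\beta}$, $(1-x)^{-\alpha}$ respectively, and there one uses the Wronskian identity $\Wr[\psi,g_1,\dots,g_{r-1}]=\psi^{\,r}\,\Wr[(g_1/\psi)',\dots,(g_{r-1}/\psi)']$ instead.

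The next step is to identify the surviving entries. Besides the differentiation rules \eqref{eq:JacobiDerivative}, one needs their companions --- the derivatives of $(1-x)^{-\alpha}P^{(-\alpha,\beta)}_m$, of $(1-x)^{-\alpha}(1+x)^{-\beta}P^{(-\alpha,-\beta)}_n$, and of the mixed products $f_j/\psi$ that occur above --- all of which follow from Rodrigues' formula \eqref{eq:Rodrigues} (equivalently from \eqref{eq:JacobiDerivative} via $x\mapsto-x$ and \eqref{eq:Jacobi-x}). A direct check then shows that, after extraction of $\psi$, the derivative of each of the four types of seed function of parameters $(\alpha,\beta)$ is, up to a constant, the corresponding type of seed function of the shifted parameters, multiplied by one and the same factor $(1-x)^{a}(1+x)^{b}$: for (a) the shift is $(\alpha,\beta)\mapsto(\alpha+1,\beta+1)$ with $(a,b)=(0,0)$; for (b), $(\alpha-1,\beta-1)$ with $(a,b)=(\alpha-1,\beta-1)$; for (c), $(\alpha+1,\beta-1)$ with $(a,b)=(0,\beta-1)$; for (d), $(\alpha-1,\beta+1)$ with $(a,b)=(\alpha-1,0)$. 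Extracting this common factor with \eqref{eq:Wr1}, the remaining Wronskian is exactly the Wronskian of the seed functions, with the shifted parameters, attached to the shifted Maya diagrams --- e.g.\ in (a) the degrees $n_i$ become $n_i-1$ and the $n'_j$ become $n'_j+1$, which is precisely the effect of $M_1\mapsto M_1-1$ on the coding \eqref{eq:Mayacoding}, while $M_2$ is untouched; the other three cases behave analogously.

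It remains to track the prefactors. Gathering the scalar $c$ and the constants produced by the differentiation rules, the monomial $\psi^{\,r}$, the monomial $\bigl((1-x)^{a}(1+x)^{b}\bigr)^{r-1}$ from the last step, and the prefactor of the definition \eqref{eq:OmegaM1M2}, and then comparing with the prefactor of the shifted $\Omega$ --- whose exponents $(\beta+r_1+r_3)(r_2+r_4)$ and $(\alpha+r_1+r_2)(r_3+r_4)$ are now evaluated at the shifted data ($r_1\to r_1-1$ in (a), $r_4\to r_4-1$ in (b), $r_2\to r_2-1$ in (c), $r_3\to r_3-1$ in (d), together with $\alpha\pm1$, $\beta\pm1$) --- one finds that every power of $1-x$ and of $1+x$ cancels, so that $\Omega^{(\alpha,\beta)}_{M_1,M_2}$ equals the shifted $\Omega$ times a non-zero constant. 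Passing to monic polynomials gives \eqref{eq:reduction1}--\eqref{eq:reduction3}. The hypotheses \eqref{MayaCondition2GJP}--\eqref{MayaCondition1GJP} are used only to guarantee that none of the Wronskians involved degenerates, so that this proportionality can be upgraded to equality of monic polynomials; they are inherited by the shifted data.

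The part I expect to be most delicate is the bookkeeping in the second and third steps: verifying that, after dividing out $\psi$, the derivatives of all four seed types really do share a \emph{single} common factor $(1-x)^{a}(1+x)^{b}$ --- this is exactly what keeps the second Maya diagram and one of the two parameters essentially fixed and forces the shift onto the first diagram alone --- and then the exponent arithmetic for the prefactors, where one has to keep track of how $r_1,r_2,r_3,r_4$ change and confirm that the three extracted monomials precisely balance the change in the definitional prefactor. None of this is conceptually hard, but it is the place where a sign or an index slip would go unnoticed.
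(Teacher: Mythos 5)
Your proposal is correct and follows essentially the same route as the paper: remove the degree-zero seed from the Wronskian (the constant-column expansion in case (a), and the equivalent lowering identity with $\psi$ in cases (b)--(d), which the paper handles by saying the remaining cases are "proven in a similar fashion"), then use the four first-derivative identities \eqref{eq:diff1}--\eqref{eq:diff3} to recognize the shifted seed functions, balance the $(1\pm x)$ prefactors, and conclude by monicity. Your explicit bookkeeping of the common factor $(1-x)^a(1+x)^b$ and of the prefactor exponents in cases (b)--(d) checks out and merely spells out what the paper leaves implicit.
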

\begin{proof}
The four items are proven in a similar fashion and therefore we only give the proof of part (a). See also the proof of Lemma 9 in \cite{Bonneux_Kuijlaars} for the Laguerre case.

Under conditions \eqref{MayaCondition2GJP} and \eqref{MayaCondition1GJP}, the polynomial $\widehat{\Omega}^{(\alpha,\beta)}_{M_1,M_2}$ does not vanish. Now, if $n_{r_1}=0$, then $M_1-1$ is encoded by
\begin{equation*}
	M_1-1: \quad \left( n_1'+1, n_2'+1, \ldots, n_{r_4}'+1 \mid n_1-1, n_2-1, \ldots, n_{r_1-1}-1 \right).
\end{equation*}
Therefore, the conditions \eqref{MayaCondition1GJP} in terms of $\widehat{\Omega}^{(\alpha+1,\beta+1)}_{M_1-1,M_2}$ are satisfied too. Moreover, there is also no degree reduction for the Jacobi polynomials, i.e., after an elementary calculation and using $n_{r_1}=0$ we find that \eqref{MayaCondition2GJP} in terms of $\widehat{\Omega}^{(\alpha+1,\beta+1)}_{M_1-1,M_2}$ is valid too. Hence, the polynomial $\widehat{\Omega}^{(\alpha+1,\beta+1)}_{M_1-1,M_2}$ does not vanish either.

To establish the identity \eqref{eq:reduction1}, one uses
\begin{equation*}
	\Wr[f_1,\dots,f_{r_1-1}, 1, f_{r_1+1} ,\dots, f_r]	
		= (-1)^{r_1+1} \Wr[f'_1,\dots,f'_{r_1-1}, f'_{r_1+1} ,\dots, f'_r]
\end{equation*}
where the (general) derivatives for \eqref{app:fj1}-\eqref{app:fj4} are given by
\begin{align}
	&\frac{d}{dx} \left( P^{(\alpha,\beta)}_{n}(x) \right) 
		= \frac{n+\alpha+\beta+1}{2} P^{(\alpha+1,\beta+1)}_{n-1}(x), 
	\label{eq:diff1} \\
	&\frac{d}{dx}\left((1+x)^{-\beta}P^{(\alpha,-\beta)}_{n}(x)\right)
		= (n-\beta) (1+x)^{-\beta-1}P^{(\alpha+1,-\beta-1)}_{n}(x), 
	\nonumber \\
	&\frac{d}{dx}\left((1-x)^{-\alpha}P^{(-\alpha,\beta)}_{n}(x)\right)
		= -(n-\alpha) (1-x)^{-\alpha-1}P^{(-\alpha-1,\beta+1)}_{n}(x), 
	\label{eq:diff3} \\
	&\frac{d}{dx} \left( (1-x)^{-\alpha}(1+x)^{-\beta}P^{(-\alpha,-\beta)}_{n}(x)\right) 
		= -2(n+1) (1-x)^{-\alpha-1}(1+x)^{-\beta-1}P^{(-\alpha-1,-\beta-1)}_{n+1}(x). 
	\nonumber
\end{align}
These identities follow directly from the Rodrigues' formula \eqref{eq:Rodrigues}. Finally, one can take out common factors from each column in the Wronskian such that the prefactors coincide and identity \eqref{eq:reduction1} is obtained. Moreover, we do not need a constant to establish the identity as both polynomials are monic.
\end{proof}	

Lemma \ref{lem:MayaShiftOneStep} gives sufficient information to express $\widehat{\Omega}^{(\alpha,\beta)}_{M_1,M_2}$ in terms of $\widehat{\Omega}^{(\alpha',\beta')}_{\widetilde{M}_1,\widetilde{M}_2}$ where $\widetilde{M}_1,\widetilde{M}_2$ are equivalent Maya diagrams of $M_1$ and $M_2$ respectively. If $\widetilde{M}_1=M_1+t_1$, then one can use $|t_1|$ times identity \eqref{eq:reduction1} or \eqref{eq:reduction4}, and if $\widetilde{M}_2=M_1+t_2$, then one can use $|t_2|$ times identity \eqref{eq:reduction2} or \eqref{eq:reduction3}.

\begin{proof}[Proof of Theorem \ref{thm:ShiftMaya}]
Let $t_1=t(M_1)$. We want to shift the origin of the Maya diagram $M_1$ by $|t_1|$ places such that we end up with $M_1+t_1$. We do this transformation in $|t_1|$ steps, in each step we shift the origin by one place. Assume that $t_1\geq0$. When $t_1$ is negative, a similar reasoning holds true (it would make it even easier).

Consider Maya diagram $M_1$ as in \eqref{eq:M1M2}, there are two scenarios.
%\begin{figure}[h]
%	\centering
%	\begin{tikzpicture}
%	\draw[step=1/2] (-2,0) grid (-1.5,1/2);
%	\draw[thick] (0-1.5,-0.25)--(0-1.5,0.75);
%	\node[] at (0-1.5,-0.5) {$0$};
%	\node[] at (0-1.6,1) {Scenario 1};
%	\node[] at (-1,0.20) {$\cdots$};
%	\node[] at (-2.40,0.20) {$\cdots$};
%	
%	\draw[step=1/2] (1.4999,0) grid (2,1/2);
%	\draw[thick] (0+2,-0.25)--(0+2,0.75);
%	\node[] at (0+2,-0.5) {$0$};
%	\node[] at (0+1.8,1) {Scenario 2};
%	\draw[black,fill=black] (1.75,1/4) circle (.5ex);
%	\node[] at (2.50,0.20) {$\cdots$};
%	\node[] at (1.10,0.20) {$\cdots$};
%	\end{tikzpicture}
%	\vspace{-0.25cm}
%	\caption{Maya diagram of $M_1$.}
%	\label{fig:MayaM1}
%\end{figure}
\begin{enumerate}
	\item[(1)]
	The box to the left of the origin is empty. %see scenario 1 in Figure \ref{fig:MayaM1}. 
	This holds true if and only if $n'_{r_4}=0$.
	
	\item[(2)]
	The box to the left of the origin is filled.% see scenario 2 in Figure \ref{fig:MayaM1}. 
	This holds true if and only if $n'_{r_4}>0$. Stated otherwise, in terms of the Maya diagram $M_1+1$ we have
	\begin{equation*}
		M_1+1: \quad \left( n_1'-1, n_2'-1, \ldots, n_{r_4}'-1 \mid n_1+1, n_2+1, \ldots, n_{r_1}+1,0 \right).
	\end{equation*}
\end{enumerate}

In the first scenario, we apply \eqref{eq:reduction4} for the Maya diagrams $M_1$ and $M_2$. We obtain the identity
\begin{equation*}
	\widehat{\Omega}^{(\alpha,\beta)}_{M_1,M_2}
		= \widehat{\Omega}^{(\alpha-1,\beta-1)}_{M_1+1,M_2}
\end{equation*}
For the second scenario, we apply \eqref{eq:reduction1} for the Maya diagrams $M_1+1$ and $M_2$. We get
\begin{equation*}
	\widehat{\Omega}^{(\alpha-1,\beta-1)}_{M_1+1,M_2}
		= \widehat{\Omega}^{(\alpha,\beta)}_{M_1,M_2}
\end{equation*}
Both scenarios give the same result, the parameters always shift in the same direction (they both decrease by 1). 

Subsequently, we apply the same argument on the Maya diagram $M_1+1$ as either its box to the left of the origin is filled or empty. Next, we do the same for $M_1+2$. Repeating this procedure $t_1$ times gives
\begin{equation}\label{eq:shifting1}
	\widehat{\Omega}^{(\alpha,\beta)}_{M_1,M_2}
		= \widehat{\Omega}^{(\alpha-1,\beta-1)}_{M_1+1,M_2}
		= \cdots
		= \widehat{\Omega}^{(\alpha-t_1,\beta-t_1)}_{M_1+t_1,M_2}.
\end{equation}
Next, let $t_2=t(M_2)$ and assume $t_2\geq0$. A similar argument as before, but now using \eqref{eq:reduction2} and \eqref{eq:reduction3}, gives
\begin{equation}\label{eq:shifting2}
	\widehat{\Omega}^{(\alpha-t_1,\beta-t_1)}_{M_1+t_1,M_2}
		= \widehat{\Omega}^{(\alpha-t_1-1,\beta-t_1+1)}_{M_1+t_1,M_2+1}
		= \cdots
		= \widehat{\Omega}^{(\alpha-t_1-t_2,\beta-t_1+t_2)}_{M_1+t_1,M_2+t_2}
\end{equation}
Hence if we combine \eqref{eq:shifting1} and \eqref{eq:shifting2}, we end up with
\begin{equation*}
	\widehat{\Omega}^{(\alpha,\beta)}_{M_1,M_2}
		=\widehat{\Omega}^{(\alpha-t_1-t_2,\beta-t_1+t_2)}_{M_1+t_1,M_2+t_2}
\end{equation*}
which is the result we wanted to prove.
\end{proof}

\begin{remark} 
The proof of Theorem \ref{thm:ShiftMaya} consists of shifting each Maya diagram to its canonical form \eqref{eq:MayaCanonicalFormA}. However, one could also shift 1 (or both) Maya diagram(s) to the other form \eqref{eq:MayaCanonicalFormB}. Hence, in general one can reduce the polynomial \eqref{eq:OmegaM1M2} to a well-chosen prefactor and Wronskian consisting of functions of the form \eqref{app:fj1} or \eqref{app:fj4} and functions of the form \eqref{app:fj2} or \eqref{app:fj3}, see Section \ref{sec:anotheridentity} for another example.
\end{remark}

\subsection{Proof of Lemma \ref{lem:ConPar}}\label{sec:SpecialCase}
Lemma \ref{lem:ConPar} can be considered as a special case of Theorem \ref{thm:ShiftMaya}.

\begin{proof}[Proof of Lemma \ref{lem:ConPar}]
Take the following Maya diagrams,
\begin{equation} \label{eq:specialM1M2}
	\begin{aligned}
		M_1: & \quad \left( n'_1,\dots,n'_{r_4} \mid \emptyset \right) \\
		M_2: & \quad \left( m'_1,\dots,m'_{r_3} \mid \emptyset \right)
	\end{aligned}
\end{equation} 
where $n'_{r_4}\neq 0$ if $r_4 \geq 1$ and $m'_{r_3}\neq 0$ if $r_3 \geq 0$. Let $\lambda'$ denote the conjugated partition of $\lambda:=\lambda(M_1)$ and similarly for $\mu'$ and $\mu:=\lambda(M_2)$. Set $r(\lambda)$ and $r(\mu)$ as the length of the partitions $\lambda$ and $\mu$.

In the case of \eqref{eq:specialM1M2}, the polynomial \eqref{eq:OmegaM1M2} simplifies to
\begin{equation*}
	\Omega^{(\alpha,\beta)}_{M_1, M_2}(x) 
		= (1+x)^{(\beta+r_3)r_4} (1-x)^{\alpha(r_3+r_4)} \cdot \Wr[f_1,\dots,f_r] 
\end{equation*}
where $r=r_3+r_4$ and $f_1, \ldots, f_r$  are as in \eqref{app:fj3}-\eqref{app:fj4} with $r_1=r_2=0$. As all terms in the Wronskian have a common factor $(1-x)^{-\alpha}$, this factor can be taken outside the Wronskian. In fact, it cancels out the second part of the prefactor. Hence, in terms of definition \eqref{eq:OmegaLaMu}, we directly have  
\begin{equation}\label{eq:conj0}
	\Omega^{(\alpha,\beta)}_{M_1, M_2}
		= \Omega^{(-\alpha,\beta)}_{\mu',\lambda'}.
\end{equation}
If we now consider the monic polynomials and apply \eqref{eq:duality} to the right hand side of \eqref{eq:conj0}, we get
\begin{equation}\label{eq:conj1}
	\widehat{\Omega}^{(\alpha,\beta)}_{M_1, M_2}
		= \widehat{\Omega}^{(-\alpha,-\beta)}_{\lambda',\mu'}.
\end{equation}
Next, we apply Theorem \ref{thm:ShiftMaya} to the left hand side of \eqref{eq:conj1}, we find 
\begin{equation*}
	\widehat{\Omega}^{(\alpha-t_1-t_2,\beta-t_1+t_2)}_{\lambda,\mu}
		= \widehat{\Omega}^{(-\alpha,-\beta)}_{\lambda',\mu'}
\end{equation*}
where
\begin{align*}
	&t_1 = n'_1+1 = \lambda_1+r(\lambda), \\
	&t_2 = m'_1+1= \mu_1+r(\mu).
\end{align*}
This is the result of Lemma \ref{lem:ConPar}, up to a translation of the parameters.

The imposed conditions for the parameters guarantee that both polynomials in \eqref{eq:ConjugatedPartitions} have full degree and moreover, all the above steps are valid for the permitted parameters. This ends the proof.
\end{proof}

\section{Construction of the exceptional Jacobi polynomial}\label{sec:ConstructionXJP}
In this section we revisit exceptional Jacobi polynomials and discuss $X_m$-Jacobi polynomials.

\subsection{Exceptional Jacobi polynomials revisited}\label{sec:XJPrevisited}
We defined the exceptional Jacobi polynomial \eqref{def:XJP1} as a Wronskian, consisting of a fixed set of functions and 1 extra Jacobi polynomial, and an appropriate prefactor. In the light of Section \ref{sec:ConstructionGJP}, we could generalize our definition to
\begin{equation}\label{eq:XJPrevisited}
	c(x) \cdot \Wr[f_1,\dots,f_r,g]
\end{equation}
where
\begin{itemize}
	\item[(a)]
	 the functions $f_1,\dots,f_r$ are as in \eqref{app:fj1}-\eqref{app:fj4}, and not as in \eqref{eq:fj1}-\eqref{eq:fj2},
	 
	 \item[(b)]
	 the function $g$ is an eigenfunction as listed in Table \ref{tab:1}, and not just a Jacobi polynomial; see also \eqref{def:XJP2},
	 
	 \item[(c)]
	 the prefactor $c(x)$ is updated so that we obtain a polynomial.
\end{itemize}
However, for each choice we make in item (b), the polynomial \eqref{eq:XJPrevisited} can be reduced to our definition \eqref{def:XJP1} as described in the following proposition. Recall the convention that $\lambda_1=0$ if $r_1=0$ and $\mu_1=0$ if $r_2=0$.

\begin{proposition}\label{prop:XJPrevisited}
Let $f_1,\dots,f_r$ be as in \eqref{app:fj1}-\eqref{app:fj4} and take two Maya diagrams $M_1,M_2$ as in \eqref{eq:M1M2}. Take $\alpha,\beta\in\mathbb{R}$ such that the conditions \eqref{MayaCondition2GJP} and \eqref{MayaCondition1GJP} are satisfied. Let $\lambda = \lambda(M_1)$ and $\mu = \lambda(M_2)$ be the partitions that are associated with these Maya diagrams and let $r(\lambda)$ and $r(\mu)$ denote the length of these partitions. Let $t_1 = t(M_1)$, $t_2 = t(M_2)$ as in \eqref{eq:tM}.
\begin{enumerate}
	\item[\rm (a)] 
	Take $s \geq 0$ such that $\deg P^{(\alpha,\beta)}_{s} = s$. \\ 
	If $s\neq n_i$ for $i=1,\dots,r_1$, then $n:=s+t_1+|\lambda|+|\mu|-r(\lambda) \in\mathbb{N}_{\lambda,\mu}$ and
	\begin{multline*}
		(1+x)^{(\beta+r_1+1+r_3)(r_2+r_4)}(1-x)^{(\alpha+r_1+1+r_2)(r_3+r_4)}  \\
		\times \Wr[f_1,\dots,f_r,P^{(\alpha,\beta)}_{s}]
		 = C_1 P^{(\alpha-t_1-t_2,\beta-t_1+t_2)}_{\lambda,\mu,n}
	\end{multline*}
	for some constant $C_1\in\mathbb{R}$.
		
	\item[\rm (b)]  
	Take $s \geq 0$ such that $\deg P^{(\alpha,-\beta)}_{s} = s$. \\
	If $s\neq m_i$ for $i=1,\dots,r_2$, then $n:=s+t_2+|\lambda|+|\mu|-r(\mu) \in\mathbb{N}_{\mu,\lambda}$ and  
	\begin{multline*}
		(1+x)^{(\beta+r_1+r_3)(r_2+1+r_4)}(1-x)^{(\alpha+r_1+r_2+1)(r_3+r_4)}  \\
		\times \Wr[f_1,\dots,f_r,(1+x)^{-\beta}P^{(\alpha,-\beta)}_{s}]
		 = C_2 P^{(\alpha-t_1-t_2,-\beta+t_1-t_2)}_{\mu,\lambda,n}
	\end{multline*}
	for some constant $C_2\in\mathbb{R}$.
		
	\item[\rm (c)]  
	Take $s \geq 0$ such that $\deg P^{(-\alpha,\beta)}_{s} = s$. \\
	If $s\neq m'_i$ for $i=1,\dots,r_3$, then $n:=s+t_2+|\lambda|+|\mu|-r(\mu') \in\mathbb{N}_{\mu',\lambda'}$ and 
	\begin{multline*}
		(1+x)^{(\beta+r_1+r_3+1)(r_2+r_4)}(1-x)^{(\alpha+r_1+r_2)(r_3+1+r_4)}  \\
		\times \Wr[f_1,\dots,f_r,(1-x)^{-\alpha}P^{(-\alpha,\beta)}_{s}]
		 = C_3 P^{(-\alpha',\beta')}_{\mu',\lambda',n}
	\end{multline*}
	where $\alpha'= \alpha + (\lambda_1-r_1)+(\mu_1-r_2)$ and $\beta'=\beta + (\lambda_1-r_1) - (\mu_1-r_2)$ and for some constant $C_3\in\mathbb{R}$.
		
	\item[\rm (d)] 
	Take $s \geq 0$ such that $\deg P^{(-\alpha,-\beta)}_{s} = s$. \\
	If $s\neq n'_i$ for $i=1,\dots,r_4$, then $n:=s+t_1+|\lambda|+|\mu|-r(\lambda') \in\mathbb{N}_{\lambda',\mu'}$ and  
	\begin{multline*}
		(1+x)^{(\beta+r_1+r_3)(r_2+r_4+1)}(1-x)^{(\alpha+r_1+r_2)(r_3+r_4+1)}  \\ \times \Wr[f_1,\dots,f_r,(1-x)^{-\alpha}(1+x)^{-\beta}P^{(-\alpha,-\beta)}_{s}]
		 = C_4 P^{(-\alpha',-\beta')}_{\lambda',\mu',n}
	\end{multline*}
	where $\alpha'= \alpha + (\lambda_1-r_1)+(\mu_1-r_2)$ and $\beta'=\beta + (\lambda_1-r_1) - (\mu_1-r_2)$ and for some constant $C_4\in\mathbb{R}$.
\end{enumerate}
Each Wronskian in the above identities can vanish, as we need more conditions for $s$ to obtain that the Wronskian consists of independent entries. In case of a vanishing Wronskian, $C_i=0$. If the Wronskian does not vanish, $C_i \neq 0$.
\end{proposition}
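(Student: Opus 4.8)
The plan is to recognise each of the four left-hand sides as a generalised Jacobi polynomial $\Omega^{(\alpha,\beta)}_{M_1',M_2'}$ attached to a pair of \emph{enlarged} Maya diagrams, to push it into canonical form by Theorem~\ref{thm:ShiftMaya}, and then to read the outcome off as an exceptional Jacobi polynomial. The bridge for the last step is the remark made just after \eqref{eq:XJPexample}: $P^{(\gamma,\delta)}_{\lambda,\mu,n}$ equals, up to a constant, the generalised Jacobi polynomial whose first partition is $\lambda$ with one extra part, the part that records the degree of the adjoined Jacobi polynomial; together with the duality Lemma~\ref{lem:lemma4} this also handles the case where the extra part sits on the second partition. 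Parts (a)--(d) are then the same argument applied to the four groups \eqref{app:fj1}--\eqref{app:fj4}.

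For part (a) the seed $P^{(\alpha,\beta)}_{s}$ is of type \eqref{app:fj1}, so adjoining it to $\Wr[f_1,\dots,f_r]$ is the same as inserting $s$ into the list of filled boxes of $M_1$; this is legitimate precisely because $s\neq n_i$, and it produces a Maya diagram $M_1^{+}$ with $r_1$ raised by one. Matching the prefactor of (a) with that of \eqref{eq:OmegaM1M2} written for $(M_1^{+},M_2)$ shows that the left-hand side of (a) is exactly $\Omega^{(\alpha,\beta)}_{M_1^{+},M_2}$. Applying Theorem~\ref{thm:ShiftMaya} gives $\widehat{\Omega}^{(\alpha-t(M_1^{+})-t_2,\ \beta-t(M_1^{+})+t_2)}_{\lambda(M_1^{+}),\,\mu}$. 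Inserting a filled box to the right of the origin does not move the leftmost empty box, so $t(M_1^{+})=t_1$, and $\lambda(M_1^{+})$ is the partition whose strictly decreasing sequence is that of $\lambda$ enlarged by $\sigma:=s+t_1$, the position of the adjoined box in the canonical form of $M_1$. By the bridge above this is $\widehat{P}^{(\alpha-t_1-t_2,\ \beta-t_1+t_2)}_{\lambda,\mu,n}$ once $\sigma=n-|\lambda|-|\mu|+r(\lambda)$ is solved for $n$, i.e.\ $n=s+t_1+|\lambda|+|\mu|-r(\lambda)$. Since $s\ge 0$ we get $n\ge |\lambda|+|\mu|-r(\lambda)$, and since $s\neq n_i$ we get $n-|\lambda|-|\mu|\neq\lambda_j-j$ for $j=1,\dots,r_1$, so $n\in\mathbb N_{\lambda,\mu}$ by \eqref{eq:Nlamu}.

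Parts (b)--(d) are variants. In (b) the seed is of type \eqref{app:fj2}, so one enlarges the filled part of $M_2$; this gives the generalised polynomial with \emph{second} partition enlarged, and the duality Lemma~\ref{lem:lemma4}, used to move the enlarged partition into first position, produces $P^{(\alpha-t_1-t_2,\,-\beta+t_1-t_2)}_{\mu,\lambda,n}$ with $\beta$ and the roles of $\lambda,\mu$ flipped and with $n=s+t_2+|\lambda|+|\mu|-r(\mu)$. In (c) and (d) the seed carries the factor $(1-x)^{-\alpha}$ (and in (d) also $(1+x)^{-\beta}$): it is of type \eqref{app:fj3} or \eqref{app:fj4}, and adjoining it enlarges the \emph{empty} part of $M_2$, respectively of $M_1$. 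Here it is cleanest to run the reduction through the second canonical form \eqref{eq:MayaCanonicalFormB} rather than \eqref{eq:MayaCanonicalFormA}, exactly as in the proof of Lemma~\ref{lem:ConPar}: this turns $\lambda,\mu$ into their conjugates $\lambda',\mu'$, and tracking how the two shifts get replaced by $\lambda_1+r_1$ and $\mu_1+r_2$ yields the parameters $\alpha'=\alpha+(\lambda_1-r_1)+(\mu_1-r_2)$ and $\beta'=\beta+(\lambda_1-r_1)-(\mu_1-r_2)$, together with the sign change on $\alpha$ (and on $\beta$, in (d)) coming from the $(1-x)^{-\alpha}$ (resp.\ $(1+x)^{-\beta}$) prefactor; the index $n$ is read off as in (a), giving $n=s+t_2+|\lambda|+|\mu|-r(\mu')$ in (c) and $n=s+t_1+|\lambda|+|\mu|-r(\lambda')$ in (d), with membership in $\mathbb N_{\mu',\lambda'}$, resp.\ $\mathbb N_{\lambda',\mu'}$, checked as before.

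It remains to pin down $C_i$ and to name the obstacle. Theorem~\ref{thm:ShiftMaya} is an identity of monic polynomials, so once we know the left-hand side is not identically zero it equals the corresponding exceptional Jacobi polynomial times the ratio of the two leading coefficients, whence $C_i\neq 0$; and the left-hand side is nonzero precisely when the $r+1$ entries of its Wronskian have pairwise distinct broad-sense degrees, by the same reasoning as for \eqref{MayaCondition1GJP}, which is exactly the extra set of conditions on $s$ mentioned at the end of the statement, while if two broad-sense degrees coincide the Wronskian vanishes identically and $C_i=0$. The main obstacle is the Maya-diagram bookkeeping: proving that $t(M_i^{+})$, $\lambda(M_i^{+})$ and the index $n$ line up as claimed, and in particular handling the degenerate cases in which the adjoined box prolongs the bottom staircase of a Maya diagram, so that $t(M_i^{+})\neq t_i$. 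In those cases one first applies a matching one-step reduction from Lemma~\ref{lem:MayaShiftOneStep} to both sides --- the exceptional polynomial reduces as well, since a seed of degree $0$ can be pulled out of the Wronskian as in the proof of Lemma~\ref{lem:MayaShiftOneStep}(a) --- after which the generic argument applies verbatim.
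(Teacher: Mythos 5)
Your overall strategy is the intended one (the paper itself gives no argument, deferring to Proposition 3 of \cite{Bonneux_Kuijlaars}): view each left-hand side as $\Omega^{(\alpha,\beta)}_{M_1',M_2'}$ for a pair of enlarged Maya diagrams, move to a canonical frame by the shift machinery of Section \ref{sec:ConstructionGJP}, and read off an exceptional Jacobi polynomial, with $C_i$ coming from leading coefficients and $C_i=0$ exactly when two entries share the same broad-sense degree. For parts (a) and (b) your sketch is essentially complete: the prefactor matching, the use of Theorem \ref{thm:ShiftMaya} (or, more directly, the one-step identities of Lemma \ref{lem:MayaShiftOneStep} applied $|t_1|$ and $|t_2|$ times, which makes the worry about $t(M_1^+)\neq t_1$ unnecessary), the duality Lemma \ref{lem:lemma4} in (b), and the membership check are all as they should be (only note that $n\ge|\lambda|+|\mu|-r(\lambda)$ needs $s+t_1\ge0$, which follows from $s\neq n_i$ when $t_1<0$, not from $s\ge0$ alone).

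The genuine gap is in (c) and (d), which is precisely where the work of this proposition lies. There you only assert that ``tracking how the two shifts get replaced by $\lambda_1+r_1$ and $\mu_1+r_2$ yields'' the stated parameters $\alpha'=\alpha+(\lambda_1-r_1)+(\mu_1-r_2)$, $\beta'=\beta+(\lambda_1-r_1)-(\mu_1-r_2)$; no tracking is actually performed, and the sentence is already internally inconsistent (replacements by $\lambda_1+r_1$, $\mu_1+r_2$ cannot produce corrections $\lambda_1-r_1$, $\mu_1-r_2$). If you carry the bookkeeping out, the passage of $M_1$ from its given position to canonical form \eqref{eq:MayaCanonicalFormB} shifts $(\alpha,\beta)$ by $-t_1+\lambda_1+r(\lambda)$ in both entries, and likewise $M_2^-$ contributes $-t_2+\mu_1+r(\mu)$ with opposite signs, so the parameters that come out involve $t_1,t_2$ and $\lambda_1+r(\lambda)$, $\mu_1+r(\mu)$, not the quantities you claim. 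A two-column test makes the discrepancy concrete: take $M_1=(\emptyset\mid 1)$, $M_2$ trivial, $s=0$ in (c); the left-hand side is $(1-x)^{\alpha+1}\Wr[P^{(\alpha,\beta)}_1,(1-x)^{-\alpha}]=(\alpha+1)P^{(\alpha-1,\beta+1)}_1$, whereas the parameters you say your method yields give $P^{(-\alpha,\beta)}_{\emptyset,(1),1}=-(1-\beta)P^{(1-\alpha,-\beta-1)}_1$, which is not proportional to $P^{(\alpha-1,\beta+1)}_1$ for generic $(\alpha,\beta)$; by contrast $P^{(-\alpha-2,\beta+2)}_{\emptyset,(1),1}$, whose parameters carry the shift corrections just described, does reproduce it. So as written your argument for (c)--(d) does not establish the identities (and it cannot simultaneously be a correct derivation and land on the formulas you quote); this step has to be redone carefully, conjugation bookkeeping included, before the proposition can be considered proved.
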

The proof is left out as one simply adapts the proof of Proposition 3 in \cite{Bonneux_Kuijlaars} to the Jacobi setting.

\subsection{$X_m$-Jacobi polynomials}
The first known examples of exceptional Jacobi polynomials were named $X_m$-Jacobi polynomials \cite{GomezUllate_Kamran_Milson-09,GomezUllate_Kamran_Milson-XJacobi,GomezUllate_Marcellan_Milson,Liaw_Littlejohn_Stewart}. Here $m$ is the number of exceptional (or missing) degrees and moreover, these not attained degrees are exactly the first $m$ non-negative integers. As described in \cite{GarciaFerrero_GomezUllate_Milson}, exceptional polynomials are obtained via a series of Darboux transformations on the differential operators for classical orthogonal polynomials. In this sense, $X_m$-Jacobi polynomials are obtained after a specific 1-step Darboux transformation. As this is just a special situation in our setting of two partitions, we identify $X_m$-Jacobi polynomials with our notation, in particular see \eqref{eq:XmId} below.

Let $m$ be a fixed non-negative integer, then the $X_m$-Jacobi polynomials \cite[Equation (64)]{GomezUllate_Marcellan_Milson} are defined as
\begin{multline}\label{eq:XmJac}
	P_{m,n}^{(\alpha,\beta)}(x)
		:= \frac{(-1)^m}{\alpha+1+n-m} \bigg(\frac{1}{2}(1+\alpha+\beta+n-m)(x-1) P_m^{(-\alpha-1,\beta-1)}(x) P_{n-m-1}^{(\alpha+2,\beta)}(x)
		\\
		+ (\alpha+1-m) P_m^{(-\alpha-2,\beta)}(x)P_{n-m}^{(\alpha+1,\beta-1)}(x)\bigg),
		\qquad n\geq m.
\end{multline}
We assume $\alpha>-1$ such that the denominator in \eqref{eq:XmJac} is non-zero, further conditions for the parameters can be found in \cite{GomezUllate_Marcellan_Milson}. A straightforward calculation using \eqref{eq:diff1} and \eqref{eq:diff3} shows that this polynomial can be written as a Wronskian with an appropriate prefactor,
\begin{equation*}
	P_{m,n}^{(\alpha,\beta)}
		= c_1 (1-x)^{\alpha+2} \Wr \left[(1-x)^{-\alpha-1}P_m^{(-\alpha-1,\beta-1)}, P^{(\alpha+1,\beta-1)}_{n-m}\right]
\end{equation*}
for some constant $c_1$. The Wronskian consists of 1 function as in \eqref{app:fj3} and 1 function as in \eqref{app:fj1}. In terms of definition \eqref{eq:OmegaM1M2}, we obtain
\begin{equation}\label{eq:XmJac2}
	P_{m,n}^{(\alpha,\beta)}
		= -c_1 \Omega^{(\alpha,\beta)}_{M_1,M_2}
\end{equation}
where the minus sign is obtained as we have to interchange both functions within the Wronskian, and
\begin{equation*} 
	\begin{aligned}
		M_1: & \quad (\emptyset \mid n-m), \\
		M_2: & \quad (m|\emptyset).
	\end{aligned}
\end{equation*}
We apply Theorem \ref{thm:ShiftMaya} to the right hand side of \eqref{eq:XmJac2}. This implies that the second Maya diagram $M_2$ shifts to $(\emptyset|m,m-1,m-2,\dots,1)$ with corresponding partition $\mu:=\lambda(M_2)=(1,1,\dots,1)$ with $|\mu|=m$, whereas the first Maya diagram $M_1$ is already in its canonical form. Now, as the exceptional Jacobi polynomial \eqref{def:XJP1}  is defined as a Wronskian of a fixed set function and 1 Jacobi polynomial of degree $s$, we can take $\lambda=\emptyset$ and $s=n-m$. We obtain
\begin{equation}\label{eq:XmId}
	P_{m,n}^{(\alpha,\beta)}
		= c_2 P^{(\alpha-m,\beta+m)}_{\emptyset,\mu,n}, \qquad n\in\mathbb{N}_{\emptyset,\mu}=\{m,m+1,m+2,\dots\}
\end{equation}
for some constant $c_2$ and where the degree equals $n$ as $s+|\lambda|+|\mu|-r_1=n$. The shift of the parameters is due to Theorem \ref{thm:ShiftMaya} and the constant changes as Theorem \ref{thm:ShiftMaya} is stated in terms of monic polynomials. We conclude that $X_m$-Jacobi polynomials are exceptional Jacobi polynomials in terms of \eqref{def:XJP1} associated with the empty partition and the partition $(1,1,\dots,1)$ of length $m$ and parameters $\alpha-m$ and $\beta+m$. 

More results for these specific polynomials can be found in \cite{GomezUllate_Marcellan_Milson}. Most of these results concerning the zeros of $X_m$-Jacobi polynomials are generalized in the following section.

\begin{remark}
If we compare the leading coefficients in \eqref{eq:XmId}, we obtain an expression for $c_2$. If $\beta>-1$ and $\beta \neq 0$, we obtain the equality
\begin{equation*}
	(n-m+\alpha+1)(1-n-\beta)_m \prod\limits_{j=1}^{m} (j-2m+\alpha-\beta+1)_j
		= c_2 (-2)^{\frac{m(m-1)}{2}} (m-\alpha+\beta-1)_m (n-2m+\alpha+1).
\end{equation*}
In case where $m=0$, we obtain $c_2=1$ such that \eqref{eq:XmId} reduces to
\begin{equation*}
	P_{0,n}^{(\alpha,\beta)}
		= P^{(\alpha,\beta)}_{\emptyset,\emptyset,n}, \qquad n\geq 0,
\end{equation*}
which are the (classical) Jacobi polynomials. 
\end{remark}

\subsection{Another identity in terms of partitions}\label{sec:anotheridentity}
As mentioned in Remark \ref{rem:type2and3}, the choice to work with the eigenfunctions $(1+x)^{-\beta} P_{m}^{(\alpha,-\beta)}(x)$ is a bit arbitrary. We could also use the eigenfunctions $(1-x)^{-\alpha} P_{m}^{(-\alpha,\beta)}(x)$. In this section, we show the relation between these 2 possible choices. The key idea is that we can shift the second Maya diagram from its canonical choice \eqref{eq:MayaCanonicalFormA} to the form \eqref{eq:MayaCanonicalFormB} while letting the first Maya diagram untouched.

We defined the exceptional Jacobi polynomial as
\begin{equation}\label{eq:P}
	P^{(\alpha,\beta)}_{\lambda,\mu,n}(x) 
		:=  (1+x)^{(\beta+r_1+1)r_2} \cdot \Wr \left[ f_1, \ldots, f_r, P^{(\alpha,\beta)}_s \right]
\end{equation}
where $s = n - |\lambda|- |\mu| + r_1$ with $n\in\mathbb{N}_{\lambda,\mu}$ and
\begin{align*} 
	f_j(x) & = P^{(\alpha,\beta)}_{n_j}(x),  
		&& j=1, \ldots, r_1,  \\
	f_{r_1+j}(x) & = (1+x)^{-\beta}P^{(\alpha,-\beta)}_{m_j}(x), 
		&& j = 1, \ldots, r_2.
\end{align*}
We assumed conditions \eqref{Condition2XJP} and \eqref{Condition1XJP} to be satisfied so that the polynomial has degree $n$.

Similarly, define the polynomials
\begin{align}
	\bar{\Omega}^{(\alpha,\beta)}_{\lambda,\mu,n}(x) 
	&:=  (1-x)^{(\alpha+r_1)r_2} \cdot \Wr \left[ \bar{f}_1, \ldots, \bar{f}_r, \right]
	\label{eq:Omegabar} \\
	\bar{P}^{(\alpha,\beta)}_{\lambda,\mu,n}(x) 
		&:=  (1-x)^{(\alpha+r_1+1)r_2} \cdot \Wr \left[ \bar{f}_1, \ldots, \bar{f}_r, P^{(\alpha,\beta)}_s \right]
	\label{eq:Pbar}	
\end{align}
where $s = n - |\lambda|- |\mu| + r_1$ with $n\in\mathbb{N}_{\lambda,\mu}$ and
\begin{align*} 
	\bar{f}_j(x) & = P^{(\alpha,\beta)}_{n_j}(x),  
		&& j=1, \ldots, r_1,  \\
	\bar{f}_{r_1+j}(x) & = (1-x)^{-\alpha}P^{(-\alpha,\beta)}_{m_j}(x), 
		&& j = 1, \ldots, r_2.
\end{align*}
To be sure that \eqref{eq:Pbar} has degree $n$, we assume the following conditions. The Jacobi polynomials must have full degree,
\begin{equation}\label{Condition2XJPother} 
\begin{aligned}
	&\alpha+\beta+n_i \notin \{-1,-2,\dots,-n_i\},	&&\qquad i=1,\dots,r_1,
	\\
	&\alpha+\beta+s   \notin \{-1,-2,\dots, -s \}, 
	\\
	&-\alpha+\beta+m_i \notin \{-1,-2,\dots,-m_i\},	&&\qquad i=1,\dots,r_2,
\end{aligned}
\end{equation}
and the entries of the Wronskian should be independent,
\begin{equation}\label{Condition1XJPother}
\begin{aligned} 
	&\alpha \neq m_j-n_i, \qquad &&i=1,\dots,r_1 \text{ and } j=1,\dots,r_2, \\
	&\alpha \neq m_j-s,   \qquad &&j=1,\dots,r_2.
\end{aligned} 
\end{equation}

Recall the usual convention $\mu_1=0$ if $r_2=0$ and the notation $\mu'$ for the conjugated partition of $\mu$. We have the following result which gives an identity between \eqref{eq:P} and \eqref{eq:Pbar}.

\begin{lemma}\label{lem:type2and3}
For any partition $\lambda$ and $\mu$, take $\alpha,\beta\in\mathbb{R}$ such that the conditions \eqref{Condition2XJP} and \eqref{Condition1XJP} are satisfied. Moreover, assume that conditions \eqref{Condition2XJPother} and \eqref{Condition1XJPother} are satisfied for the parameters $\alpha'=\alpha+\mu_1+r_2$ and $\beta'=\beta-\mu_1-r_2$ and partitions $\lambda$ and $\mu'$. Then
\begin{equation}\label{eq:type2and3}
	P^{(\alpha,\beta)}_{\lambda,\mu,n}(x) 
		= c \cdot \bar{P}^{(\alpha',\beta')}_{\lambda,\mu',n}(x) 
\end{equation}
for some non-zero constant $c$ as defined in \eqref{eq:constantc}. Moreover,
\begin{equation}\label{eq:type2and3-x}
	P^{(\alpha,\beta)}_{\lambda,\mu,n}(-x) 
	= (-1)^{n+r_1 r_2 + r_1 + r_2}  \bar{P}^{(\beta,\alpha)}_{\lambda,\mu,n}(x).
\end{equation}
\end{lemma}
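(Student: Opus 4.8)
The plan is to prove the two identities separately, using the two distinct Wronskian symmetries available: identity \eqref{eq:type2and3-x} follows directly from the reflection $x \mapsto -x$, while the deeper identity \eqref{eq:type2and3} is obtained by shifting the second Maya diagram from canonical form A (our working form \eqref{eq:MayaCanonicalFormA}) to canonical form B (the conjugate form \eqref{eq:MayaCanonicalFormB}), which exchanges the role of the eigenfunction types \eqref{eq:fj2} and \eqref{app:fj3}.

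\medskip

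First I would establish \eqref{eq:type2and3-x}. Write $P^{(\alpha,\beta)}_{\lambda,\mu,n}(x)$ as in \eqref{eq:P}, apply the substitution $x\mapsto -x$, and use the composition rule \eqref{eq:Wr2} with $h(x)=-x$ (picking up a factor $(-1)^{\binom{r+1}{2}}$ since there are $r+1$ functions in the Wronskian) together with $P^{(\alpha,\beta)}_n(-x)=(-1)^n P^{(\beta,\alpha)}_n(x)$ from \eqref{eq:Jacobi-x}. Each $f_j(-x)$ becomes $P^{(\beta,\alpha)}_{n_j}(x)$ up to a sign for $j\le r_1$, and $f_{r_1+j}(-x)=(1-x)^{-\beta}P^{(\beta,-\beta)}_{m_j}(-x)$ — wait, more carefully: $(1+(-x))^{-\beta}P^{(\alpha,-\beta)}_{m_j}(-x)=(1-x)^{-\beta}(-1)^{m_j}P^{(-\beta,\alpha)}_{m_j}(x)$, which is precisely (a sign times) $\bar f_{r_1+j}$ with the roles of $\alpha$ and $\beta$ swapped. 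Collecting all the sign contributions — the reflection factor, the $\sum n_i + \sum m_i + s$ from the Jacobi reflection, and the prefactor $(1+x)^{(\beta+r_1+1)r_2}\mapsto(1-x)^{(\beta+r_1+1)r_2}$ which matches the $(1-x)$-prefactor of $\bar P^{(\beta,\alpha)}_{\lambda,\mu,n}$ — and simplifying modulo $2$ using $s=n-|\lambda|-|\mu|+r_1$ and $|\lambda|=\sum n_i - \binom{r_1}{2}$, $|\mu|=\sum m_i-\binom{r_2}{2}$, should collapse to the exponent $n+r_1r_2+r_1+r_2$ claimed. This is a bookkeeping computation with no conceptual obstacle.

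\medskip

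For \eqref{eq:type2and3}, I would recognize $P^{(\alpha,\beta)}_{\lambda,\mu,n}$ as the generalized Jacobi polynomial $\Omega^{(\alpha,\beta)}_{\widetilde\lambda,\mu}$ (up to sign) for the enlarged partition $\widetilde\lambda$ whose sequence is $n_1,\dots,n_{r_1},s$, as noted right after \eqref{eq:XJPexample}; equivalently it is $\Omega^{(\alpha,\beta)}_{M_1,M_2}$ for Maya diagrams $M_1$ in form A with degrees $n_1,\dots,n_{r_1},s$ and $M_2$ in form A with degrees $m_1,\dots,m_{r_2}$ (so $r_3=r_4=0$). The key move, following the Remark after the proof of Theorem \ref{thm:ShiftMaya} and Section \ref{sec:anotheridentity}, is to re-encode $M_2$ in canonical form B \eqref{eq:MayaCanonicalFormB}: this replaces the $r_2$ filled boxes right of the origin (labeled $m_1>\dots>m_{r_2}$) by $\mu_1 = m_1 - r_2 + 1$ filled boxes left of the origin, labeled by the conjugate partition $\mu'$, while the empty boxes right of the origin become the $f$-functions of type \eqref{app:fj3}. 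Tracking the shift $t = t(M_2)$ needed to reach form B — which is $-(m_1+1)=-(\mu_1+r_2)$ relative to form A — through the one-step reductions \eqref{eq:reduction2}–\eqref{eq:reduction3} of Lemma \ref{lem:MayaShiftOneStep} applied $\mu_1+r_2$ times, each step shifting $(\alpha,\beta)\mapsto(\alpha-1,\beta+1)$ or its inverse as appropriate, yields the parameter shift $(\alpha,\beta)\mapsto(\alpha+\mu_1+r_2,\beta-\mu_1-r_2)=(\alpha',\beta')$. After this shift the Wronskian has entries of type \eqref{app:fj1} (the $n_j$ and $s$) and type \eqref{app:fj3} (the conjugate degrees), with the common factor $(1-x)^{-\alpha'}$ extractable; matching prefactors exactly as in the proof of Lemma \ref{lem:ConPar} (equation \eqref{eq:conj0}) identifies the result with $\bar P^{(\alpha',\beta')}_{\lambda,\mu',n}$ up to a constant $c$, which one reads off from comparing leading coefficients via Lemma \ref{lem:deg}; I would record that value as \eqref{eq:constantc}.

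\medskip

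The main obstacle I anticipate is \emph{not} the Maya-diagram shift itself — that is a direct application of Lemma \ref{lem:MayaShiftOneStep} and mirrors Section \ref{sec:SpecialCase} — but rather verifying that the hypotheses transfer correctly: one must check that conditions \eqref{Condition2XJP}–\eqref{Condition1XJP} for $(\alpha,\beta),\lambda,\mu$ together with the assumed \eqref{Condition2XJPother}–\eqref{Condition1XJPother} for $(\alpha',\beta'),\lambda,\mu'$ are exactly what is needed for every intermediate $\widehat\Omega$ in the chain \eqref{eq:shifting2} to be nonvanishing and of full degree, so that Lemma \ref{lem:MayaShiftOneStep} is legitimately applicable at each of the $\mu_1+r_2$ steps (the intermediate Maya diagrams are partially shifted, so their non-degeneracy conditions interpolate between the two endpoint condition sets). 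A careful but routine case analysis on whether the box left of the origin is filled or empty at each step, exactly as in the proof of Theorem \ref{thm:ShiftMaya}, handles this; the degree count $s=n-|\lambda|-|\mu|+r_1$ is preserved because the conjugation $\mu\leftrightarrow\mu'$ leaves $|\mu|$ unchanged and the added Jacobi polynomial of degree $s$ is untouched throughout.
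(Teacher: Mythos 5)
Your proposal matches the paper's own proof in essence: the paper also derives \eqref{eq:type2and3} by shifting the second Maya diagram $(\emptyset \mid m_1,\dots,m_{r_2})$ by $m_1+1=\mu_1+r_2$ steps to its canonical form \eqref{eq:MayaCanonicalFormB} via the one-step reductions \eqref{eq:reduction2}--\eqref{eq:reduction3}, obtaining the parameter shift to $(\alpha+\mu_1+r_2,\beta-\mu_1-r_2)$ and the conjugate partition $\mu'$, and then fixes $c$ by comparing leading coefficients, while \eqref{eq:type2and3-x} is likewise obtained from \eqref{eq:Wr2} and \eqref{eq:Jacobi-x}. One small inaccuracy in your sketch: after the shift you cannot extract a common factor $(1-x)^{-\alpha'}$ as in the proof of Lemma \ref{lem:ConPar} (only the type-\eqref{app:fj3} entries carry it, since here $r_1>0$ and the degree-$s$ Jacobi polynomial is present), but this step is unnecessary because the prefactor in \eqref{eq:OmegaM1M2} for the shifted diagrams already coincides with the prefactor $(1-x)^{(\alpha'+r_1+1)r(\mu')}$ in the definition \eqref{eq:Pbar}.
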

\begin{proof}
By the assumptions on the parameters and partitions, we know that both polynomials in \eqref{eq:type2and3} have degree $n$. Moreover, note that the last Jacobi polynomial in both Wronskians has the same degree $s$. We now argue that the polynomials are a multiple of each other. 

Consider the second Maya diagram corresponding to the polynomial $P^{(\alpha,\beta)}_{\lambda,\mu,n}$, it is given by	
\begin{equation*}
	\widetilde{M_2} : \quad \left(\emptyset \mid m_1, m_2, \ldots, m_{r_2} \right).
\end{equation*}
Now, we shift this Maya diagram to its other canonical form defined in \eqref{eq:MayaCanonicalFormB}. To do this, we have to shift the origin $m_1+1$ steps to the right. While doing this, the parameters shift in each step as stated in \eqref{eq:reduction2} and \eqref{eq:reduction3}. Hence in total, the parameters $(\alpha,\beta)$ transpose to $(\alpha+\mu_1+r_2,\beta-\mu_1-r_2)$, where we use that $m_1+1=\mu_1+r_2$. The shift to the canonical form \eqref{eq:MayaCanonicalFormB} is such that we end up with the conjugated partition $\mu'$ and the eigenfunctions are now of type \eqref{app:fj3}. Hence we end up with the fact that the monic variant of $P^{(\alpha,\beta)}_{\lambda,\mu,n}$ equals the monic variant of $\bar{P}^{(\alpha+\mu_1+r_2,\beta-\mu_1-r_2)}_{\lambda,\mu',n}$. A constant $c$ is needed to obtain equality in the non-monic case. Comparing leading coefficients of both polynomials gives that
\begin{equation}\label{eq:constantc}
	c
		= \frac
		{
		\frac{\prod\limits_{j=1}^{r_2}(m_j+\alpha-\beta+1)_{m_j}}{2^{ \sum\limits_{j=1}^{r_2}m_j}\prod\limits_{j=1}^{r_2}m_j!}
		\Delta(n_{\mu})\prod\limits_{i=1}^{r_1}\prod\limits_{j=1}^{r_2}(m_j-n_i-\beta) 
		}
		{
		\frac{\prod\limits_{j=1}^{r(\mu')}(m'_j-\alpha'+\beta'+1)_{m'_j}}{2^{ \sum\limits_{j=1}^{r(\mu')}m'_j}\prod\limits_{j=1}^{r(\mu')}m'_j!}
		\Delta(n_{\mu'})\prod\limits_{i=1}^{r_1}\prod\limits_{j=1}^{r(\mu')}(m'_j-n_i-\alpha') 
		}
		\frac{\prod\limits_{j=1}^{r_2}(s+\beta-m_j)}{\prod\limits_{j=1}^{r(\mu')}(s+\alpha'-m_j')}
\end{equation}
where $\alpha'=\alpha+\mu_1+r_2$, $\Delta$ is the Vandermonde determinant, $r(\mu')$ is the length of the partition $\mu'$ and the elements $m'_i$ form the corresponding sequence of the partition $\mu'$. This constant is real and non-zero by the assumptions on the parameters and partitions.

Identity \eqref{eq:type2and3-x} follows from properties \eqref{eq:Wr2} and \eqref{eq:Jacobi-x}. The details are left out.
\end{proof}

%%%%%%%%%%%%%%%%%%%%%%%%%%%%%%%%%%%%%%%%%%%%%%%%%
%%%%%%%%%%%%%%%%% New Results %%%%%%%%%%%%%%%%%%%
%%%%%%%%%%%%%%%%%%%%%%%%%%%%%%%%%%%%%%%%%%%%%%%%%

\section{Zeros of exceptional Jacobi polynomials: new results}\label{sec:ZerosResults}
In this section we present new results about the zeros of exceptional Jacobi polynomials, the proofs are given in Section \ref{sec:ZerosProofs}. 

As before, let $\lambda$ and $\mu$ be fixed partitions with corresponding sequences $n_{\lambda}=(n_1,\dots,n_{r_1})$ and $n_{\mu}=(m_1,\dots,m_{r_2})$.  Set $r=r_1+r_2$. Assume that the parameters $\alpha,\beta\in\mathbb{R}$ satisfy the conditions \eqref{Condition2XJP} and \eqref{Condition1XJP} such that $\deg P_{\lambda,\mu,n}^{(\alpha,\beta)}=n$; hence the polynomial has $n$ zeros in the complex plane.

\subsection{Number of regular zeros}
The zeros of exceptional polynomials behave different than their classical counterparts: a zero can lie anywhere in the complex plane and the multiplicity of the zero is not necessarily 1.

\begin{definition}
For any partitions $\lambda,\mu$ and $n\in\mathbb{N_{\lambda,\mu}}$, we define $N(n)$ as the number of zeros of the exceptional Jacobi polynomial $P^{(\alpha,\beta)}_{\lambda, \mu,n}$ which lie in the interval $(-1,1)$, where we include multiplicity. We call these zeros the regular zeros of $P^{(\alpha,\beta)}_{\lambda, \mu,n}$. The remaining zeros are called the exceptional zeros.
\end{definition}

We omitted the partitions $\lambda,\mu$ and the parameters $\alpha,\beta$ in our notation of $N(n)$ because it should be clear what they are. The following two results are trivial.
\begin{itemize}
	\item[(a)] 
	Under the conditions of Lemma \ref{lem:XJPdeg}, we have $\deg P_{\lambda,\mu,n}^{(\alpha,\beta)}=n$ such that $0\leq N(n) \leq n$ and the number of exceptional zeros is given by $n-N(n)$.
	
	\item[(b)] 
	Under the conditions of Lemma \ref{lem:XJPorth}, the number $N(n)$ can be computed explicitly by standard Sturm-Liouville theory:
	\begin{equation}\label{eq:NumberRegularZeros}
		N(n)
			= |\{m \in \mathbb{N_{\lambda,\mu}} : m<n\}|, \qquad n\in\mathbb{N}_{\lambda,\mu}.
	\end{equation}
	Moreover, all these regular zeros are simple. Hence, for $n$ large enough, we have $n-|\lambda|-|\mu|$ simple regular zeros and $|\lambda|+|\mu|$ exceptional zeros. 
\end{itemize}

Inspecting \eqref{eq:NumberRegularZeros} gives that the number of exceptional zeros is bounded when the degree $n$ tends to infinity. This result holds true in a more general setting as presented in the following theorem. The proof is given in Section \ref{sec:ProofN(n)}. 

\begin{theorem} \label{thm:XJPN(n)}
For any partitions $\lambda$ and $\mu$, take $\alpha,\beta\in\mathbb{R}$ and $n\in\mathbb{N}_{\lambda,\mu}$ such that the conditions \eqref{Condition2XJPbis} and \eqref{Condition1XJP} are satisfied. If $\alpha+r>-1$ and $\beta+r>-1$, then
\begin{equation*}
	n-2(|\lambda|+|\mu|+r_2) \leq N(n).
\end{equation*}
Moreover, the number of simple regular zeros increases to infinity as $n$ tends to infinity.
\end{theorem}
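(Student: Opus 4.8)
The plan is to prove the inequality by bounding from below the number of sign changes of the real polynomial $P^{(\alpha,\beta)}_{\lambda,\mu,n}$ on the open interval $(-1,1)$; since at a sign change $P^{(\alpha,\beta)}_{\lambda,\mu,n}$ must vanish to odd order, the number of such sign changes is at most $N(n)$, so any lower bound on it is a lower bound on $N(n)$. To count sign changes I would invoke the linear-combination formula, Lemma~\ref{lem:XJPlincomJac}, which -- up to a factor that is strictly positive on $(-1,1)$ -- rewrites $P^{(\alpha,\beta)}_{\lambda,\mu,n}$ as a combination of finitely many classical Jacobi polynomials $P^{(\alpha+k,\beta+k)}_{s-k}$, $0\le k\le r$, with coefficient polynomials whose degrees are bounded by a quantity depending only on $\lambda$ and $\mu$, and in which the top term is a non-zero constant times $(1+x)^{r_2}\,\Omega^{(\alpha,\beta)}_{\lambda,\mu}(x)\,P^{(\alpha+r,\beta+r)}_{s-r}(x)$, a polynomial of degree exactly $n$ by Lemma~\ref{lem:deg}. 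The extra no-degree-reduction hypothesis \eqref{Condition2XJPbis} (together with \eqref{Condition1XJP}) ensures all the classical polynomials occurring here and in the intermediate steps genuinely have full degree, while $\alpha+r>-1$ and $\beta+r>-1$ ensure, by the classical theory, that $P^{(\alpha+r,\beta+r)}_{s-r}$ -- indeed each $P^{(\alpha+k,\beta+k)}_{s-k}$ -- has simple zeros all lying in $(-1,1)$. Thus the leading term contributes $s-r$ sign changes in $(-1,1)$, and $s-r=n-(|\lambda|+|\mu|)-r_2$.

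The heart of the matter is then a Rolle/Sturm-type comparison showing that the remaining, lower-degree terms destroy only $O(|\lambda|+|\mu|+r_2)$ of these sign changes. I would carry this out gap by gap between consecutive zeros of the leading classical factor: in all but boundedly many of these gaps the leading term dominates the correction at the points where the correction could flip the sign, so a genuine sign change of $P^{(\alpha,\beta)}_{\lambda,\mu,n}$ survives; the gaps one must discard are governed by the zeros of $\Omega^{(\alpha,\beta)}_{\lambda,\mu}$ in $(-1,1)$ (of which there are at most $|\lambda|+|\mu|$) together with the bounded-degree coefficient polynomials, and the two endpoint intervals are handled separately. Tracking the bookkeeping and the initial passage from a zero count to a sign-change count produces the factor two and yields $N(n)\ge n-2(|\lambda|+|\mu|+r_2)$. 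I expect this comparison to be the main obstacle: one needs lower bounds on $|P^{(\alpha+r,\beta+r)}_{s-r}|$ at its local extrema that are uniform in the degree $s$, so as to dominate the correction terms in all but boundedly many gaps, and this is exactly where one passes to the trigonometric (Schr\"odinger) variable of Table~\ref{tab:2}, where the relevant eigenfunctions are slowly modulated oscillations, and uses the classical Darboux-P\"oschl-Teller asymptotics.

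For the final assertion, that the number of \emph{simple} regular zeros tends to infinity, I would argue by a multiplicity count. Write $K=|\lambda|+|\mu|+r_2$ and let $T$ be the number of sign changes of $P^{(\alpha,\beta)}_{\lambda,\mu,n}$ in $(-1,1)$, so $T\ge n-2K$ by the first part. These $T$ points are distinct; let $p$ of them be simple zeros and the other $T-p$ have (necessarily odd, hence $\ge 3$) multiplicity. Since the total multiplicity of all zeros of $P^{(\alpha,\beta)}_{\lambda,\mu,n}$ equals $\deg P^{(\alpha,\beta)}_{\lambda,\mu,n}=n$, we get $p+3(T-p)\le n$, i.e.\ $p\ge(3T-n)/2\ge(3(n-2K)-n)/2=n-3K$. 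As $K$ depends only on $\lambda$ and $\mu$, the number of simple regular zeros is at least $n-3K\to\infty$ as $n\to\infty$, which completes the plan.
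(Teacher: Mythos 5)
You correctly identify the paper's key lemma (Lemma \ref{lem:XJPlincomJac}) as the engine of the proof, but you then bypass its actual conclusion and replace the decisive step with an argument that is not carried out. The point of Lemma \ref{lem:XJPlincomJac} is not the intermediate expansion $\sum_k Q_k(x)\,P^{(\alpha+k,\beta+k)}_{s-k}(x)$ with polynomial coefficients that you quote, but the final statement that $P^{(\alpha,\beta)}_{\lambda,\mu,n}$ is a \emph{constant-coefficient} linear combination of the single orthogonal family $P^{(\alpha+r,\beta+r)}_{M}$ with $n-t\le M\le n$, $t=2(|\lambda|+|\mu|+r_2)$. Since $\alpha+r>-1$ and $\beta+r>-1$, these are genuine orthogonal polynomials, so $\int_{-1}^{1}H(x)\,P^{(\alpha,\beta)}_{\lambda,\mu,n}(x)\,(1-x)^{\alpha+r}(1+x)^{\beta+r}\,dx=0$ for every polynomial $H$ with $\deg H<n-t$, and the standard quadrature argument (take $H$ with simple zeros at the sign changes) immediately forces at least $n-t$ sign changes in $(-1,1)$. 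This is the paper's proof, it needs no asymptotics, and it produces the constant $2(|\lambda|+|\mu|+r_2)$ exactly. By contrast, the heart of your plan — that in all but boundedly many gaps between consecutive zeros of $P^{(\alpha+r,\beta+r)}_{s-r}$ the top term $(1+x)^{r_2}\Omega^{(\alpha,\beta)}_{\lambda,\mu}\,\frac{d^r}{dx^r}P^{(\alpha,\beta)}_{s}$ dominates the correction terms at the points where a sign flip could be destroyed — is precisely the step you flag as ``the main obstacle'' and never establish. It would require uniform-in-$n$ lower bounds at the local extrema, a separate treatment of the Mehler--Heine regions at $\pm1$ and of neighborhoods of the zeros of $\Omega^{(\alpha,\beta)}_{\lambda,\mu}$ in $(-1,1)$, and even granting all of that, your claim that ``the bookkeeping produces the factor two'' and yields the specific bound $n-2(|\lambda|+|\mu|+r_2)$ is asserted rather than derived. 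As written, the first (and main) inequality of the theorem is therefore not proved.

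Two smaller remarks. First, note that the theorem invokes \eqref{Condition2XJPbis} rather than \eqref{Condition2XJP} precisely so that Lemma \ref{lem:JacobiShiftParameter} can be applied inside Lemma \ref{lem:XJPlincomJac}; your sketch gestures at this but the role of the stronger condition only becomes visible once you actually pass to the constant-coefficient combination in the parameters $(\alpha+r,\beta+r)$. Second, your final paragraph — the multiplicity count $p+3(T-p)\le n$ giving at least $n-3(|\lambda|+|\mu|+r_2)$ simple regular zeros — is correct and in fact spells out the detail behind the paper's terse remark that the sign-change zeros have odd multiplicity, so the number of simple regular zeros tends to infinity; that part can be kept once the first part is repaired along the orthogonality route.
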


The theorem does not say anything about the exact number of regular zeros. In case where $r_1=0$ or $r_2=0$ and $\alpha\neq\beta$, this number can be determined explicitly for almost all $\alpha$ and $\beta$, see \cite{GarciaFerrero_GomezUllate}. It is given by the alternating sum of the elements in the partition. 

Via Theorem \ref{thm:XJPN(n)}, we describe the asymptotic behavior of the zeros for exceptional Jacobi polynomial. For the regular zeros we obtain that they have the same asymptotic behavior as the zeros for classical Jacobi polynomials, see Corollary \ref{cor:XJPRegularZeros} and Theorem \ref{thm:XJPArcsineLaw}. The exceptional zeros converge to the zeros of the generalized Jacobi polynomial, see Theorem \ref{thm:XJPExceptionalZeros}. These results justify the conjecture in \cite{Kuijlaars_Milson} for exceptional Jacobi polynomials.

\subsection{Mehler-Heine asymptotics}
Mehler-Heine asymptotics describe the asymptotic behavior near the endpoints of the orthogonality region for orthogonal polynomials. For Jacobi polynomials, the result is stated in Theorem \ref{thm:JacMehlerHeine} below. In the exceptional setting, a similar result holds true as described in the following theorem. The proof is given in Section \ref{sec:ProofMehlerHeine}. Let $J_{\nu}$ denote the Bessel function of the first kind with parameter $\nu\in\mathbb{R}$.

\begin{theorem}\label{thm:XJPMehlerHeine}
For any partitions $\lambda$ and $\mu$, take $\alpha,\beta\in\mathbb{R}$ such that the conditions \eqref{Condition2GJP} and \eqref{Condition1GJP} are satisfied. Let $r(\mu')$ the length of the conjugated partition $\mu'$.

\begin{itemize}
	\item[(a)]
	Set $r=r_1+r_2$, then
	\begin{equation}\label{eq:XJPMehlerHeineAlpha}
		\lim_{n \to \infty} \frac{1}{n^{\alpha+2r}} P^{(\alpha,\beta)}_{\lambda, \mu,n} \left( \cos\left(\frac{x}{n}\right) \right)
			= \Omega^{(\alpha,\beta)}_{\lambda,\mu}(1) \, 2^{\alpha+r_2} x^{-\alpha-r} J_{\alpha + r}(x),
	\end{equation}
	uniformly for $x$ in compact subsets of the complex plane.
	
	\item[(b)]
	If the conditions \eqref{Condition2XJPother} and \eqref{Condition1XJPother} are satisfied for the parameters $\alpha'=\alpha+\mu_1+r_2$ and $\beta'=\beta-\mu_1-r_2$ and partitions $\lambda$ and $\mu'$, then
	\begin{equation}\label{eq:XJPMehlerHeineBeta}
		\lim_{n \to \infty} \frac{(-1)^{n-|\lambda|-|\mu|+r(\mu')}}{n^{\beta+2r_1}} P^{(\alpha,\beta)}_{\lambda, \mu,n} \left( -\cos\left(\frac{x}{n}\right) \right)
		= d \cdot \bar{\Omega}^{(\alpha',\beta')}_{\lambda,\mu'}(-1) \, x^{-\beta-r_1+r_2}J_{\beta + r_1-r_2}(x),
	\end{equation}
	uniformly for $x$ in compact subsets of the complex plane and for some non-zero $d\in\mathbb{R}$, and where $\bar{\Omega}^{(\alpha',\beta')}_{\lambda,\mu'}$ is defined in \eqref{eq:Omegabar}.
\end{itemize}
\end{theorem}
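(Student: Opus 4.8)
The plan is to reduce both statements to the classical Mehler--Heine formula for Jacobi polynomials (Theorem~\ref{thm:JacMehlerHeine}) by expanding the defining Wronskian along the column that carries the running Jacobi polynomial $P^{(\alpha,\beta)}_s$. Throughout one uses that $\mathbb N_{\lambda,\mu}$ is cofinite and that for $n$ large the conditions \eqref{Condition2XJP}--\eqref{Condition1XJP} (and $\deg P^{(\alpha,\beta)}_s=s$) hold, so $P^{(\alpha,\beta)}_{\lambda,\mu,n}$ has degree $n$ and every object below is well defined; since we let $n\to\infty$ this is enough.

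\textbf{Part (a).} Starting from $P^{(\alpha,\beta)}_{\lambda,\mu,n}(x)=(1+x)^{(\beta+r_1+1)r_2}\Wr[f_1,\dots,f_r,P^{(\alpha,\beta)}_s](x)$, substitute $x=\cos(x/n)$ (so $1-\cos(x/n)\sim x^2/(2n^2)\to0$) and expand the $(r+1)\times(r+1)$ Wronskian determinant along its last column. The entries of that column are $\frac{d^{k}}{dx^{k}}P^{(\alpha,\beta)}_s$ for $k=0,\dots,r$, which by \eqref{eq:JacobiDerivative} equal $\frac{(s+\alpha+\beta+1)_k}{2^k}P^{(\alpha+k,\beta+k)}_{s-k}$. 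Since $s-k=n(1+o(1))$ and $\cos(x/n)=\cos(y_n/(s-k))$ with $y_n\to x$, Theorem~\ref{thm:JacMehlerHeine} gives, uniformly for $x$ in compact sets, $\frac{d^{k}}{dx^{k}}P^{(\alpha,\beta)}_s(\cos(x/n))=n^{\alpha+2k}\,2^{\alpha}\,x^{-(\alpha+k)}J_{\alpha+k}(x)\,(1+o(1))$; uniformity near $x=0$ is fine because $z\mapsto z^{-\nu}J_\nu(z)$ is entire. The minor multiplying the top entry ($k=r$) is exactly $\Wr[f_1,\dots,f_r](\cos(x/n))$, which converges to $\Wr[f_1,\dots,f_r](1)=2^{-(\beta+r_1)r_2}\Omega^{(\alpha,\beta)}_{\lambda,\mu}(1)$ by \eqref{eq:OmegaLaMu} (the $f_j$ are analytic near $x=1$), while the remaining minors stay bounded and are multiplied by $O(n^{\alpha+2(r-1)})$. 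Hence after dividing by $n^{\alpha+2r}$ only the top term survives; collecting the powers of $2$ and of $n$ (using $(s+\alpha+\beta+1)_r=n^r(1+o(1))$ and $(1+\cos(x/n))^{(\beta+r_1+1)r_2}\to2^{(\beta+r_1+1)r_2}$) yields exactly \eqref{eq:XJPMehlerHeineAlpha}.

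\textbf{Part (b).} The hypotheses are precisely those of Lemma~\ref{lem:type2and3}, so $P^{(\alpha,\beta)}_{\lambda,\mu,n}=c\,\bar P^{(\alpha',\beta')}_{\lambda,\mu',n}$ with $\alpha'=\alpha+\mu_1+r_2$, $\beta'=\beta-\mu_1-r_2$, and $c$ given by \eqref{eq:constantc}; the only $n$-dependent factor of $c$ is $\prod_j(s+\beta-m_j)/\prod_j(s+\alpha'-m'_j)$, which behaves like $n^{r_2-r(\mu')}$. For $\bar P^{(\alpha',\beta')}_{\lambda,\mu',n}$ near $-1$ I reduce to part (a): the reflection \eqref{eq:type2and3-x} gives $\bar P^{(\alpha',\beta')}_{\lambda,\mu',n}(-\cos(x/n))=(-1)^{n+r_1 r(\mu')+r_1+r(\mu')}P^{(\beta',\alpha')}_{\lambda,\mu',n}(\cos(x/n))$, and part (a) applied to the right-hand side --- with the role of $r$ now played by $\rho:=r_1+r(\mu')$ and $r(\mu')=\mu_1$ --- produces Bessel index $\beta'+\rho=\beta+r_1-r_2$ and scale $n^{\beta'+2\rho}=n^{\beta+2r_1+\mu_1-r_2}$. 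Multiplying by $c\sim n^{r_2-\mu_1}$ cancels the exponent $\mu_1-r_2$, leaving the scale $n^{\beta+2r_1}$; the $(-1)^n$ produced by the Mehler--Heine limit at $-1$ matches the factor $(-1)^{n-|\lambda|-|\mu|+r(\mu')}$ in the statement, so the limit is a genuine constant. Finally one rewrites $\Omega^{(\beta',\alpha')}_{\lambda,\mu'}(1)$ as $\pm\,\bar\Omega^{(\alpha',\beta')}_{\lambda,\mu'}(-1)$ using \eqref{eq:Omega-x} (recall $\widetilde\Omega=\bar\Omega$), and absorbing all remaining fixed signs and powers of $2$ into a nonzero real constant $d$ gives \eqref{eq:XJPMehlerHeineBeta}. (Equivalently one can bypass Lemma~\ref{lem:type2and3} and run the Wronskian expansion of part (a) directly at $x=-1$ for $\bar P^{(\alpha',\beta')}_{\lambda,\mu',n}$, whose seed functions and prefactor are all regular there.)

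\textbf{Where the work is.} The conceptual step is just the cofactor expansion; the effort lies in making the estimates uniform on compact $x$-sets --- controlling the subleading minors and the behaviour near $x=0$ --- and in checking that \eqref{Condition2XJP}--\eqref{Condition1XJP} hold for all large $n$. In part (b) the one genuine nuisance is tracking the signs and powers of $2$ through Lemma~\ref{lem:type2and3}, \eqref{eq:type2and3-x} and \eqref{eq:Omega-x}; this is mechanical but is likely the longest portion of the write-up.
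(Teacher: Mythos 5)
Your proposal is correct and follows essentially the same route as the paper: in (a) the cofactor expansion of the Wronskian along the column carrying $P^{(\alpha,\beta)}_s$ (whose leading minor is, up to the prefactor, $\Omega^{(\alpha,\beta)}_{\lambda,\mu}$) combined with the classical Mehler--Heine limit and the degree/boundedness control of the remaining minors, and in (b) the reduction to $\bar P^{(\alpha',\beta')}_{\lambda,\mu',n}$ through Lemma~\ref{lem:type2and3} with the $n$-dependence $c\sim n^{r_2-r(\mu')}$ tracked exactly as in the paper. The only deviation is cosmetic: instead of applying \eqref{eq:JacMehHeiBeta} directly at $-\cos(x/n)$ as the paper does, you reflect to the endpoint $+1$ via \eqref{eq:type2and3-x} and \eqref{eq:Omega-x} and reuse part (a), which is an equivalent bookkeeping choice (the paper itself obtains \eqref{eq:JacMehHeiBeta} from \eqref{eq:JacMehHeiAlpha} by the same reflection), so only the fixed signs absorbed into $d$ differ.
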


This asymptotic behavior allows us to describe a convergence property for the regular zeros near the endpoints $-1$ and $1$ if the value $\Omega^{(\alpha,\beta)}_{\lambda,\mu}(1)$ (respectively $\bar{\Omega}^{(\alpha',\beta')}_{\lambda,\mu'}(-1)$) is non-zero. We know that the function $x^{-\nu}J_{\nu}(x)$ is an entire function in the complex plane with an infinite number of zeros on the real line, which are all simple, except possibly at the origin. Moreover, the function has a zero at the origin if and only if $\nu\leq -1$. Therefore, if we apply Hurwitz theorem \cite[Theorem 1.91.3]{Szego} on \eqref{eq:XJPMehlerHeineAlpha} or \eqref{eq:XJPMehlerHeineBeta}, we get that the regular zeros near the edge converge to the zeros of the Bessel function. Let $j_{\nu,k}$ denote the $k^\text{th}$ positive zero of the Bessel function $J_{\nu}$.

\begin{corollary} \label{cor:XJPRegularZeros}
For any partitions $\lambda$ and $\mu$, take $\alpha,\beta\in\mathbb{R}$ such that the conditions \eqref{Condition2GJP} and \eqref{Condition1GJP} are satisfied. Assume that $\alpha+r>-1$ and $\beta+r>-1$. Then for every positive integer $k$, there is an integer $n_0$ such that for all $n\in\mathbb{N}_{\lambda,\mu}$ with $n\geq n_0$, the exceptional Jacobi polynomial $P^{(\alpha,\beta)}_{\lambda, \mu,n}$ has at least $k$ zeros in $(-1,1)$. Set $\alpha'=\alpha+\mu_1+r_2$ and $\beta'= \beta-\mu_1-r_2$.
\begin{enumerate}
	\item[\rm (a)] 
	Let $x_{k,n}^{(\alpha,\beta)}$ denote the $k^{\text{th}}$ regular zero which is the $k^{\text{th}}$ closest zero to 1, including multiplicity. Take the unique $0 \leq \theta^{(\alpha,\beta)}_{k,n} \leq \pi$ such that $\cos\left(\theta^{(\alpha,\beta)}_{k,n} \right)=x_{k,n}^{(\alpha,\beta)}$. 
	If $\Omega^{(\alpha,\beta)}_{\lambda,\mu}(1)\neq 0$, then we have
	\begin{equation*}
	\lim_{n \to \infty} n \theta^{(\alpha,\beta)}_{k,n}  = j_{\alpha+r_1+r_2,k}.
	\end{equation*}
	
	\item[\rm (b)] 
	Assume that the conditions \eqref{Condition2XJPother} and \eqref{Condition1XJPother} are satisfied for the parameters $\alpha'=\alpha+\mu_1+r_2$ and $\beta'=\beta-\mu_1-r_2$ and partitions $\lambda$ and $\mu'$. Let $y_{k,n}^{(\alpha,\beta)}$ denote the $k^{\text{th}}$ regular zero which is the $k^{\text{th}}$ closest zero to -1, including multiplicity. Take the unique $0 \leq \Theta^{(\alpha,\beta)}_{k,n} \leq \pi$ such that $\cos\left(\Theta^{(\alpha,\beta)}_{k,n} \right)=-x_{k,n}^{(\alpha,\beta)}$. 
	If $\bar{\Omega}^{(\alpha',\beta')}_{\lambda,\mu'}(-1)\neq 0$, then we have
	\begin{equation*}
	\lim_{n \to \infty} n \Theta^{(\alpha,\beta)}_{k,n}  = j_{\beta+r_1-r_2,k}.
	\end{equation*}
\end{enumerate}
\end{corollary}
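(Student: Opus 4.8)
The plan is to deduce the corollary from the Mehler--Heine asymptotics in Theorem \ref{thm:XJPMehlerHeine} together with Theorem \ref{thm:XJPN(n)} and Hurwitz's theorem, imitating the classical argument for Jacobi polynomials. First I would treat part (a). Introduce the rescaled polynomials $g_n(x) := n^{-\alpha-2r} P^{(\alpha,\beta)}_{\lambda,\mu,n}\!\left(\cos(x/n)\right)$, which by \eqref{eq:XJPMehlerHeineAlpha} converge, uniformly on compact subsets of $\mathbb{C}$, to the entire function $g(x) := \Omega^{(\alpha,\beta)}_{\lambda,\mu}(1)\, 2^{\alpha+r_2} x^{-\alpha-r} J_{\alpha+r}(x)$. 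Since $\Omega^{(\alpha,\beta)}_{\lambda,\mu}(1)\neq 0$ by hypothesis, $g$ is a nonzero entire function whose zeros are exactly the points $\pm j_{\alpha+r,k}$ (and, if $\alpha+r\le -1$, possibly the origin --- but the hypothesis $\alpha+r>-1$ rules that out, so $g(0)\neq 0$); in particular each positive zero $j_{\alpha+r,k}$ is simple. Fix $k$ and choose a small closed disc $D_k$ around $j_{\alpha+r,k}$ containing no other zero of $g$. By Hurwitz's theorem \cite[Theorem 1.91.3]{Szego}, for $n$ large $g_n$ has exactly one zero in $D_k$; applying this for $j_{\alpha+r,1},\dots,j_{\alpha+r,k}$ simultaneously gives at least $k$ zeros of $g_n$ near the positive real axis, hence at least $k$ zeros $x$ of $P^{(\alpha,\beta)}_{\lambda,\mu,n}$ with $\cos(x/n)$ close to $1$, i.e. at least $k$ regular zeros clustering near $1$.

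The remaining point for (a) is bookkeeping: one must check that the $k$ zeros produced by Hurwitz are genuinely the $k$ zeros \emph{closest} to $1$, so that $n\theta^{(\alpha,\beta)}_{k,n}$ is the quantity converging to $j_{\alpha+r,k}$. This is where Theorem \ref{thm:XJPN(n)} enters: it guarantees that $N(n)\to\infty$, so all but boundedly many zeros are regular, and a standard monotonicity/interlacing or Sturm-type comparison (as in the classical Jacobi case, cf. \cite{Szego}) shows there are no ``extra'' regular zeros between $1$ and the image under $x\mapsto\cos(x/n)$ of the first $k$ Bessel zeros that could spoil the counting. Concretely: any regular zero closer to $1$ than $\cos(j_{\alpha+r,k}/n)$ would, after rescaling, give a zero of the limit $g$ in $[0,j_{\alpha+r,k})$ other than those already accounted for, contradicting the location of the Bessel zeros; hence for $n$ large the $k$-th closest regular zero to $1$ is precisely the one tending to $\cos(j_{\alpha+r,k}/n)$, and taking $\theta^{(\alpha,\beta)}_{k,n}=\arccos x^{(\alpha,\beta)}_{k,n}$ yields $n\theta^{(\alpha,\beta)}_{k,n}\to j_{\alpha+r,k}=j_{\alpha+r_1+r_2,k}$.

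Part (b) is entirely analogous, now using \eqref{eq:XJPMehlerHeineBeta} in place of \eqref{eq:XJPMehlerHeineAlpha}: the rescaled polynomials $(-1)^{n-|\lambda|-|\mu|+r(\mu')} n^{-\beta-2r_1} P^{(\alpha,\beta)}_{\lambda,\mu,n}\!\left(-\cos(x/n)\right)$ converge uniformly on compacts to $d\cdot\bar\Omega^{(\alpha',\beta')}_{\lambda,\mu'}(-1)\, x^{-\beta-r_1+r_2}J_{\beta+r_1-r_2}(x)$, which is a nonzero entire function with a nonvanishing value at $0$ because $\beta+r>-1$ forces $\beta+r_1-r_2>-1-2r_2\ge\dots$; more carefully, one uses $\beta+r>-1$ together with $r_2\ge 0$, and the hypothesis $\bar\Omega^{(\alpha',\beta')}_{\lambda,\mu'}(-1)\neq 0$, to conclude the limit has simple positive zeros exactly at $j_{\beta+r_1-r_2,k}$ and no zero at the origin. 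Hurwitz's theorem plus the same counting argument based on Theorem \ref{thm:XJPN(n)} then gives $n\Theta^{(\alpha,\beta)}_{k,n}\to j_{\beta+r_1-r_2,k}$.

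The main obstacle I anticipate is not the convergence itself --- that is handed to us by Theorem \ref{thm:XJPMehlerHeine} --- but the \emph{counting of zeros}: proving that Hurwitz's theorem, which only controls zeros on compact sets in the rescaled variable, actually captures the $k$ zeros nearest the endpoint and that no regular zero sneaks in closer. One must combine the lower bound $N(n)\ge n-2(|\lambda|+|\mu|+r_2)$ with the fact that the limit function $g$ has its smallest positive zero at $j_{\alpha+r,1}$ (and nothing in $(0,j_{\alpha+r,1})$ or at $0$), ruling out spurious nearby zeros uniformly in $n$. Handling the exceptional zeros that may lie arbitrarily close to $\pm1$ in principle --- showing they do \emph{not}, asymptotically, because they instead converge to the zeros of $\Omega^{(\alpha,\beta)}_{\lambda,\mu}$ by Theorem \ref{thm:XJPExceptionalZeros} --- is the delicate compatibility check, but it can be done by noting those limiting zeros are a fixed finite set bounded away from the rescaled window $\cos(j_{\nu,k}/n)\to 1$.
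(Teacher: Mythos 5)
Your proposal follows exactly the paper's own route: the paper proves this corollary by applying Hurwitz's theorem \cite[Theorem 1.91.3]{Szego} to the Mehler--Heine limits \eqref{eq:XJPMehlerHeineAlpha} and \eqref{eq:XJPMehlerHeineBeta}, using the simplicity of the positive zeros of $x^{-\nu}J_{\nu}(x)$ and the conditions of Theorem \ref{thm:XJPN(n)}, which is precisely your argument. The extra bookkeeping you supply (ruling out spurious regular or exceptional zeros closer to the endpoint) is sound and in fact more detailed than the paper's one-line proof.
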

\begin{proof}
This corollary follows directly from applying Hurwitz theorem as described above. Note that we also need the conditions of Theorem \ref{thm:XJPN(n)} to be satisfied.
\end{proof}

\subsection{Weak macroscopic limit of the regular zeros}
When $\alpha>-1$ and $\beta>-1$, the weak macroscopic limit of the simple zeros of the classical Jacobi polynomial is the arcsine distribution $\frac{1}{\pi} \frac{1}{\sqrt{1-x^2}}dx$ (or arcsine law), see Theorem \ref{thm:JacArcsineLaw} below. For exceptional Jacobi polynomials, we know from Theorem \ref{thm:XJPN(n)} that the number of regular zeros tends to infinity when the degree tends to infinity. If we take the weak macroscopic limit of those regular zeros, it turns out that the same limiting distribution as for classical Jacobi polynomials is obtained. The proof is given in Section \ref{sec:ProofArcsineLaw}.

\begin{theorem} \label{thm:XJPArcsineLaw}
For any partition $\lambda$ and $\mu$, take $\alpha,\beta\in\mathbb{R}$ such that the conditions \eqref{Condition2GJP} and \eqref{Condition1GJP} are satisfied. Assume that $\alpha+r>-1$ and $\beta+r>-1$. Let $1>x^{(\alpha,\beta)}_{1,n}\geq\cdots\geq x^{(\alpha,\beta)}_{N(n),n}>-1$ denote the regular zeros of the exceptional Jacobi polynomial $P^{(\alpha,\beta)}_{\lambda, \mu,n}$ where $n\in\mathbb{N}_{\lambda,\mu}$. Then, for every bounded continuous function $f$ on $(-1,1)$, we have
\begin{equation} \label{eq:XJPArcsineLaw}
	\lim_{n\to\infty}\frac{1}{N(n)}\sum_{j=1}^{N(n)}f\left(x^{(\alpha,\beta)}_{j,n}\right)
		=\frac{1}{\pi}\int_{-1}^{1} \frac{f(x)}{\sqrt{1-x^2}}dx.
\end{equation}
\end{theorem}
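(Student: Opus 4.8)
The plan is to reduce the statement about the regular zeros of $P^{(\alpha,\beta)}_{\lambda,\mu,n}$ to the classical arcsine law for Jacobi polynomials (Theorem \ref{thm:JacArcsineLaw}) by controlling two things: first, the regular zeros are, up to finitely many exceptional zeros, "the same" as the zeros of a classical Jacobi polynomial of comparable degree in a suitable asymptotic sense; and second, the normalizing count $N(n)$ satisfies $N(n) = n + O(1)$, which already follows from Theorem \ref{thm:XJPN(n)} (since $n - 2(|\lambda|+|\mu|+r_2) \le N(n) \le n$). The main work is therefore to relate the regular zeros of the exceptional polynomial to those of a classical one.

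First I would recall the weak-* formulation: for a polynomial $p_n$ with zeros $\xi_{1,n},\dots,\xi_{d_n,n}$ one introduces the normalized zero-counting measure $\nu_n = \frac{1}{d_n}\sum_j \delta_{\xi_{j,n}}$, and the claim is that the restriction of $\nu_n$ to $(-1,1)$, renormalized by $N(n)/n$, converges weak-* to the arcsine measure $\tfrac1\pi (1-x^2)^{-1/2}\,dx$. The classical statement (Theorem \ref{thm:JacArcsineLaw}) gives exactly this for $P^{(\alpha,\beta)}_n$ with $\alpha,\beta>-1$. The key structural input is a representation of $P^{(\alpha,\beta)}_{\lambda,\mu,n}$ as a fixed-order differential/algebraic combination of classical Jacobi polynomials: from Definition of the exceptional polynomial as a Wronskian $\Wr[f_1,\dots,f_r,P^{(\alpha,\beta)}_s]$ with $s = n-|\lambda|-|\mu|+r_1$, expanding the determinant along the last column expresses $P^{(\alpha,\beta)}_{\lambda,\mu,n}$ (after dividing by the prefactor) as $\sum_{k=0}^r q_k(x)\,(P^{(\alpha,\beta)}_s)^{(k)}(x)$ with polynomial coefficients $q_k$ of bounded degree independent of $n$, where $q_r$ is (a shift of) the generalized Jacobi polynomial $\Omega^{(\alpha,\beta)}_{\lambda,\mu}$. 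Using the derivative formula \eqref{eq:JacobiDerivative}, each $(P^{(\alpha,\beta)}_s)^{(k)}$ is a constant multiple of $P^{(\alpha+k,\beta+k)}_{s-k}$, so that $P^{(\alpha,\beta)}_{\lambda,\mu,n}$ is a bounded-length linear combination $\sum_{k=0}^r \tilde q_k(x)\,P^{(\alpha+k,\beta+k)}_{s-k}(x)$ with $x$-polynomial coefficients of fixed degree.

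Next I would argue that such a bounded-order combination has essentially the same bulk zero distribution as a single classical Jacobi polynomial. The cleanest route is via logarithmic potentials / the ratio asymptotics of Jacobi polynomials away from $[-1,1]$: on compact subsets of $\mathbb{C}\setminus[-1,1]$ one has $P^{(\alpha+k,\beta+k)}_{s-k}(x)/P^{(\alpha,\beta)}_s(x) \to g_k(x)$ for an explicit non-vanishing analytic limit $g_k$ (this is standard for classical Jacobi polynomials, the ratio of polynomials of nearly equal degree with shifted parameters converges since the zero-counting measures both converge to the arcsine/equilibrium measure of $[-1,1]$). Hence $\frac1n \log|P^{(\alpha,\beta)}_{\lambda,\mu,n}(x)| \to \int \log|x-t|\,d\mu_{\mathrm{arcsine}}(t)$ locally uniformly on $\mathbb{C}\setminus[-1,1]$, exactly the potential of the arcsine measure on $[-1,1]$. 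By the standard principle (lower envelope theorem / unicity of the measure with given potential off its support, as used in \cite{Bonneux_Kuijlaars,Kuijlaars_Milson}), the full normalized zero-counting measure $\nu_n$ of $P^{(\alpha,\beta)}_{\lambda,\mu,n}$ converges weak-* to $\mu_{\mathrm{arcsine}}$ on $[-1,1]$. Finally, the exceptional zeros are at most $|\lambda|+|\mu|$ in number and (by Theorem \ref{thm:XJPExceptionalZeros}) stay in a bounded region near the finitely many zeros of $\Omega^{(\alpha,\beta)}_{\lambda,\mu}$, hence contribute a vanishing fraction $O(1)/n$; and any mass that $\nu_n$ might try to put on $\{-1,1\}$ is negligible because $\mu_{\mathrm{arcsine}}(\{\pm1\})=0$. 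Combining with $N(n)/n \to 1$ from Theorem \ref{thm:XJPN(n)} gives \eqref{eq:XJPArcsineLaw} for every bounded continuous $f$ on $(-1,1)$ after a routine truncation argument to handle the unboundedness of $(1-x^2)^{-1/2}$.

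The main obstacle I anticipate is making the passage from "locally uniform convergence of the logarithmic potential off $[-1,1]$" to "weak-* convergence of $\nu_n$ and hence of the empirical distribution of just the regular zeros in $(-1,1)$" fully rigorous, since a priori some zeros could escape to infinity or pile up near the endpoints: one must check tightness of $(\nu_n)$ (which follows because $P^{(\alpha,\beta)}_{\lambda,\mu,n}$ is a bounded-order combination of Jacobi polynomials whose zeros, together with the finitely many zeros of the coefficient polynomials $\tilde q_k$, all stay in a fixed bounded set) and then invoke the standard uniqueness theorem for the logarithmic potential to identify the limit. A secondary technical point is justifying the ratio asymptotics $P^{(\alpha+k,\beta+k)}_{s-k}/P^{(\alpha,\beta)}_s \to g_k$ with the correct accounting of the degree shift $k$ (fixed, so asymptotically irrelevant) and the parameter shift (also asymptotically irrelevant for the arcsine law), both of which are classical and can be cited from \cite{Szego} or imported verbatim from the Laguerre treatment in \cite{Bonneux_Kuijlaars}.
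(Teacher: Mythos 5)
Your plan is sound, but it follows a genuinely different route from the paper. The paper's proof is a two-line reduction: Lemma \ref{lem:XJPlincomJac} writes $P^{(\alpha,\beta)}_{\lambda,\mu,n}$ as a \emph{constant-coefficient} linear combination of the Jacobi polynomials $P^{(\alpha+r,\beta+r)}_{n},\dots,P^{(\alpha+r,\beta+r)}_{n-t}$ with $t$ fixed (this already needed the three-term recurrence, hence $\alpha+r,\beta+r>-1$), and then the Beardon--Driver interlacing result (Corollary \ref{cor:BeardonDriver}) guarantees that all but $O(1)$ of the intervals between consecutive zeros of $P^{(\alpha+r,\beta+r)}_n$ contain a regular zero of the exceptional polynomial; combined with $N(n)=n+O(1)$ from Theorem \ref{thm:XJPN(n)}, the classical arcsine law of Theorem \ref{thm:JacArcsineLaw} transfers immediately, since perturbing each zero within its interlacing interval and discarding $O(1)$ points does not change the weak limit. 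You instead stop at the decomposition with polynomial coefficients, $\sum_k \tilde q_k(x)P^{(\alpha+k,\beta+k)}_{s-k}(x)$, and run a potential-theoretic argument (ratio asymptotics off $[-1,1]$, convergence of $\tfrac1n\log|P^{(\alpha,\beta)}_{\lambda,\mu,n}|$ to the arcsine potential, unicity of the limit measure). This can be made to work and is close in spirit to arguments in the Hermite/Laguerre literature, but it is heavier: you must justify outer (ratio) asymptotics for Jacobi polynomials whose intermediate parameters $\alpha+k,\beta+k$ may be $\le -1$ (Szeg\H{o}'s classical statements are for parameters $>-1$, though the extension is true), and you must rule out cancellation by exploiting that the $k=r$ term carries the Pochhammer factor $(s+\alpha+\beta+1)_r\sim n^r$ and the nonvanishing coefficient $(1+x)^{r_2}\Omega^{(\alpha,\beta)}_{\lambda,\mu}(x)$.

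One assertion in your plan is not correct as stated: tightness of the full zero-counting measure does not follow merely from being ``a bounded-order combination of Jacobi polynomials whose zeros stay in a bounded set''; a linear combination of such polynomials can in principle have zeros far away, and Theorem \ref{thm:XJPExceptionalZeros} only asserts attraction of some exceptional zeros, not boundedness of all of them. This gap is repairable inside your own framework, either by the same dominance argument (for $n$ large the $k=r$ term dominates on any fixed circle $|x|=R$ avoiding the zeros of $\Omega^{(\alpha,\beta)}_{\lambda,\mu}$, so no zeros lie outside a fixed compact set), or more simply by working only with the measure of the regular zeros, which is supported in $[-1,1]$ and has total mass $N(n)=n+O(1)$ by Theorem \ref{thm:XJPN(n)}, and controlling the $O(1)$ remaining zeros' contribution to the logarithmic potential separately. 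Either repair works, but note that the paper's interlacing argument avoids all of these issues at once, which is why it is the shorter proof.
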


\subsection{Convergence of the exceptional zeros}
Our final result deals with the asymptotic behavior of the exceptional zeros. To this end, recall that the number of exceptional zeros is bounded as the degree tends to infinity, see Theorem \ref{thm:XJPN(n)}. The theorem states that these exceptional zeros are attracted by the simple zeros of the generalized Jacobi polynomial.

\begin{theorem} \label{thm:XJPExceptionalZeros}
For any partitions $\lambda$ and $\mu$, take $\alpha,\beta\in\mathbb{R}$ such that the conditions \eqref{Condition2GJP} and \eqref{Condition1GJP} are satisfied. Assume that $\alpha+r>-1$ and $\beta+r>-1$. Let $z_j$ be a simple zero of the generalized Jacobi polynomial $\Omega^{(\alpha,\beta)}_{\lambda,\mu}$ where $z_j\in\mathbb{C}\setminus[-1,1]$. Then this zero $z_j$ attracts an exceptional zero of the exceptional Jacobi polynomial $P^{(\alpha,\beta)}_{\lambda,\mu,n}$ as $n$ tends to infinity at a rate $O\left(n^{-1}\right)$.
That is, for $n$ large enough, we have 
\begin{equation} \label{eq:XJPExceptionalZeros}
	\min_{k = 1, \ldots, n-N(n)} 
	\left| z_j-z^{(\alpha,\beta)}_{k,n} \right| < \frac{c}{n}, \qquad n\in\mathbb{N}_{\lambda,\mu}
\end{equation}
for some positive constant $c$ and where $z^{(\alpha,\beta)}_{1,n},\dots,z^{(\alpha,\beta)}_{n-N(n),n}$ denote the exceptional zeros of the exceptional Jacobi polynomial $P^{(\alpha,\beta)}_{\lambda,\mu,n}$.
\end{theorem}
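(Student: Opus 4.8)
\noindent\emph{Proof proposal.}
The plan is to express $P^{(\alpha,\beta)}_{\lambda,\mu,n}$ as a linear combination of classical Jacobi polynomials whose top coefficient is (a multiple of) $\Omega^{(\alpha,\beta)}_{\lambda,\mu}$, and then run a Rouch\'e argument on circles of radius $c/n$ about $z_j$; this mirrors the Laguerre treatment in \cite{Bonneux_Kuijlaars}. First I would expand the Wronskian in \eqref{def:XJP1} along its last column. Writing $C_k$ for the minor obtained by deleting the $(k+1)$-th row of that Wronskian matrix (as in Proposition \ref{prop:degQ}) and using the derivative identity \eqref{eq:JacobiDerivative} for $\tfrac{d^k}{dx^k}P^{(\alpha,\beta)}_s$, this produces
\begin{equation*}
	P^{(\alpha,\beta)}_{\lambda,\mu,n}(x)
		= \sum_{k=0}^{r} (-1)^{k+r}\,\frac{(s+\alpha+\beta+1)_k}{2^k}\,Q_k(x)\,P^{(\alpha+k,\beta+k)}_{s-k}(x),
		\qquad s = n-|\lambda|-|\mu|+r_1,
\end{equation*}
where $Q_k := (1+x)^{(\beta+r_1+1)r_2}\det C_k$. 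By Proposition \ref{prop:degQ} each $Q_k$ is a polynomial, $\deg Q_k$ is controlled, and $Q_r$ is, up to a non-zero constant, equal to $(1+x)^{r_2}\,\Omega^{(\alpha,\beta)}_{\lambda,\mu}$ (the remark following that proposition); every summand has degree $n$, consistent with Lemma \ref{lem:XJPdeg}.

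Next I would normalise by dividing through by $\tfrac{(s+\alpha+\beta+1)_r}{2^r}P^{(\alpha+r,\beta+r)}_{s-r}(x)$. Here the hypotheses $\alpha+r>-1$ and $\beta+r>-1$ are essential: they ensure $P^{(\alpha+r,\beta+r)}_{s-r}$ is orthogonal, hence has all its zeros in $(-1,1)$, so the normalised function is analytic on a fixed neighbourhood of $z_j$ once $n$ is large. For $k<r$ the prefactor $(s+\alpha+\beta+1)_k/(s+\alpha+\beta+1)_r$ is $O(n^{k-r})$, while the classical strong (Szeg\H{o}) asymptotics of Jacobi polynomials outside $[-1,1]$ give, locally uniformly on $\mathbb{C}\setminus[-1,1]$,
\begin{equation*}
	\frac{P^{(\alpha+k,\beta+k)}_{s-k}(z)}{P^{(\alpha+r,\beta+r)}_{s-r}(z)}
		\;\longrightarrow\; \left(\frac{\sqrt{z^2-1}}{2}\right)^{r-k}.
\end{equation*}
Combining these, all terms with $k<r$ are $O(n^{-1})$ there, so
\begin{equation*}
	\frac{2^r}{(s+\alpha+\beta+1)_r}\cdot\frac{P^{(\alpha,\beta)}_{\lambda,\mu,n}(z)}{P^{(\alpha+r,\beta+r)}_{s-r}(z)}
		= G(z) + E_n(z),
\end{equation*}
where $G(z)$ is the non-zero constant multiple of $(1+z)^{r_2}\Omega^{(\alpha,\beta)}_{\lambda,\mu}(z)$ arising from $Q_r$, and $\sup_{|z-z_j|\le\rho_0}|E_n(z)|\le C/n$ for some small fixed $\rho_0>0$ with $\overline{D(z_j,\rho_0)}\subset\mathbb{C}\setminus[-1,1]$ and containing neither $-1$ nor any other zero of $\Omega^{(\alpha,\beta)}_{\lambda,\mu}$.

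Finally I would apply Rouch\'e's theorem. Since $z_j$ is a simple zero of $\Omega^{(\alpha,\beta)}_{\lambda,\mu}$ and $z_j\neq-1$, we have $G(z_j)=0$ and $G'(z_j)\neq0$, so on the circle $|z-z_j|=c/n$ one has $|G(z)|\ge \tfrac12|G'(z_j)|\,c/n$ for $n$ large, whereas $|E_n(z)|\le C/n$; taking $c>2C/|G'(z_j)|$ gives $|E_n|<|G|$ on that circle. Hence $G+E_n$, and therefore $P^{(\alpha,\beta)}_{\lambda,\mu,n}$ (the normalising factor being non-vanishing there), has exactly one zero inside $D(z_j,c/n)$; for $n$ large this zero lies off $[-1,1]$, hence is an exceptional zero, which is precisely \eqref{eq:XJPExceptionalZeros}.

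The step I expect to be the main obstacle is the second one: justifying the Jacobi ratio asymptotic with an error bound that is \emph{uniform} on $\overline{D(z_j,\rho_0)}$. The delicate point is that for $k<r$ the shifted parameters $\alpha+k,\beta+k$ may be $\le-1$, in which case $P^{(\alpha+k,\beta+k)}_{s-k}$ carries a bounded number of zeros off $(-1,1)$; one must argue these stray zeros converge to $[-1,1]$ (using, e.g., the results in \cite{Kuijlaars_Martinez_Orive}) so that they exit the fixed disk around $z_j$ once $n$ is large, after which the ratio extends analytically and converges uniformly there. Once this is settled, the remaining bookkeeping is identical to the Laguerre case in \cite{Bonneux_Kuijlaars}.
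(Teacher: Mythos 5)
Your proposal is correct, but it proves the theorem by a genuinely different route than the paper. The paper never touches strong asymptotics: it performs a Darboux--Crum transformation with the seed functions of Table \ref{tab:2}, uses trivial monodromy (via \cite{GarciaFerrero_GomezUllate_Milson} and \cite{Duistermaat_Grunbaum}) to show that $\bigl(P^{(\alpha,\beta)}_{\lambda,\mu,n}\bigr)^2 W^{(\alpha,\beta)}_{\lambda,\mu}$ has zero residue at every pole in $\mathbb{C}\setminus\{-1,1\}$ (Lemma \ref{lem:Residue}), and from this derives the partial-fraction identity \eqref{eq:XJPExceptionalZeros:Proof:2}; since the sum over the regular zeros grows like $n$ (Theorem \ref{thm:XJPN(n)}, Corollary \ref{cor:BeardonDriver}) while the right-hand side is independent of $n$, one term of the finite sum over exceptional zeros must grow like $n$, which is exactly \eqref{eq:XJPExceptionalZeros}. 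You instead take the cofactor expansion \eqref{eq:XJPlincomJac:1} (which the paper uses only for the Mehler--Heine asymptotics), normalize by the dominant term $P^{(\alpha+r,\beta+r)}_{s-r}$, and run Rouch\'e on shrinking disks. Your route buys more than the paper's: it localizes exactly one zero of $P^{(\alpha,\beta)}_{\lambda,\mu,n}$ in $D(z_j,c/n)$ and avoids the monodromy/residue machinery; its price is the uniform outer asymptotics of Jacobi polynomials with general real parameters. On that point your self-identified obstacle is smaller than you think: stray zeros of the numerators $P^{(\alpha+k,\beta+k)}_{s-k}$ are irrelevant, since only the denominator's zeros (all in $(-1,1)$ because $\alpha+r,\beta+r>-1$) affect analyticity of the ratios on the fixed disk, and all you need is their uniform boundedness there, which follows from Szeg\H{o}'s outer asymptotics, valid for arbitrary real parameters because it comes from the generating function rather than from orthogonality \cite{Szego}. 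Two cosmetic corrections: the summands in your expansion have degree at most $n$, not exactly $n$; and the exceptional-zero theorems in \cite{Bonneux_Kuijlaars} and \cite{Kuijlaars_Milson} are themselves proved by the residue/electrostatic route, so your Rouch\'e argument does not mirror them --- it replaces them.
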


\begin{figure}[t]
	\centering
	\includegraphics[totalheight=8cm, trim=2.45cm 17.5cm 8.75cm 2cm,clip=true]{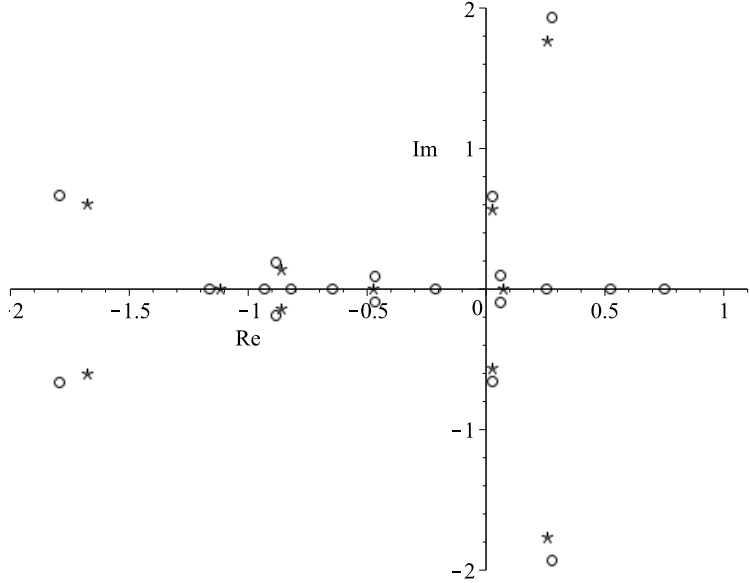}
	\caption{Zeros of the generalized (stars) and exceptional (open circles) Jacobi polynomial associated with $\lambda=(3,1,1)$, $\mu=(3,3)$, $\alpha=0$, $\beta=\frac{1}{2}$ and $n=20$.}
	\label{fig:1}
\end{figure}

\begin{remark}
Similar results for Theorem \ref{thm:XJPExceptionalZeros} are obtained in the Hermite case \cite[Theorem 2.3]{Kuijlaars_Milson} and Laguerre case \cite[Theorem 5]{Bonneux_Kuijlaars}. A remarkable difference is that in the Jacobi case, the rate of convergence is faster, i.e., $O\left(n^{-1}\right)$, where for the Hermite and Laguerre case it is $O\left(n^{-\frac{1}{2}}\right)$. The difference in the results is due to the fact that all zeros of the Jacobi polynomial lie in the bounded region $(-1,1)$ whereas for Hermite and Laguerre polynomials the region is unbounded. For all three results, the speed of convergence is not claimed to be sharp. However, simulations by the computer software Maple suggest that all bounds are in fact sharp.
\end{remark}

As a special case of Theorem \ref{thm:XJPExceptionalZeros}, consider the situation where the exceptional orthogonal polynomials form a complete system, see Lemma \ref{lem:XJPorth}. Then $N(n)=n-|\lambda|-|\mu|$ for $n\in\mathbb{N}_{\lambda,\mu}$ large enough so that there are $|\lambda|+|\mu|$ exceptional zeros. Moreover, all zeros of the generalized Jacobi polynomial are in $\mathbb{C}\setminus[-1,1]$, see Lemma \ref{lem:OmegaZerosOrth}. Therefore, if we assume all zeros of the generalized Jacobi polynomial to be simple, then each zero attracts exactly 1 exceptional zero. If we label the simple zeros by $z_j$, then we can relabel the zeros of the exceptional Jacobi polynomial in such a way that $z_{j,n}^{(\alpha,\beta)}$ is close to $z_j$ and
\begin{equation*}
	z_{j,n}^{(\alpha,\beta)} = z_j + O\left(\frac{1}{n}\right) \qquad
	\text{ as } n \to \infty.
\end{equation*}
Moreover, if $z_j\in\mathbb{R}\setminus[-1,1]$, then $z^{(\alpha,\beta)}_{j,n}$ must be real and outside the interval $(-1,1)$ for $n$ large enough; otherwise it would be a regular zero or if it was non-real, its complex conjugate would be a zero too and hence $z_j$ would attract two zeros which is prohibited. \\

Our results are numerically verified via the computer software Maple. In Figure \ref{fig:1} we plotted an example where the zeros of the generalized Jacobi polynomial are all simple. As we can see in the figure, each exceptional zero is close to a zero of the generalized Jacobi polynomial. Moreover, the zeros of the generalized Jacobi polynomial in $(-1,1)$ seem to attract two exceptional zeros.

\subsection{Conjecture of simple zeros}
It is well-known that the zeros of the Jacobi polynomial are simple when the parameters satisfy $\alpha,\beta>-1$. For generalized Jacobi polynomials, which include exceptional Jacobi polynomial, this property of simple zeros is far from trivial. According to Proposition 3.3 in \cite{Duistermaat_Grunbaum}, the multiplicity of each zero is a triangular number. The following examples show that for general partitions and parameters, the multiplicity is not necessarily 1, neither inside nor outside the orthogonality region $(-1,1)$.
\begin{align*}
	& \Omega^{(1,1)}_{(1,1),(1)} = -15(x+1)^3
	&& \Omega^{(\frac{5}{2},-\frac{3}{2})}_{(2),(2)} = -\frac{105}{128}(4x+5)(2x+1)^3
	\\
	& \Omega^{(\frac{9}{2},\frac{9}{2})}_{(1,1),(1)} = - \frac{5005}{8} x^3
	&& \Omega^{(\frac{1}{2},-\frac{1}{2})}_{(2),(4)} = \frac{945}{2048} \left(x-\left(\frac{1+\sqrt{5}}{4}\right)\right)^3\left(x-\left(\frac{1-\sqrt{5}}{4}\right)\right)^3 
\end{align*}
All the examples we found, dealing with non-simple zeros, are such that the corresponding exceptional Jacobi polynomials do not form a complete set. We therefore offer the following conjecture which suggests that the assumption in Theorem \ref{thm:XJPExceptionalZeros} dealing with simple zeros of the generalized Jacobi polynomial is not a restriction.

\begin{conjecture}\label{conjecture}
For any partitions $\lambda$ and $\mu$, take $\alpha>-1$ and $\beta > m_1$ such that the conditions \eqref{Condition1GJP} are satisfied. If $\lambda$ is an even partition, then the zeros of the generalized Jacobi polynomial $\Omega^{(\alpha,\beta)}_{\lambda,\mu}$ are simple. 
\end{conjecture}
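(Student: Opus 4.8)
}
This conjecture is the Jacobi incarnation of Veselov's conjecture on Wronskians of Hermite polynomials \cite{Felder_Hemery_Veselov} and of the analogous statement in the Laguerre case \cite{Bonneux_Kuijlaars}, and like those it appears to be genuinely hard; what follows is a plan rather than a proof. The natural framework is the Schr\"odinger/Darboux picture of Section~\ref{sec:XJP}. Under the change of variables $x=\cos(2\vartheta)$, and after stripping the prefactors $\sin(\vartheta)^{\alpha+\frac12}\cos(\vartheta)^{\beta+\frac12}$, the polynomial $\Omega^{(\alpha,\beta)}_{\lambda,\mu}$ becomes, up to a non-vanishing factor, the Wronskian $\mathcal W=\Wr[\varphi^{(\alpha,\beta)}_{n_1},\dots,\varphi^{(\alpha,\beta)}_{n_{r_1}},\varphi^{(\alpha,-\beta)}_{m_1},\dots,\varphi^{(\alpha,-\beta)}_{m_{r_2}}]$ of quasi-rational eigenfunctions of the Darboux--P\"oschl--Teller operator \eqref{eq:DVJac2}. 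The exceptional Jacobi operator is then the rational Darboux--Crum transform with potential $\widetilde V=V-2(\log\mathcal W)''$ \cite{Crum,Darboux,Grandati_Berard}, and a zero of $\Omega^{(\alpha,\beta)}_{\lambda,\mu}$ of multiplicity $\kappa$ in $\mathbb C\setminus[-1,1]$ corresponds to a regular-singular point of the transformed equation at which $\widetilde V$ has a double pole with leading coefficient $\kappa(\kappa+1)$. The conjecture is thus equivalent to the assertion that, under the completeness hypotheses, $\widetilde V$ has only simple finite singularities.

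The plan is then: \emph{(i)} use the hypotheses ($\lambda$ even, $\alpha>-1$, $\beta>m_1$) to recognise the underlying Darboux transformation as a composition of Krein--Adler-admissible blocks --- the even-partition condition forces the associated sequence $n_\lambda$ to split into consecutive pairs $(n,n-1)$ --- so that, as in Lemma~\ref{lem:XJPorth} and via the extension of the Krein--Adler theorem in \cite{GarciaFerrero_GomezUllate}, $\widetilde V$ defines a self-adjoint operator on $(-1,1)$ all of whose local solutions are single-valued around every finite singularity, i.e.\ the operator has trivial monodromy; \emph{(ii)} invoke the local analysis of trivial-monodromy Schr\"odinger operators due to Duistermaat and Gr\"unbaum: at a finite singularity the ``locally nilpotent'' constraints recalled in \cite[Proposition~3.3]{Duistermaat_Grunbaum} already force the multiplicity $\kappa$ to be a triangular number and, more usefully, pin down the entire Laurent tail of $\widetilde V$ there; \emph{(iii)} combine these local constraints with the global information $\deg\Omega^{(\alpha,\beta)}_{\lambda,\mu}=|\lambda|+|\mu|$ and the explicit list of exceptional degrees (Lemma~\ref{lem:XJPdeg}) to exclude $\kappa\ge 3$.

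A complementary idea, special to the Jacobi setting, is to argue by contradiction through the zeros of $P^{(\alpha,\beta)}_{\lambda,\mu,n}$ themselves. By Theorem~\ref{thm:XJPExceptionalZeros} every zero $z_0$ of $\Omega^{(\alpha,\beta)}_{\lambda,\mu}$ attracts exceptional zeros of $P^{(\alpha,\beta)}_{\lambda,\mu,n}$ at the rate $O(n^{-1})$, and the compactness of $[-1,1]$ makes this attraction tight. One would try to show that a zero of multiplicity $\kappa$ forces $\kappa$ distinct exceptional zeros of $P^{(\alpha,\beta)}_{\lambda,\mu,n}$ to coalesce at $z_0$ as $n\to\infty$, and that such a coalescence is incompatible with the fact that the point spectrum of the self-adjoint operator attached to a complete system is simple (or, more concretely, with a sharpened local expansion of $P^{(\alpha,\beta)}_{\lambda,\mu,n}$ near $z_0$ in the spirit of the proof of Theorem~\ref{thm:XJPExceptionalZeros}).

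The hard part is step \emph{(iii)}: ruling out the accidental configurations with $\kappa\ge 3$ is exactly the obstruction that keeps Veselov's conjecture \cite{Felder_Hemery_Veselov} open beyond special partition shapes, and the even-partition hypothesis is precisely what ought to make these accidents impossible --- yet deducing this from the Duistermaat--Gr\"unbaum constraints does not seem routine. As a first step I would settle the cases $r_1=0$ or $r_2=0$, where by Theorem~\ref{thm:ShiftMaya} and Lemma~\ref{lem:lemma4} the polynomial $\Omega^{(\alpha,\beta)}_{\lambda,\mu}$ reduces to a single Wronskian of classical Jacobi polynomials, and attempt to transport the partial results available for Wronskians of Hermite and Laguerre polynomials (including the Hermite treatment in \cite{Kuijlaars_Milson} and the Laguerre discussion in \cite{Bonneux_Kuijlaars}) through the classical limit relations between these families; and then the two-row and two-column partitions, where an explicit description of the zero locus is within reach.
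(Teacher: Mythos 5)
You should be aware that the statement you are treating is not proved in the paper at all: it appears as Conjecture~\ref{conjecture}, supported only by the worked examples with non-simple zeros (all of which violate the completeness hypotheses), by numerical evidence, and by analogy with the Veselov conjecture \cite{Felder_Hemery_Veselov} and the Laguerre analogue in \cite{Bonneux_Kuijlaars}. So there is no proof in the paper to compare against, and your submission --- as you say yourself --- is a plan rather than a proof, so the statement remains unproven after your text. The ingredients you assemble are essentially the ones the paper itself gestures at around the conjecture: the triangular-multiplicity constraint from \cite[Proposition~3.3]{Duistermaat_Grunbaum} (quoted verbatim in Section~\ref{sec:ZerosResults}), the trivial-monodromy framework used in Lemma~\ref{lem:Residue}, and the possible Krein--Adler route via \cite{GarciaFerrero_GomezUllate} mentioned after Lemma~\ref{lem:OmegaZerosOrth}. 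That is a reasonable map of the terrain, but the entire content of the conjecture sits in your step \emph{(iii)}, which you leave open.

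Two concrete points where the plan as written would not go through. First, the local Duistermaat--Gr\"unbaum analysis gives less than you state: at a zero of $\Omega^{(\alpha,\beta)}_{\lambda,\mu}$ of multiplicity $\kappa$ the transformed potential has a double pole with coefficient $2\kappa$, and trivial monodromy forces $2\kappa=\nu(\nu+1)$ for some non-negative integer $\nu$, i.e.\ $\kappa$ triangular (not a coefficient $\kappa(\kappa+1)$); this is exactly the constraint already recorded in the paper, and it excludes nothing beyond non-triangular multiplicities --- the accidental cases $\kappa=3,6,\dots$ are untouched, and steps \emph{(i)}--\emph{(ii)} give no mechanism by which the hypotheses ($\lambda$ even, $\alpha>-1$, $\beta>m_1$) enter to rule them out. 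Second, your ``complementary idea'' is circular: Theorem~\ref{thm:XJPExceptionalZeros} is stated, and its proof via the expansion \eqref{eq:Taylorexpansion} and the identity \eqref{eq:XJPExceptionalZeros:Proof:2} is carried out, only for a \emph{simple} zero $z_j$ of $\Omega^{(\alpha,\beta)}_{\lambda,\mu}$, so it cannot be invoked to describe the attraction of exceptional zeros to a putative multiple zero; one would first have to extend the zero-residue/electrostatic identity to poles of higher order, which is precisely the missing analysis. The appeal to simplicity of the point spectrum is also not probative, since coalescence of polynomial zeros at a point of $\mathbb{C}\setminus[-1,1]$ has no direct spectral meaning. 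In short: your proposal correctly identifies the known partial information and honestly flags the gap, but the gap is the conjecture itself, which neither you nor the paper closes.
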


This conjecture is closely related to the Veselov conjecture \cite{Felder_Hemery_Veselov} dealing with simple zeros of the Wronskian of an arbitrary (finite) sequence of Hermite polynomials, and with Conjecture 1 in \cite{Bonneux_Kuijlaars} for the Laguerre case.

\section{Zeros of exceptional Jacobi polynomials: proofs}\label{sec:ZerosProofs}
In this section we give the proofs of the results presented in Section \ref{sec:ZerosResults}. Similar ideas as in the Hermite \cite{Kuijlaars_Milson} and Laguerre \cite{Bonneux_Kuijlaars} cases are used. 

\subsection{Proof of Theorem \ref{thm:XJPN(n)}: a lower bound of $N(n)$}\label{sec:ProofN(n)}
We start with the following lemma for Jacobi polynomials.

\begin{lemma}\label{lem:JacobiShiftParameter}
Take $\alpha,\beta\in\mathbb{R}$ and two non-negative integers $n$ and $N$ such that 
\begin{equation}\label{eq:JacobiShiftParameterRestriction}
	\alpha+\beta+n\notin\{-1,-2,\dots,-n-2N\}.
\end{equation}
Then the Jacobi polynomial $P_n^{(\alpha,\beta)}$ is a linear combination of the Jacobi polynomials
\begin{equation*}
	P_{n}^{(\alpha+N,\beta+N)}, P_{n-1}^{(\alpha+N,\beta+N)}, \dots, P_{n-t}^{(\alpha+N,\beta+N)}
\end{equation*} 
where $t=2N$ is independent of $n$.
\end{lemma}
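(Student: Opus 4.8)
\emph{Plan.} The plan is to reduce the statement to the case $\alpha>-1$, $\beta>-1$ — where it falls out of orthogonality — and then to remove that restriction by analytic continuation in the parameters. Put $d:=\min\{2N,n\}$. A routine preliminary observation is that the hypothesis \eqref{eq:JacobiShiftParameterRestriction} guarantees $\deg P^{(\alpha,\beta)}_n=n$ and $\deg P^{(\alpha+N,\beta+N)}_{n-j}=n-j$ for every $0\le j\le d$: for each of these polynomials the no-degree-reduction condition recalled after \eqref{eq:JacobiExplicit} translates into the exclusion, for $\alpha+\beta+n$, of a block of consecutive integers that is contained in $\{-1,-2,\dots,-n-2N\}$. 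Since the polynomials $P^{(\alpha+N,\beta+N)}_n,P^{(\alpha+N,\beta+N)}_{n-1},\dots,P^{(\alpha+N,\beta+N)}_{n-d}$ then have the distinct degrees $n,n-1,\dots,n-d$, the lemma is equivalent to the vanishing of the remainder of $P^{(\alpha,\beta)}_n$ after it is reduced, highest degree first, modulo their span $V_{\alpha,\beta}$.

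First I would treat the case $\alpha>-1$, $\beta>-1$. Then also $\alpha+N>-1$ and $\beta+N>-1$, so $\{P^{(\alpha+N,\beta+N)}_k\}_{k=0}^{n}$ is a basis of the space of polynomials of degree $\le n$, orthogonal on $[-1,1]$ for the weight $(1-x)^{\alpha+N}(1+x)^{\beta+N}=(1-x^2)^N(1-x)^{\alpha}(1+x)^{\beta}$. Writing $P^{(\alpha,\beta)}_n=\sum_{k=0}^{n}a_k P^{(\alpha+N,\beta+N)}_k$, the coefficient $a_k$ is a positive multiple of
\[
	\int_{-1}^{1} P^{(\alpha,\beta)}_n(x)\,\bigl[(1-x^2)^N P^{(\alpha+N,\beta+N)}_k(x)\bigr]\,(1-x)^{\alpha}(1+x)^{\beta}\,dx .
\]
The bracketed factor is a polynomial of degree $k+2N$; if $k<n-2N$ this is $<n$, so the integral vanishes because $P^{(\alpha,\beta)}_n$ is orthogonal to every polynomial of lower degree, cf. \eqref{eq:JacobiOrthogonal}. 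Hence $a_k=0$ for $k<n-2N$, which is exactly the assertion when $\alpha,\beta>-1$ (with $t=2N$).

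To drop the restriction on the parameters I would run an analytic-continuation argument. Determine $\gamma_0(\alpha,\beta),\dots,\gamma_d(\alpha,\beta)$ successively so that $P^{(\alpha,\beta)}_n-\sum_{j=0}^{d}\gamma_j P^{(\alpha+N,\beta+N)}_{n-j}$ has degree $\le n-d-1$; by construction each $\gamma_j$ is a rational function of $(\alpha,\beta)$ whose denominator is a product of the leading coefficients of the polynomials $P^{(\alpha+N,\beta+N)}_{n-i}$, hence nonzero — and $\gamma_j$ regular — precisely when \eqref{eq:JacobiShiftParameterRestriction} holds (here I use that by \eqref{eq:JacobiExplicit} every coefficient of every Jacobi polynomial is a polynomial in the two parameters). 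Consequently the finitely many coefficients of the remainder $R(\alpha,\beta):=P^{(\alpha,\beta)}_n-\sum_{j=0}^{d}\gamma_j P^{(\alpha+N,\beta+N)}_{n-j}$ are rational functions of $(\alpha,\beta)$ regular wherever \eqref{eq:JacobiShiftParameterRestriction} holds, and the previous paragraph shows they vanish identically on the open set $\{\alpha>-1,\ \beta>-1\}$. A rational function vanishing on a nonempty open set is the zero function, so $R(\alpha,\beta)=0$ for all $(\alpha,\beta)$ satisfying \eqref{eq:JacobiShiftParameterRestriction}; that is, $P^{(\alpha,\beta)}_n=\sum_{j=0}^{d}\gamma_j(\alpha,\beta)\,P^{(\alpha+N,\beta+N)}_{n-j}$, the claimed linear combination with $t=2N$ (with the usual convention $P^{(\alpha+N,\beta+N)}_{n-j}\equiv0$ when $n-j<0$, which can only occur if $d<2N$).

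The computations are all routine; the only point requiring genuine care is the bookkeeping that shows the single hypothesis \eqref{eq:JacobiShiftParameterRestriction} simultaneously pins down the degrees of $P^{(\alpha,\beta)}_n$ and of each $P^{(\alpha+N,\beta+N)}_{n-j}$ and keeps the coefficients $\gamma_j$ pole-free, so that the continuation argument is valid on the whole set where the lemma is stated — this amounts to checking that a handful of arithmetic progressions sit inside $\{-1,-2,\dots,-n-2N\}$. A purely computational alternative would bypass continuation altogether: prove the case $N=1$ as an explicit three-term contiguous relation expressing $P^{(\alpha,\beta)}_n$ through $P^{(\alpha+1,\beta+1)}_n$, $P^{(\alpha+1,\beta+1)}_{n-1}$, $P^{(\alpha+1,\beta+1)}_{n-2}$ (obtainable from \eqref{eq:JacobiDerivative} and the standard contiguity for Jacobi polynomials), then iterate it $N$ times while tracking the shrinking-then-stabilizing window of degrees and the accumulating parameter restrictions.
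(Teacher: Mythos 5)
Your proposal is correct. The first half is exactly the paper's argument: for $\alpha,\beta>-1$ you expand $P^{(\alpha,\beta)}_n$ in the orthogonal basis $\{P^{(\alpha+N,\beta+N)}_k\}$, absorb $(1-x^2)^N$ into the test polynomial, and kill all coefficients with $k<n-2N$ by orthogonality of $P^{(\alpha,\beta)}_n$ to lower-degree polynomials. Where you diverge is the extension to general parameters: the paper quotes Askey's explicit ${}_3F_2$ formula for the connection coefficients $c^{\alpha,\beta,N}_{i,n}$ and argues that these meromorphic functions of $(\alpha,\beta)$, being zero for $\alpha,\beta>-1$ and $i<n-2N$, vanish identically; you instead define the coefficients $\gamma_j$ by triangular degree reduction against $P^{(\alpha+N,\beta+N)}_{n},\dots,P^{(\alpha+N,\beta+N)}_{n-d}$, observe that they (and hence the remainder's coefficients) are rational in $(\alpha,\beta)$ with denominators built from leading coefficients that condition \eqref{eq:JacobiShiftParameterRestriction} keeps nonzero, identify them with the orthogonality coefficients on the open set $\{\alpha>-1,\beta>-1\}$ by uniqueness of the expansion in polynomials of distinct degrees, and conclude by the fact that a rational function vanishing on a nonempty open set vanishes identically. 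This buys self-containedness (no citation of the explicit hypergeometric connection formula) at the price of the degree bookkeeping you describe, which indeed checks out: for each $0\le j\le d$ the forbidden values for the leading coefficient of $P^{(\alpha+N,\beta+N)}_{n-j}$ form the block $\alpha+\beta+n\in\{2j-2N-n,\dots,j-2N-1\}\subset\{-1,\dots,-n-2N\}$. One small overstatement: the denominators are nonzero \emph{whenever} \eqref{eq:JacobiShiftParameterRestriction} holds, not ``precisely when'' (the union of these blocks can be a proper subset of $\{-1,\dots,-n-2N\}$), but only the stated implication is needed, so the argument stands.
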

\begin{proof}
Fix $\alpha,\beta\in\mathbb{R}$ and $n,N$ such that \eqref{eq:JacobiShiftParameterRestriction} is satisfied. Then,
\begin{align*}
	&\deg \left(P_n^{(\alpha,\beta)}(x)\right) = n, \\
	&\deg \left(P^{(\alpha+N,\beta+N)}_{i}(x)\right) = i, \qquad i=0,\dots,n.
\end{align*}	
Hence we may write
\begin{equation*}
	P_n^{(\alpha,\beta)}(x)
		= \sum_{i=0}^{n} c^{\alpha,\beta,N}_{i,n}  P^{(\alpha+N,\beta+N)}_{i}(x)
\end{equation*}
for some $c^{\alpha,\beta,N}_{i,n}\in\mathbb{R}$. Now we have to show that $c^{\alpha,\beta,N}_{i,n}=0$ when $i<n-2N$. We approach by first assuming $\alpha,\beta>-1$ and then considering general $\alpha,\beta\in\mathbb{R}$.

If $\alpha,\beta>-1$, then the results follow directly from the orthogonality of the Jacobi polynomials. To be precise, the orthogonality of the Jacobi polynomials $\left(P^{(\alpha+N,\beta+N)}_{k}\right)_{k}$ gives
\begin{align*}
	c^{\alpha,\beta,N}_{i,n}
		&= \frac{1}{\gamma^{2}_i} \int_{-1}^{1} P_n^{(\alpha,\beta)}(x) P^{(\alpha+N,\beta+N)}_{i}(x) (1-x)^{\alpha+N} (1+x)^{\beta+N} dx \\
		&= \frac{1}{\gamma^{2}_i} \int_{-1}^{1}(1-x^2)^N P^{(\alpha+N,\beta+N)}_{i}(x)  P_n^{(\alpha,\beta)}(x) (1-x)^{\alpha} (1+x)^{\beta} dx
\end{align*}
for some normalization constant $\gamma^{2}_i\in\mathbb{R}\setminus\{0\}$ (as we do not have orthonormality). Next, we use the orthogonality of the Jacobi polynomials $\left(P^{(\alpha,\beta)}_{k}\right)_{k}$ to conclude that $c^{\alpha,\beta,N}_{i,n}=0$ if $2N+i<n$. This argument is also used in the proof of Theorem 3.21 in \cite{Shen_Tang_Wang}.

For general parameters $\alpha$ and $\beta$, the constants $c^{\alpha,\beta,N}_{i,n}$ can be computed exactly and can be expressed via the generalized hypergeometric functions, see \cite[Equation (7.28)]{Askey},
\begin{multline*}
	c^{\alpha,\beta,N}_{i,n}
		= \frac{\Gamma(i+\alpha+\beta+2N+1)\Gamma(n+i+\alpha+\beta+1)\Gamma(n+\alpha+1)}{\Gamma(n+\alpha+\beta+1)\Gamma(i+\alpha+1)\Gamma(2i+\alpha+\beta+2N+1)(n-i)!} \\
		\pFq{3}{2}{-n+i,n+i+\alpha+\beta+1,i+\alpha+N+1}{i+\alpha+1,2i+\alpha+\beta+2N+2}{1},
\end{multline*}
Hence we can interpret these constants as meromorphic functions in the variables $\alpha,\beta\in\mathbb{C}$. When these variables are real such that $\alpha,\beta>-1$, we already found by orthogonality that they equal zero when $i<n-2N$. As a direct consequence, the meromorphic function $c^{\alpha,\beta,N}_{i,n}$ is zero everywhere when $i<n-2N$. This establishes the result. 
\end{proof}

A similar result holds true for exceptional Jacobi polynomials. Observe that we need conditions \eqref{Condition2XJPbis} to be satisfied (and not just conditions \eqref{Condition2XJP}) as we want to apply Lemma \ref{lem:JacobiShiftParameter}.

\begin{lemma}\label{lem:XJPlincomJac}
For any partitions $\lambda$ and $\mu$, take $\alpha,\beta\in\mathbb{R}$ and $n\in\mathbb{N}_{\lambda,\mu}$ such that $\alpha+r>-1$, $\beta+r>-1$ and the conditions \eqref{Condition2XJPbis} and \eqref{Condition1XJP} are satisfied. Then, the exceptional Jacobi polynomial $P^{(\alpha,\beta)}_{\lambda,\mu,n}$ is a linear combination of the Jacobi polynomials  
\begin{equation*}
	P^{(\alpha+r,\beta+r)}_{n}, P^{(\alpha+r,\beta+r)}_{n-1},\dots,P^{(\alpha+r,\beta+r)}_{n-t}
\end{equation*}
where $t=2(|\lambda|+|\mu|+r_2)$ is independent of $n$. 
\end{lemma}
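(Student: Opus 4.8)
The plan is to expand the exceptional Jacobi polynomial $P^{(\alpha,\beta)}_{\lambda,\mu,n}$ along its Wronskian definition \eqref{def:XJP1}, expanding the determinant along the last column (the column built from the extra Jacobi polynomial $P^{(\alpha,\beta)}_s$ and its derivatives), and then identify each cofactor with one of the polynomials $Q_k$ from Proposition \ref{prop:degQ} so that the degree bounds there control everything. First I would write the $(r+1)\times(r+1)$ matrix whose determinant (times the prefactor) equals $P^{(\alpha,\beta)}_{\lambda,\mu,n}$; its last column has entries $\frac{d^k}{dx^k}P^{(\alpha,\beta)}_s$ for $k=0,\dots,r$. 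Using \eqref{eq:JacobiDerivative}, we have $\frac{d^k}{dx^k}P^{(\alpha,\beta)}_s = \frac{(s+\alpha+\beta+1)_k}{2^k}P^{(\alpha+k,\beta+k)}_{s-k}$, so a Laplace expansion along the last column gives
\begin{equation*}
	P^{(\alpha,\beta)}_{\lambda,\mu,n}
		= \sum_{k=0}^{r} (-1)^{k}\,\frac{(s+\alpha+\beta+1)_k}{2^k}\,(1+x)^{(\beta+r_1+1)r_2}\,\det(C_k)\,P^{(\alpha+k,\beta+k)}_{s-k},
\end{equation*}
where $C_k$ is exactly the minor obtained by deleting the $(k+1)$-th row and the last column, i.e.\ the matrix appearing in Proposition \ref{prop:degQ} with $R_i$ the monic renormalizations of the Jacobi polynomials $f_i$. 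Matching the prefactor $(1+x)^{(\beta+r_1+1)r_2}$ against the prefactor $(x+c)^{(r_1+1-\beta)r_2}$ in Proposition \ref{prop:degQ} (absorbing the powers of $(1+x)$ appropriately, just as in the passage from \eqref{eq:OmegaLaMu} to \eqref{eq:OmegaDef}), each term becomes a constant times $Q_k$ times $P^{(\alpha+k,\beta+k)}_{s-k}$.

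Next I would invoke Proposition \ref{prop:degQ} to bound $\deg Q_k \le \sum_i \deg R_i - \tfrac{r_1(r_1-1)}{2} - \tfrac{r_2(r_2-1)}{2} - r_1 + k = |\lambda|+|\mu|-r_1+k$, using that $\sum_i \deg R_i = \sum n_i + \sum m_j = |\lambda|+|\mu|+\tfrac{r_1(r_1-1)}{2}+\tfrac{r_2(r_2-1)}{2}$. Meanwhile $\deg P^{(\alpha+k,\beta+k)}_{s-k} = s-k = n - |\lambda|-|\mu|+r_1-k$ (degree reduction is excluded by \eqref{Condition2XJPbis}, which gives full degree for these shifted Jacobi polynomials for $k\le r$ and even a bit beyond). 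Hence each summand $Q_k\,P^{(\alpha+k,\beta+k)}_{s-k}$ has degree at most $(|\lambda|+|\mu|-r_1+k)+(n-|\lambda|-|\mu|+r_1-k) = n$, which is consistent, but the key point is that $P^{(\alpha+k,\beta+k)}_{s-k}$ is a polynomial of degree $s-k$ with index $s-k \ge s-r$, so it is a linear combination of $\{x^j : 0\le j\le s-k\}$; more usefully, apply Lemma \ref{lem:JacobiShiftParameter} to rewrite $P^{(\alpha+k,\beta+k)}_{s-k}$ as a linear combination of $P^{(\alpha+r,\beta+r)}_{s-k}, P^{(\alpha+r,\beta+r)}_{s-k-1}, \dots, P^{(\alpha+r,\beta+r)}_{s-k-2(r-k)}$, which lands in $\mathrm{span}\{P^{(\alpha+r,\beta+r)}_i : s-2r \le i \le s\}$. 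The hypothesis $\alpha+r>-1$, $\beta+r>-1$ together with \eqref{Condition2XJPbis} is precisely what lets us apply Lemma \ref{lem:JacobiShiftParameter} here with $N = r-k$ (the condition \eqref{eq:JacobiShiftParameterRestriction} for $P^{(\alpha+k,\beta+k)}_{s-k}$ with shift $r-k$ reads $\alpha+\beta+2k+s-k\notin\{-1,\dots,-(s-k)-2(r-k)\}$, i.e.\ $\alpha+\beta+s\notin\{-1,\dots,-s-2r+k\}$, which follows from \eqref{Condition2XJPbis}).

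Finally I would combine: for each $k$, $Q_k$ is a polynomial of degree $\le |\lambda|+|\mu|-r_1+k$, so $Q_k = \sum_{j=0}^{|\lambda|+|\mu|-r_1+k} a_{k,j} x^j$, and each monomial $x^j$ can be expanded back in the basis $\{P^{(\alpha+r,\beta+r)}_i\}_{i\le j}$; multiplying by the linear combination coming from $P^{(\alpha+k,\beta+k)}_{s-k}$ shows that $x^j\,P^{(\alpha+r,\beta+r)}_i$ with $i\ge s-2r$ lies in $\mathrm{span}\{P^{(\alpha+r,\beta+r)}_\ell : s-2r \le \ell \le s + |\lambda|+|\mu|-r_1+k\}$ — wait, one must be slightly careful: multiplying a degree-$j$ polynomial by $P^{(\alpha+r,\beta+r)}_i$ raises the degree, not lowers it, so the cleaner route is to keep $Q_k P^{(\alpha+k,\beta+k)}_{s-k}$ as a single polynomial of degree $\le n$ and simply observe that its lowest-degree Jacobi component is controlled: since $Q_k$ has degree $\le |\lambda|+|\mu|-r_1+k$, we may write $Q_k P^{(\alpha+r,\beta+r)}_{s-k-\ell}$ ($0\le \ell\le 2(r-k)$) as a polynomial of degree $\le n$ whose expansion in $\{P^{(\alpha+r,\beta+r)}_i\}$ has only terms with $i \ge (s-k-\ell) - (|\lambda|+|\mu|-r_1+k)$; the worst case is $i \ge s - 2r - (|\lambda|+|\mu|) + r_1 = n - 2(|\lambda|+|\mu|) - 2r_2 - \dots$, and tracking the arithmetic gives exactly $i \ge n - 2(|\lambda|+|\mu|+r_2)$, so $P^{(\alpha,\beta)}_{\lambda,\mu,n} \in \mathrm{span}\{P^{(\alpha+r,\beta+r)}_i : n-t \le i \le n\}$ with $t = 2(|\lambda|+|\mu|+r_2)$. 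The main obstacle — and the step requiring the most care — is the bookkeeping in this last paragraph: precisely tracking how the degree bound on $Q_k$ (which genuinely uses the sharper exponent in Proposition \ref{prop:degQ}, not just the naive one) interacts with the parameter shifts from Lemma \ref{lem:JacobiShiftParameter}, to land on exactly $t = 2(|\lambda|+|\mu|+r_2)$ rather than something larger; the asymmetry between $r_1$ and $r_2$ in the answer (only $r_2$ appears, not $r_1$) is forced by the asymmetry of the prefactor and must be chased through the minors $C_k$ carefully.
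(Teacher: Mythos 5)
Your proposal follows the paper's proof essentially step for step: Laplace expansion along the last column with the cofactors identified as the $Q_k$ of Proposition \ref{prop:degQ}, the derivative formula \eqref{eq:JacobiDerivative}, Lemma \ref{lem:JacobiShiftParameter} to pass to the parameters $(\alpha+r,\beta+r)$, and then the expansion of each product $Q_k\,P^{(\alpha+r,\beta+r)}_{m}$ into Jacobi polynomials with indices bounded below, with the same bookkeeping yielding $t=2(|\lambda|+|\mu|+r_2)$. One small correction: the hypotheses $\alpha+r>-1$, $\beta+r>-1$ are not what legitimizes Lemma \ref{lem:JacobiShiftParameter} (that only needs \eqref{Condition2XJPbis}); they are needed exactly for your final, unjustified claim that the expansion of $Q_k\,P^{(\alpha+r,\beta+r)}_{s-k-\ell}$ in $\{P^{(\alpha+r,\beta+r)}_i\}$ contains only terms with $i\ge s-k-\ell-\deg Q_k$, which holds because these Jacobi polynomials are then orthogonal (the paper invokes the three-term recurrence at this point).
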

\begin{proof}
Let $s=n-|\lambda|-|\mu|+r_1$ and expand the exceptional Jacobi polynomial \eqref{def:XJP1} by its last column, it gives
\begin{equation}\label{eq:XJPlincomJac:1}
	P^{(\alpha,\beta)}_{\lambda,\mu,n}(x) 
		= \sum_{k=0}^{r}Q_{k}(x)\frac{d^k}{dx^k}P^{(\alpha,\beta)}_{s}(x)
\end{equation}	
for some polynomials $Q_k$ as described in Proposition \ref{prop:degQ}. Furthermore, the same proposition gives
\begin{equation}\label{eq:XJPlincomJac:1.5}
	\deg Q_k \leq |\lambda|+|\mu|-r_1+k.
\end{equation} 
Note that the assumptions of the proposition coincide with conditions \eqref{Condition1XJP}. 

The $k^{\text{th}}$ derivative of the Jacobi polynomial is given in \eqref{eq:JacobiDerivative}, hence \eqref{eq:XJPlincomJac:1} can be written as
\begin{equation}\label{eq:XJPlincomJac:2}
	P^{(\alpha,\beta)}_{\lambda,\mu,n}(x)
		= \sum_{k=0}^{r} \frac{(n+\alpha+\beta+1)_{k}}{2^k} Q_{k}(x) P^{(\alpha+k,\beta+k)}_{s-k}(x).
\end{equation}
It is sufficient if $k$ runs from zero to $\min\{r,s\}$, then the subindex $s-k\geq0$ for all $k$.

Next, we express all Jacobi polynomials in terms Jacobi polynomials with parameters $\alpha+r$ and $\beta+r$ via Lemma \ref{lem:JacobiShiftParameter}. The assumptions for this lemma are satisfied as we impose conditions \eqref{Condition2XJPbis}, we get
\begin{equation}\label{eq:JacobiShiftParameter}
	P^{(\alpha,\beta)}_{s-k}
		= \sum_{i=0}^{2(r-k)} c^{\alpha,\beta,r-k}_{s-k-i,s-k} P^{(\alpha+r,\beta+r)}_{s-k-i}(x)
\end{equation} 
where $c^{\alpha,\beta,r-k}_{s-k-i,s-k}\in\mathbb{R}$ for all $i$. If we combine \eqref{eq:XJPlincomJac:2} and \eqref{eq:JacobiShiftParameter}, we obtain
\begin{equation*}
	P^{(\alpha,\beta)}_{\lambda,\mu,n}(x) 
		= \sum_{k=0}^{r} \frac{(n+\alpha+\beta+1)_{k}}{2^k} Q_{k}(x) \sum_{i=0}^{2(r-k)} c^{\alpha,\beta,r-k}_{s-k-i,s-k} P^{(\alpha+r,\beta+r)}_{s-k-i}(x).
\end{equation*}
Rearranging the terms in the right hand side gives
\begin{equation}\label{eq:XJPlincomJac:3}
	P^{(\alpha,\beta)}_{\lambda,\mu,n}(x)
		= 
		\sum_{j=0}^{r}\tilde{Q}_{j}(x)P^{(\alpha+r,\beta+r)}_{s-j}(x) 
		+ 
		\sum_{j=1}^{r}\hat{Q}_{r-j}(x)P^{(\alpha+r,\beta+r)}_{s-r-j}(x)
\end{equation}
where $\tilde{Q}_j$ and $\hat{Q}_j$ are polynomials of degree at most $|\lambda|+|\mu|-r_1+j$ for $j=0,\dots,r$; this follows directly from \eqref{eq:XJPlincomJac:1.5}. Since we assume $\alpha+r>-1$ and $\beta+r>-1$, we can apply the three-term recurrence relation for the Jacobi polynomials in \eqref{eq:XJPlincomJac:3} as they are orthogonal. We get
\begin{align}
	\tilde{Q}_{j}(x)P^{(\alpha+r,\beta+r)}_{s-j}(x)
		&= \sum_{m=-\deg \tilde{Q}_{j}}^{\deg \tilde{Q}_{j}} a^j_m P^{(\alpha+r,\beta+r)}_{s-j+m}(x), 
	&& 0\leq j \leq r, \label{eq:XJPlincomJac:4} \\
	\hat{Q}_{r-j}(x)P^{(\alpha+r,\beta+r)}_{s-r-j}(x)
		&= \sum_{m=-\deg \hat{Q}_{r-j}}^{\deg \hat{Q}_{r-j}} b^j_m P^{(\alpha+r,\beta+r)}_{s-r-j+m}(x), 
	&& 1\leq j \leq r \label{eq:XJPlincomJac:5},	
\end{align}
where $a^j_m,b^j_m\in\mathbb{R}$. Hence, inspecting all terms in \eqref{eq:XJPlincomJac:3}, we get via \eqref{eq:XJPlincomJac:4} and \eqref{eq:XJPlincomJac:5} that we end up with a linear combination of Jacobi polynomials $P^{(\alpha+r,\beta+r)}_{M}$ where the degree $M$ is in the range $n-2(|\lambda|+|\mu|+r_2)\leq M \leq n$. Hence the result is established.
\end{proof}

By the result of Lemma \ref{lem:XJPlincomJac}, we prove Theorem \ref{thm:XJPN(n)} using the orthogonality of the Jacobi polynomials. The argument is completely the same as the proof of Theorem 2 in \cite{Bonneux_Kuijlaars}.

\begin{proof}[Proof of Theorem \ref{thm:XJPN(n)}]
Since $\alpha+r>-1$ and $\beta+r>-1$, the Jacobi polynomials $P^{(\alpha+r,\beta+r)}_{n}$ are orthogonal polynomials. According to Lemma \ref{lem:XJPlincomJac}, the exceptional Jacobi polynomial is a linear combination of these Jacobi polynomials. Hence for every polynomial $H$ of degree strict less than $n-t$, we get
\begin{equation*}
	\int_{-1}^{1}H(x) P_{\lambda,\mu,n}^{(\alpha,\beta)}(x) (1+x)^{\alpha+r}(1-x)^{\beta+r}dx=0.
\end{equation*}
This implies that $P_{\lambda,\mu}^{(\alpha,\beta,n)}$ has at least $n-t$ zeros in $(-1,1)$; by definition these zeros are regular zeros. Moreover, these zeros must have odd multiplicity. Hence the number of simple zeros must tend to infinity when the degree tends to infinity.
\end{proof} 

Lemma \ref{lem:XJPlincomJac} has another direct consequence which we need further. 

\begin{corollary}\label{cor:BeardonDriver}
For any partitions $\lambda$ and $\mu$, take $\alpha,\beta\in\mathbb{R}$ and $n\in\mathbb{N}_{\lambda,\mu}$ such that the conditions \eqref{Condition2XJPbis} and \eqref{Condition1XJP} are satisfied. Assume that $\alpha+r>-1$ and $\beta+r>-1$ and let $n>2(|\lambda|+|\mu|+r_2)$. Further, denote by 
$1>a^{(\alpha+r,\beta+r)}_{1,n}>a^{(\alpha+r,\beta+r)}_{2,n}>\cdots>a^{(\alpha+r,\beta+r)}_{n,n}>-1$ the simple zeros of $P^{(\alpha+r,\beta+r)}_{n}$. Then, at least $n-2(|\lambda|+|\mu|+r_2)$ intervals $\left(a^{(\alpha+r,\beta+r)}_{j,n},a^{(\alpha+r,\beta+r)}_{j+1,n}\right)$, where $1\leq j<n$, contain a (regular) zero of the exceptional Jacobi polynomial $P^{(\alpha,\beta)}_{\lambda,\mu,n}$.
\end{corollary}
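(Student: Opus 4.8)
Throughout write $P := P^{(\alpha,\beta)}_{\lambda,\mu,n}$, $q_m := P^{(\alpha+r,\beta+r)}_{m}$, and let $\Pi_m$ denote the polynomials of degree at most $m$. The plan is to combine Lemma \ref{lem:XJPlincomJac} with a Gauss quadrature argument of Beardon--Driver type. Under the stated hypotheses Lemma \ref{lem:XJPlincomJac} gives $P = \sum_{k=n-t}^{n} c_k q_k$ with $t = 2(|\lambda|+|\mu|+r_2)$, and $c_n\neq 0$ since $\deg P = n$ by Lemma \ref{lem:XJPdeg}. As $\alpha+r>-1$ and $\beta+r>-1$, the $q_m$ are orthogonal on $(-1,1)$ with respect to the positive weight $w(x)=(1-x)^{\alpha+r}(1+x)^{\beta+r}$ and satisfy $\deg q_m = m$; expanding $P$ in this basis immediately yields that $P$ is orthogonal to $\Pi_{n-t-1}$ with respect to $w$.

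Next I would invoke the $n$-point Gauss quadrature rule for $w$: writing $1>a_{1,n}>\cdots>a_{n,n}>-1$ for the simple zeros of $q_n$, there are positive Christoffel numbers $\gamma_{1,n},\dots,\gamma_{n,n}$ with $\int_{-1}^{1} g\,w = \sum_{i=1}^{n}\gamma_{i,n}\,g(a_{i,n})$ for every $g\in\Pi_{2n-1}$. For $H\in\Pi_{n-t-1}$ the product $PH$ lies in $\Pi_{2n-t-1}\subseteq\Pi_{2n-1}$, so exactness of the rule together with $\int_{-1}^{1} PH\,w = 0$ gives $\sum_{i=1}^{n}\gamma_{i,n}\,P(a_{i,n})\,H(a_{i,n})=0$ for all $H\in\Pi_{n-t-1}$. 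Hence the vector $v:=\bigl(\gamma_{1,n}P(a_{1,n}),\dots,\gamma_{n,n}P(a_{n,n})\bigr)\in\mathbb{R}^n$ is orthogonal to the $(n-t)$-dimensional subspace $\{(H(a_{1,n}),\dots,H(a_{n,n})):H\in\Pi_{n-t-1}\}$, and $v\neq 0$ unless $P$ is a scalar multiple of $q_n$ (which does not occur when the partitions are not both empty; for $\lambda=\mu=\emptyset$ the corollary is the classical interlacing statement).

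The decisive ingredient is an elementary sign-change lemma: if $0\neq v\in\mathbb{R}^n$ is orthogonal to $(H(a_{1,n}),\dots,H(a_{n,n}))$ for every $H\in\Pi_{m}$ with $m\le n-2$, then the sequence $v_1,\dots,v_n$ has at least $m+1$ sign changes. I would prove this by contradiction: if there were at most $m$ sign changes, one could place a point $\xi_j$ in a suitable gap $(a_{i+1,n},a_{i,n})$ for each of them and set $H(x)=\pm\prod_j(x-\xi_j)$ (degree $\le m$) with the overall sign chosen so that $v_iH(a_{i,n})\ge0$ for every $i$ while no factor vanishes at a node; since $v\neq0$, orthogonality then gives the contradiction $0=\sum_i v_iH(a_{i,n})>0$. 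Applying this with $m=n-t-1$, and using that the $\gamma_{i,n}$ are positive so that $v_i$ and $P(a_{i,n})$ share a sign, the sequence $\bigl(P(a_{i,n})\bigr)_{i=1}^{n}$ has at least $n-t$ sign changes; a sign change between consecutive nodes $a_{i+1,n}$ and $a_{i,n}$ forces a zero of $P$ in the open interval $(a_{i+1,n},a_{i,n})\subset(-1,1)$, i.e.\ a regular zero, and distinct sign changes are separated by distinct nodes, yielding at least $n-t$ of the $n-1$ gaps as claimed.

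The step I expect to require the most care is the bookkeeping when $P$ and $q_n$ share a zero, so that some $v_i=0$: then a single sign change of $v$ may straddle a run of vanishing entries, and one must argue that the $n-t$ sign changes are still witnessed by $n-t$ distinct gaps — a shared zero $a_{k,n}$ is itself a regular zero and may be charged to one of its adjacent gaps. This is handled exactly as in the proof of the corresponding statement in the Laguerre setting \cite{Bonneux_Kuijlaars}, from which the result follows.
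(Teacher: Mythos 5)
Your route is genuinely different from the paper's: the paper obtains the membership of $P^{(\alpha,\beta)}_{\lambda,\mu,n}$ in the span of $P^{(\alpha+r,\beta+r)}_{n-t},\dots,P^{(\alpha+r,\beta+r)}_{n}$ from Lemma \ref{lem:XJPlincomJac} and then disposes of the corollary in one line as a special case of \cite[Theorem 3.2]{Beardon_Driver}, whereas you reprove that quoted result from scratch via Gauss quadrature at the zeros of $q_n:=P^{(\alpha+r,\beta+r)}_{n}$ and a sign-change lemma. In the generic situation where $P^{(\alpha,\beta)}_{\lambda,\mu,n}$ vanishes at no zero of $q_n$, your argument is complete and correct: the vector $v_i=\gamma_{i,n}P^{(\alpha,\beta)}_{\lambda,\mu,n}\bigl(a^{(\alpha+r,\beta+r)}_{i,n}\bigr)$ is orthogonal to the node values of every polynomial of degree at most $n-t-1$, hence has at least $n-t$ sign changes, each occurring between two consecutive nodes and therefore forcing a zero in a distinct open gap.

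The degenerate case, however, is a genuine gap and not a bookkeeping detail. First, your parenthetical claim that $P^{(\alpha,\beta)}_{\lambda,\mu,n}$ cannot be a scalar multiple of $q_n$ when the partitions are nonempty is asserted, not proved, and it is needed: if $v=0$ the argument (and the conclusion) collapses. Second, ``charging a shared zero to an adjacent gap'' does not prove the statement: the corollary asserts that the \emph{open} intervals $\bigl(a^{(\alpha+r,\beta+r)}_{j+1,n},a^{(\alpha+r,\beta+r)}_{j,n}\bigr)$ contain zeros, and a zero of $P^{(\alpha,\beta)}_{\lambda,\mu,n}$ located at a node lies in none of them. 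Your quadrature argument only yields $n-t$ sign changes of the node-value sequence with zero entries skipped; when the polynomial vanishes at a node, such a sign change can be realized exactly at that node and produce no zero in any open gap. This really happens for general members of the span, which is the only property of $P^{(\alpha,\beta)}_{\lambda,\mu,n}$ your proof uses: for Legendre polynomials, $h=p_3+c\,p_1$ with $c<0$ lies in the span of $p_1,p_2,p_3$, is orthogonal to constants, vanishes at the middle zero of $p_3$, and its two remaining zeros lie outside the extreme zeros of $p_3$, so no open gap contains a zero of $h$ even though the count would demand one. So to reach the stated conclusion in the degenerate case you need extra input (for instance, that $P^{(\alpha,\beta)}_{\lambda,\mu,n}$ and $P^{(\alpha+r,\beta+r)}_{n}$ share no zeros), or you must fall back on the precise formulation of \cite[Theorem 3.2]{Beardon_Driver} as the paper does; the appeal to \cite{Bonneux_Kuijlaars} does not supply the missing argument, since the corresponding Laguerre corollary is likewise settled there by citing Beardon--Driver. (The weaker conclusion your argument does establish --- at least $n-t$ sign changes of $P^{(\alpha,\beta)}_{\lambda,\mu,n}$ located at or between consecutive zeros of $q_n$ --- is all that the later proofs of Theorems \ref{thm:XJPArcsineLaw} and \ref{thm:XJPExceptionalZeros} actually use, but it is not the statement of Corollary \ref{cor:BeardonDriver}.)
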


This corollary is a special case of the result of Beardon and Driver, see \cite[Theorem 3.2]{Beardon_Driver}. It basically says that almost all regular zeros follow the zeros of the Jacobi polynomial when the degree of the exceptional Jacobi polynomial tends to infinity.

\subsection{Proof of Theorem \ref{thm:XJPMehlerHeine}: Mehler-Heine asymptotics}\label{sec:ProofMehlerHeine}
We recall the Mehler-Heine asymptotics for Jacobi polynomials \cite[Theorem 8.1.1]{Szego}. In this theorem, \eqref{eq:JacMehHeiBeta} directly follows from \eqref{eq:JacMehHeiAlpha} via \eqref{eq:Jacobi-x}.

\begin{theorem}\label{thm:JacMehlerHeine}
Take $\alpha,\beta\in\mathbb{R}$, then
\begin{align}
	&\lim_{n\to\infty} \frac{1}{n^{\alpha}} P_n^{(\alpha,\beta)}\left(\cos\left(\frac{x}{n}\right)\right) 	
		= 2^{\alpha} x^{-\alpha} J_{\alpha}\left(x\right), 
	\label{eq:JacMehHeiAlpha}\\
	&\lim_{n\to\infty} \frac{(-1)^n}{n^{\beta}} P_n^{(\alpha,\beta)}\left(-\cos\left(\frac{x}{n}\right)\right) 
		= 2^{\beta} x^{-\beta} J_{\beta}\left(x\right),
	\label{eq:JacMehHeiBeta}
\end{align}
uniformly for $x$ in compact subsets of the complex plane.
\end{theorem}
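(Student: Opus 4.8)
The plan is to derive \eqref{eq:JacMehHeiBeta} from \eqref{eq:JacMehHeiAlpha} by the reflection symmetry, and to prove \eqref{eq:JacMehHeiAlpha} by passing to the limit term-by-term in the hypergeometric series of the Jacobi polynomial; the statement itself is classical \cite[Theorem 8.1.1]{Szego}, so the point is only to record a self-contained argument. For the reduction I would apply the reflection identity \eqref{eq:Jacobi-x} with its argument set to $\cos(x/n)$, giving $P^{(\alpha,\beta)}_n(-\cos(x/n)) = (-1)^n P^{(\beta,\alpha)}_n(\cos(x/n))$, so that
\[
	\frac{(-1)^n}{n^{\beta}} P_n^{(\alpha,\beta)}\!\left(-\cos\!\left(\tfrac{x}{n}\right)\right)
	= \frac{1}{n^{\beta}} P_n^{(\beta,\alpha)}\!\left(\cos\!\left(\tfrac{x}{n}\right)\right)
	\longrightarrow 2^{\beta} x^{-\beta} J_{\beta}(x),
\]
where the limit is exactly \eqref{eq:JacMehHeiAlpha} with the roles of $\alpha$ and $\beta$ exchanged. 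Hence everything reduces to \eqref{eq:JacMehHeiAlpha}.

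For \eqref{eq:JacMehHeiAlpha} I would start from the Gauss hypergeometric form of \eqref{eq:JacobiExplicit}, namely $P_n^{(\alpha,\beta)}(t) = \binom{n+\alpha}{n}\sum_{k=0}^{n}\frac{(-n)_k (n+\alpha+\beta+1)_k}{(\alpha+1)_k\,k!}\left(\frac{1-t}{2}\right)^{k}$, and substitute $t = \cos(x/n)$, which yields $\frac{1-t}{2} = \sin^2\!\left(\frac{x}{2n}\right)$. After dividing by $n^{\alpha}$, I would treat the three ingredients separately: $n^{-\alpha}\binom{n+\alpha}{n} = \frac{\Gamma(n+\alpha+1)}{n^{\alpha}\Gamma(n+1)\Gamma(\alpha+1)} \to \frac{1}{\Gamma(\alpha+1)}$; for each fixed $k$, $n^{-2k}(-n)_k(n+\alpha+\beta+1)_k \to (-1)^k$; and $n^{2k}\sin^{2k}\!\left(\frac{x}{2n}\right) \to (x/2)^{2k}$. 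Consequently the $k$-th summand converges to $\frac{(-1)^k (x/2)^{2k}}{(\alpha+1)_k\,k!}$.

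The decisive step is to justify interchanging $\lim_{n\to\infty}$ with the summation uniformly for $x$ in a disk $|x|\le R$, since the number of summands grows with $n$. I would produce an $n$-independent summable majorant: for $k\le n$ and $n$ large one has $|(-n)_k|\le n^{k}$, $|(n+\alpha+\beta+1)_k|\le (3n)^{k}$, and $|\sin(x/(2n))|\le \frac{R}{2n}e^{R/2}$, so that each term is dominated by $\frac{D^{k}}{|(\alpha+1)_k|\,k!}$ with $D=D(R)$ a constant; this is summable because $1/(|(\alpha+1)_k|\,k!)$ decays like $(k!)^{-2}$. A dominated-convergence argument for series then gives
\[
	\lim_{n\to\infty}\frac{1}{n^{\alpha}}P_n^{(\alpha,\beta)}\!\left(\cos\!\left(\tfrac{x}{n}\right)\right)
	= \frac{1}{\Gamma(\alpha+1)}\sum_{k=0}^{\infty}\frac{(-1)^k (x/2)^{2k}}{(\alpha+1)_k\,k!},
\]
and, the majorant being uniform in $x$, the convergence is uniform on compact subsets of $\mathbb{C}$ (both sides are entire in $x$, so nothing goes wrong at $x=0$).

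It remains to identify the series with the Bessel function. Using $\frac{1}{\Gamma(\alpha+1)(\alpha+1)_k}=\frac{1}{\Gamma(\alpha+k+1)}$ and the defining series $J_{\alpha}(x)=\sum_{k\ge 0}\frac{(-1)^k}{k!\,\Gamma(\alpha+k+1)}(x/2)^{\alpha+2k}$, the right-hand side equals $2^{\alpha}x^{-\alpha}J_{\alpha}(x)$, which is \eqref{eq:JacMehHeiAlpha}. The hardest part is the uniform majorant just described, which is what upgrades a pointwise term-by-term limit to the uniform statement. A secondary technical point is the degenerate case $\alpha\in\{-1,-2,\dots\}$, where $(\alpha+1)_k$ vanishes and the hypergeometric form is singular; I would dispose of this by analytic continuation in the parameter, noting that both $\frac{1}{n^{\alpha}}P_n^{(\alpha,\beta)}(\cos(x/n))$ and $2^{\alpha}x^{-\alpha}J_{\alpha}(x)$ extend holomorphically in $\alpha$, so that the identity established for $\alpha>-1$ propagates to all real $\alpha$.
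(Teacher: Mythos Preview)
Your reduction of \eqref{eq:JacMehHeiBeta} to \eqref{eq:JacMehHeiAlpha} via the reflection identity \eqref{eq:Jacobi-x} is exactly what the paper does; the paper does not prove \eqref{eq:JacMehHeiAlpha} at all but simply cites \cite[Theorem 8.1.1]{Szego}, so your hypergeometric-series argument goes well beyond what the paper offers and is essentially correct.

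One technical point deserves a bit more care. Your analytic-continuation patch for $\alpha\in\{-1,-2,\dots\}$ establishes that the \emph{limit function} agrees with $2^{\alpha}x^{-\alpha}J_{\alpha}(x)$ for those $\alpha$, but it does not by itself yield the \emph{uniform convergence in $x$} at those exceptional parameter values: knowing that two holomorphic functions of $\alpha$ coincide says nothing about the rate of convergence of the approximating sequence at a particular $\alpha$. The clean way around this is to run your dominated-convergence argument with $\alpha$ ranging over a compact subset of $\mathbb{C}$ (the majorant $D^{k}/(|(\alpha+1)_k|\,k!)$ can be made locally uniform in $\alpha$ away from the negative integers), obtain convergence that is locally uniform in \emph{both} $x$ and $\alpha$, and then invoke the Cauchy integral formula in $\alpha$ to pass the uniform-in-$x$ limit through to the isolated points $\alpha=-1,-2,\dots$. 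Alternatively, one can bypass the issue entirely by using a representation of $P_n^{(\alpha,\beta)}$ that is regular for all $\alpha$, such as the explicit sum \eqref{eq:JacobiExplicit}.
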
	

Now we prove a similar behavior for exceptional Jacobi polynomials.

\begin{proof}[Proof of Theorem \ref{thm:XJPMehlerHeine}]
We start with the proof of \eqref{eq:XJPMehlerHeineAlpha}. We obtain this result by extending \eqref{eq:JacMehHeiAlpha}. 

Set $s = n-|\lambda|-|\mu|+r_1$ and consider the decomposition of $P^{(\alpha,\beta)}_{\lambda,\mu,n}(x)$ 
as in \eqref{eq:XJPlincomJac:1}. We have $Q_r = (1+x)^{r_2} \Omega_{\lambda,\mu}^{(\alpha,\beta)}$
and therefore \eqref{eq:XJPlincomJac:1} can be written as
\begin{equation}\label{eq:MehlerHeine:1}
	P^{(\alpha,\beta)}_{\lambda,\mu,n}(x) 
		= \sum_{j=0}^{r-1}Q_{j}(x)\frac{d^j}{dx^j}P^{(\alpha,\beta)}_{s}(x) + (1+x)^{r_2}
		\Omega^{(\alpha,\beta)}_{\lambda,\mu}(x) \frac{d^r}{dx^r}P^{(\alpha,\beta)}_{s}(x).
\end{equation}
The degree of the polynomial $Q_j$ is at most $|\lambda|+|\mu|-r_1+j$, which is independent of $n$, see Proposition \ref{prop:degQ}. Hence, only the Jacobi polynomials in \eqref{eq:MehlerHeine:1} depend on $n$ (recall that $s$ depends on $n$).

The limit \eqref{eq:JacMehHeiAlpha} also holds if we replace
$P_n^{(\alpha,\beta)}$ by $P_{n-c}^{(\alpha,\beta)}$ for some constant $c$. In particular, for $c=|\lambda|+|\mu|-r_1$, we get that $s=n-c$, and hence
\begin{equation}\label{eq:MehlerHeine:2}
	\lim_{n\to\infty} \frac{1}{n^{\alpha}}P_s^{(\alpha,\beta)}\left(\cos\left(\frac{x}{n}\right)\right) 
		= 2^{\alpha} x^{-\alpha} J_{\alpha}\left(x\right)
\end{equation}
uniformly for $x$ in compact subsets of the complex plane.	
Because of the uniform convergence, \eqref{eq:MehlerHeine:2} can be differentiated 
with respect to $x$ any number of times. If we differentiate 1 time, we obtain 
\begin{equation}\label{eq:firstderivative}
	\lim_{n\to\infty} \frac{-1}{n^{\alpha+1}} \sin\left(\frac{x}{n}\right) \frac{d}{dx}\left(P_s^{(\alpha,\beta)}\right)\left(\cos\left(\frac{x}{n}\right)\right) 
	= -2^{\alpha} x^{-\alpha} J_{\alpha+1}\left(x\right)
\end{equation}
where we used
\begin{equation*}
	\frac{d}{dx} \left(x^{-\alpha} J_{\alpha}\left(x\right)\right) = - x^{-\alpha} J_{\alpha+1}\left(x\right)
\end{equation*}
see for example \cite[Section 2.12, formula (6)]{Watson}. If we divide both sides in \eqref{eq:firstderivative} by $x$ and expand the sine function as $\sin\left(\frac{x}{n}\right) = \frac{x}{n} + O(x^3)$, we get
\begin{equation*}
	\lim_{n\to\infty} \frac{1}{n^{\alpha+2}} \frac{d}{dx}\left(P_s^{(\alpha,\beta)}\right)\left(\cos\left(\frac{x}{n}\right)\right) 
		= 2^{\alpha} x^{-\alpha-1} J_{\alpha+1}\left(x\right).
\end{equation*}
By repeating this argument, we obtain
\begin{equation}\label{eq:jthderivative}
	\lim_{n\to\infty} \frac{1}{n^{\alpha+2j}} \frac{d^{j}}{dx^{j}} \left(P_s^{(\alpha,\beta)} \right)\left(\cos\left(\frac{x}{n}\right)\right) 
		= 2^{\alpha} x^{-\alpha-j} J_{\alpha+j}\left(x\right),
		\qquad
		0\leq j \leq r.
\end{equation}
Hence, the limiting behavior of $\frac{1}{n^{\alpha+2r}} P_{\lambda, \mu,n}^{(\alpha,\beta)} \left( \cos \left(\frac{x}{n}\right) \right)$ is determined by the last term in \eqref{eq:MehlerHeine:1} only; for $j<r$, there is no contribution because of \eqref{eq:jthderivative}. This gives us
\begin{equation*}
	\lim_{n \to \infty} 
		\frac{1}{n^{\alpha +2r}} P_{\lambda, \mu,n}^{(\alpha,\beta)} \left( \cos\left(\frac{x}{n}\right) \right)
		= \Omega_{\lambda, \mu}^{(\alpha,\beta)}(1) \, 2^{\alpha+r_2} x^{-\alpha-r} J_{\alpha+r}(x)
\end{equation*}
as $\lim\limits_{n \to \infty}  \left(1+\cos\left(\frac{x}{n}\right)\right)^{r_2} \Omega_{\lambda, \mu}^{(\alpha,\beta)}\left(\cos \left(\frac{x}{n}\right)\right)= 2^{r_2} \cdot \Omega_{\lambda, \mu}^{(\alpha,\beta)}(1)$. This ends the proof of the first limit.

Next, we prove \eqref{eq:XJPMehlerHeineBeta} which treats the limiting behavior near the endpoint -1. We cannot directly use the approach we used to prove the previous limit. If we expand $P^{(\alpha,\beta)}_{\lambda,\mu,n}$ as in \eqref{eq:MehlerHeine:1}, we run into trouble because of the factor $(1+x)^{r_2}$. Moreover, the entries in the Wronskians consists of the factors $(1+x)^{-\beta}$. To avoid this problem, we first rewrite the exceptional Jacobi polynomial in terms of $\bar{P}^{(\alpha',\beta')}_{\lambda,\mu',n}(x)$, defined in \eqref{eq:Pbar}. Therefore, we use \eqref{lem:type2and3} where $\alpha'=\alpha+\mu_1+r_2$ and $\beta'=\beta-\mu_1-r_2$. Set $r'=r_1+r(\mu')$ where $r(\mu')$ is the length of the conjugated partition $\mu'$. 

Now, we can approach as before and use \eqref{eq:JacMehHeiBeta}, we have
\begin{equation*}
	\bar{P}^{(\alpha',\beta')}_{\lambda,\mu',n}(x) 
		= \sum_{j=0}^{r'-1}Q_{j}(x)\frac{d^j}{dx^j}P^{(\alpha',\beta')}_{s}(x) + (1-x)^{r(\mu')}
		\bar{\Omega}^{(\alpha',\beta')}_{\lambda,\mu'}(x) \frac{d^{r'}}{dx^{r'}}P^{(\alpha',\beta')}_{s}(x).
\end{equation*}
Via \eqref{eq:JacMehHeiBeta}, we obtain that the limiting behavior is determined by the last term, because \eqref{eq:JacMehHeiBeta} gives
\begin{equation*}
	\lim_{n\to\infty} \frac{(-1)^{s+j}}{n^{\beta+2j}} \frac{d^{j}}{dx^{j}} \left(P_s^{(\alpha',\beta')} \right)\left(-\cos\left(\frac{x}{n}\right)\right) 
		= 2^{\beta'} x^{-\beta'-j} J_{\beta'+j}\left(x\right),
		\qquad
		0\leq j \leq r'.
\end{equation*}
So, we get
\begin{equation*}
	\lim_{n \to \infty} 
		\frac{(-1)^{s+r'}}{n^{\beta' +2r'}} \bar{P}^{(\alpha',\beta')}_{\lambda,\mu',n} 
		= \bar{\Omega}_{\lambda, \mu'}^{(\alpha',\beta')}(-1) \, 2^{\beta'+r(\mu')} x^{-\beta'-r'} J_{\beta'+r'}(x).
\end{equation*}

Finally, we have to compare $\bar{P}^{(\alpha',\beta')}_{\lambda,\mu',n}$ with $P^{(\alpha,\beta)}_{\lambda,\mu,n}$. Lemma \ref{lem:type2and3} gives
\begin{equation*}
	\lim_{n \to \infty}  P^{(\alpha,\beta)}_{\lambda,\mu,n}\left(-\cos\left(\frac{x}{n}\right) \right)
		= \lim_{n \to \infty} c \cdot \bar{P}^{(\alpha+\mu_1+r_2,\beta-\mu_1-r_2)}_{\lambda,\mu',n}\left(-\cos\left(\frac{x}{n}\right) \right). 
\end{equation*}
Consider the constant term $c$, defined in \eqref{eq:constantc}. We split it into two parts: we have the factors which are independent of $n$ and the factors which depends on $n$ (recall that $s$ is in terms of $n$). We have
\begin{equation*}
	c
		= c_{ind} \cdot \frac{\prod\limits_{j=1}^{r_2}(s+\beta-m_j)}{\prod\limits_{j=1}^{r(\mu')}(s+\alpha'-m_j')} = O(n^{r_2-r(\mu')})
\end{equation*}
as $n$ tends to infinity because $c_{ind}\neq0$. We conclude that
\begin{equation*}
	\lim_{n \to \infty} 
		\frac{(-1)^{s+r'}}{n^{\beta' +2r'+r_2-r'(\mu)}} P^{(\alpha,\beta)}_{\lambda,\mu,n} 
		= c_{ind} \cdot \bar{\Omega}_{\lambda, \mu'}^{(\alpha',\beta')}(-1) \, 2^{\beta'+r(\mu')} x^{-\beta'-r'} J_{\beta'+r'}(x).
\end{equation*}
A calculation gives that $\beta'+r'=\beta+r_1-r_2$ and hence the result is obtained as $r(\mu')=\mu_1$.
\end{proof}

\subsection{Proof of Theorem \ref{thm:XJPArcsineLaw}: the weak macroscopic limit of the regular zeros}\label{sec:ProofArcsineLaw}
The limiting behavior of the zero-counting measure of the zeros of the Jacobi polynomial is given by the arcsine distribution, see \cite[Section 4.2]{Kuijlaars_Van_Assche}. This distribution is independent of the parameters $\alpha$ and $\beta$. 

\begin{theorem}\label{thm:JacArcsineLaw}
For any $\alpha,\beta>-1$, let $ 1 > a^{(\alpha,\beta)}_{1,n} > a^{(\alpha,\beta)}_{2,n} > \dots > b^{(\alpha,\beta)}_{n,n} > -1 $ denote the simple zeros of the Jacobi polynomial $P^{(\alpha,\beta)}_{n}$. Then, for any continuous function $f$ on $(-1,1)$, it holds that
\begin{equation}\label{eq:JacArsineLaw}
	\lim_{n\to\infty}\frac{1}{n}\sum_{j=1}^{n}f\left(a^{(\alpha,\beta)}_{j,n}\right)
	=\frac{1}{\pi}\int_{-1}^{1} \frac{f(x)}{\sqrt{1-x^2}}dx.
\end{equation}
\end{theorem}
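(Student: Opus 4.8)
The plan is to read \eqref{eq:JacArsineLaw} as the statement that the normalized zero-counting measures
\[
	\nu_n := \frac{1}{n}\sum_{j=1}^{n} \delta_{a^{(\alpha,\beta)}_{j,n}}
\]
converge weakly to the arcsine measure $d\mu(x) = \frac{1}{\pi\sqrt{1-x^2}}\,dx$ on $[-1,1]$. Since all zeros $a^{(\alpha,\beta)}_{j,n}$ lie in the fixed compact interval $(-1,1)$, and $\mu$ is a compactly supported probability measure (hence determined by its moments), it is enough to prove that
\[
	\int x^k\, d\nu_n(x) \longrightarrow \int_{-1}^{1} x^k\, d\mu(x), \qquad k=0,1,2,\dots,
\]
and then to invoke the Weierstrass approximation theorem to pass from monomials to an arbitrary continuous bounded function $f$ on the closed interval.

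First I would rewrite the left-hand moments as normalized traces of a tridiagonal matrix. The Jacobi polynomials satisfy a three-term recurrence relation; after rescaling to the orthonormal family this produces a symmetric tridiagonal Jacobi matrix whose $n\times n$ principal truncation $J_n$ has the zeros $a^{(\alpha,\beta)}_{j,n}$ as its eigenvalues. Therefore
\[
	\int x^k\, d\nu_n(x) = \frac{1}{n}\sum_{j=1}^{n}\left(a^{(\alpha,\beta)}_{j,n}\right)^{k} = \frac{1}{n}\operatorname{tr}\!\left(J_n^{\,k}\right).
\]
Next I would feed in the explicit recurrence coefficients for the Jacobi polynomials: the diagonal entries tend to $0$ and the off-diagonal entries tend to $\frac{1}{2}$ as the index grows, which are elementary consequences of the known closed forms for these coefficients (and reflect that $[-1,1]$ has logarithmic capacity $\frac12$). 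Because the entry $\left(J_n^{\,k}\right)_{jj}$ depends only on the entries of $J_n$ within distance $k$ of the index $j$, and those entries converge to the entries of the constant ``free'' tridiagonal operator $J_\infty$ with $0$ on the diagonal and $\frac{1}{2}$ on each off-diagonal, an averaging argument yields that $\frac{1}{n}\operatorname{tr}(J_n^{\,k})$ converges to the $k$-th moment of the spectral density of $J_\infty$. A direct computation --- diagonalizing $J_\infty$ by Fourier series on $\mathbb{Z}$, so that its spectrum is $\{\cos\theta : \theta\in[0,\pi]\}$ with uniform density in $\theta$ --- identifies that spectral density with the arcsine measure, giving exactly $\int_{-1}^1 x^k\, d\mu(x)$ as the limit.

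I expect the main obstacle to be the control of the boundary contributions in the trace asymptotics. The convergence of the recurrence coefficients is only asymptotic, so the top-left corner of $J_n$ deviates from the free pattern, and one must check these deviations do not survive after dividing by $n$. I would handle this by observing that, for each fixed $k$, only the $O(k)$ entries near a given diagonal index influence $\left(J_n^{\,k}\right)_{jj}$, and that the number of indices $j$ for which these nearby entries have not yet settled into their free values is $o(n)$; hence their total contribution to $\frac{1}{n}\operatorname{tr}(J_n^{\,k})$ vanishes as $n\to\infty$, while the remaining bulk indices each contribute the same free value. A conceptually cleaner but less self-contained alternative is the potential-theoretic route: the Jacobi weight $(1-x)^{\alpha}(1+x)^{\beta}$ with $\alpha,\beta>-1$ is positive almost everywhere on $[-1,1]$ and lies in the Szeg\H{o} class, so the general theory of orthogonal polynomials forces $\nu_n$ to converge to the equilibrium measure of $[-1,1]$, which is precisely $\mu$. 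I would present the moment method as the explicit argument and mention this equilibrium-measure interpretation as the underlying reason.
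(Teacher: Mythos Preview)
Your proposal is correct, but note that the paper does not actually prove this theorem: it is stated as a classical fact about Jacobi polynomials and simply referenced to \cite[Section 4.2]{Kuijlaars_Van_Assche}. The paper's own contribution in that section is the extension to the exceptional polynomials (Theorem~\ref{thm:XJPArcsineLaw}), for which Theorem~\ref{thm:JacArcsineLaw} serves only as input.

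That said, what you outline is a standard and valid self-contained argument. Reading the zeros as eigenvalues of the truncated Jacobi matrix, using the convergence of the Jacobi recurrence coefficients to $(0,\tfrac12)$, and computing the density of states of the limiting free tridiagonal operator via Fourier diagonalization on $\mathbb{Z}$ correctly recovers the arcsine law; the boundary contributions to $\frac{1}{n}\operatorname{tr}(J_n^{\,k})$ are indeed $O(1/n)$ for each fixed $k$, exactly as you argue. The potential-theoretic alternative you mention---that the equilibrium measure of $[-1,1]$ is arcsine and governs the zero distribution for any weight positive a.e.\ on $[-1,1]$---is closer in spirit to the cited reference, which treats the more general setting of varying recurrence coefficients. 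One small point: the theorem as stated allows an arbitrary continuous $f$ on the open interval $(-1,1)$, not necessarily bounded, so to complete the passage from moments to general $f$ you should add a truncation argument near the endpoints (the arcsine measure assigns zero mass to $\{\pm 1\}$, and the zeros stay in $(-1,1)$, so this is routine).
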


This result extends directly to exceptional Jacobi polynomials as stated in Theorem \ref{thm:XJPArcsineLaw}. The argument for the proof of this result is completely analogous as in the Hermite \cite[Theorem 2.2]{Kuijlaars_Milson} and Laguerre \cite[Theorem 4]{Bonneux_Kuijlaars} case. For this reason, we only give the main idea.

\begin{proof}[Proof of Theorem \ref{thm:XJPArcsineLaw}]
From Theorem \ref{thm:XJPN(n)} and Corollary \ref{cor:BeardonDriver}, we know that if the degree tends to infinity, then the regular zeros follow the zeros of the Jacobi polynomial, except for a finite number of them. Hence \eqref{eq:JacArsineLaw} generalizes directly to \eqref{eq:XJPArcsineLaw}.
\end{proof}

\subsection{Proof of Theorem \ref{thm:XJPExceptionalZeros}: asymptotic behavior of  the exceptional zeros}\label{sec:ProofExceptionalZeros}
The weight function for exceptional Jacobi polynomials, stated in Lemma \ref{lem:XJPorth}, is given by
\begin{equation*}
	W^{(\alpha,\beta)}_{\lambda, \mu}(x) 
		= \frac{(1-x)^{\alpha+r_1+r_2}(1+x)^{\beta+r_1-r_2}}
		{\left(\Omega_{\lambda,\mu}^{(\alpha,\beta)}(x) \right)^2},
	\qquad
	x \in (-1,1).	 
\end{equation*}
In this section we view the above weight function as a meromorphic function in $\mathbb C \setminus \{-1,1\}$ with poles at the zeros of $\Omega_{\lambda,\mu}^{(\alpha,\beta)}$. To this end, we do not want that the generalized Jacobi polynomial vanish on $(-1,1)$ and therefore we assume conditions \eqref{Condition2GJP} and \eqref{Condition1GJP} to be satisfied. We start with the following property, which can be compared to Lemma 11 in \cite{Bonneux_Kuijlaars}. The proof follows the same ideas.

\begin{lemma}\label{lem:Residue}
For any partitions $\lambda$ and $\mu$, take $\alpha,\beta\in\mathbb{R}$ such that the conditions \eqref{Condition2GJP} and \eqref{Condition1GJP} are satisfied. Then, for every $n \in \mathbb N_{\lambda, \mu}$, the function
\begin{equation*}
	\left(P^{(\alpha,\beta)}_{\lambda,\mu,n} \right)^2 W^{(\alpha,\beta)}_{\lambda,\mu}
\end{equation*}
has zero residue at each of its poles in $\mathbb C \setminus \{-1,1\}$.
\end{lemma}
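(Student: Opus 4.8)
The plan is to show that $P^{(\alpha,\beta)}_{\lambda,\mu,n}$ vanishes to sufficiently high order at each zero of $\Omega^{(\alpha,\beta)}_{\lambda,\mu}$ in $\mathbb{C}\setminus\{-1,1\}$ so that the double pole of $W^{(\alpha,\beta)}_{\lambda,\mu}$ coming from $\left(\Omega^{(\alpha,\beta)}_{\lambda,\mu}\right)^2$ in the denominator is cancelled, leaving a function with at worst a removable singularity (hence zero residue, trivially). Concretely, let $z_0 \in \mathbb{C}\setminus\{-1,1\}$ be a zero of $\Omega^{(\alpha,\beta)}_{\lambda,\mu}$ of multiplicity $p \ge 1$. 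Since $z_0 \notin \{-1,1\}$, the numerator $(1-x)^{\alpha+r_1+r_2}(1+x)^{\beta+r_1-r_2}$ of the weight is analytic and nonzero near $z_0$, so $W^{(\alpha,\beta)}_{\lambda,\mu}$ has a pole of order exactly $2p$ at $z_0$. It therefore suffices to prove that $P^{(\alpha,\beta)}_{\lambda,\mu,n}$ has a zero of order at least $p$ at $z_0$; then $\left(P^{(\alpha,\beta)}_{\lambda,\mu,n}\right)^2$ has a zero of order at least $2p$ there, the product $\left(P^{(\alpha,\beta)}_{\lambda,\mu,n}\right)^2 W^{(\alpha,\beta)}_{\lambda,\mu}$ is analytic at $z_0$, and the residue vanishes.

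The key step is thus the divisibility claim: $\left(\Omega^{(\alpha,\beta)}_{\lambda,\mu}\right)$ divides $P^{(\alpha,\beta)}_{\lambda,\mu,n}$ as a polynomial (equivalently, every zero of $\Omega$ of multiplicity $p$ is a zero of $P^{(\alpha,\beta)}_{\lambda,\mu,n}$ of multiplicity $\ge p$). The natural way to see this is via the decomposition of $P^{(\alpha,\beta)}_{\lambda,\mu,n}$ obtained by expanding the defining Wronskian along its last column, which is exactly formula \eqref{eq:XJPlincomJac:1}:
\begin{equation*}
	P^{(\alpha,\beta)}_{\lambda,\mu,n}(x) = \sum_{k=0}^{r} Q_k(x)\,\frac{d^k}{dx^k}P^{(\alpha,\beta)}_s(x),
\end{equation*}
together with the identity $Q_r = (1+x)^{r_2}\,\Omega^{(\alpha,\beta)}_{\lambda,\mu}$ noted in Section \ref{sec:ProofMehlerHeine}. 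This shows $Q_r$ is divisible by $\Omega$, but not yet the other $Q_k$. The cleanest route is instead to go back to the Wronskian definition \eqref{def:XJP1} itself: each $Q_k$ (up to the common prefactor and sign) is, by Proposition \ref{prop:degQ}, essentially the minor of the Wronskian matrix obtained by deleting one row and the last column, i.e.\ a Wronskian-type determinant built from the seed functions $f_1,\dots,f_r$ and their derivatives. A standard Wronskian-identity argument — of the type used to show that exceptional polynomials are divisible by the "denominator" polynomial in all the classical exceptional families — shows that $\Omega^{(\alpha,\beta)}_{\lambda,\mu}$, being (up to prefactor) the Wronskian $\Wr[f_1,\dots,f_r]$, divides each such minor. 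Alternatively, and perhaps more transparently, one invokes the orthogonality/self-adjointness structure: $P^{(\alpha,\beta)}_{\lambda,\mu,n}/\Omega^{(\alpha,\beta)}_{\lambda,\mu}$ is (up to the smooth factor from the numerator of $W$) an eigenfunction of the exceptional Sturm–Liouville operator associated with the weight $W^{(\alpha,\beta)}_{\lambda,\mu}$, and such eigenfunctions are polynomials — this is precisely the content of the exceptional construction via Darboux transformations recalled in Section \ref{sec:XJP}, so $\Omega \mid P^{(\alpha,\beta)}_{\lambda,\mu,n}$ follows from the fact that the quotient is entire and of polynomial growth.

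I expect the main obstacle to be making the divisibility $\Omega^{(\alpha,\beta)}_{\lambda,\mu} \mid P^{(\alpha,\beta)}_{\lambda,\mu,n}$ fully rigorous at zeros $z_0$ of multiplicity $p > 1$, rather than just at simple zeros: one must track orders of vanishing in the Wronskian minors carefully (a Wronskian of $r$ functions divides any of its "bordered" $(r+1)\times(r+1)$ extensions only up to a controlled drop, so one has to verify the order is preserved after accounting for the prefactor $(1+x)^{(\beta+r_1+1)r_2}$, which is harmless near $z_0 \ne -1$). Once that divisibility is established with the correct multiplicity, the residue statement is immediate: near $z_0$ write $P^{(\alpha,\beta)}_{\lambda,\mu,n} = (x-z_0)^{p}\,R(x)$ with $R$ analytic, and $\Omega^{(\alpha,\beta)}_{\lambda,\mu} = (x-z_0)^{p}\,S(x)$ with $S(z_0)\ne 0$, so that
\begin{equation*}
	\left(P^{(\alpha,\beta)}_{\lambda,\mu,n}(x)\right)^2 W^{(\alpha,\beta)}_{\lambda,\mu}(x)
	= \frac{R(x)^2\,(1-x)^{\alpha+r_1+r_2}(1+x)^{\beta+r_1-r_2}}{S(x)^2}
\end{equation*}
is analytic at $z_0$, hence has zero residue there. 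Running this over all poles $z_0 \in \mathbb{C}\setminus\{-1,1\}$ finishes the proof. I would also remark, as a sanity check, that under the completeness hypotheses of Lemma \ref{lem:XJPorth} this lemma is exactly what is needed to integrate $\left(P^{(\alpha,\beta)}_{\lambda,\mu,n}\right)^2 W^{(\alpha,\beta)}_{\lambda,\mu}$ over $[-1,1]$ by deforming contours, which is how it will be used in the proof of Theorem \ref{thm:XJPExceptionalZeros}.
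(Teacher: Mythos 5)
Your argument hinges on the claim that $\Omega^{(\alpha,\beta)}_{\lambda,\mu}$ divides $P^{(\alpha,\beta)}_{\lambda,\mu,n}$, so that the function $\bigl(P^{(\alpha,\beta)}_{\lambda,\mu,n}\bigr)^2 W^{(\alpha,\beta)}_{\lambda,\mu}$ is actually analytic at each zero of $\Omega^{(\alpha,\beta)}_{\lambda,\mu}$ and the residue statement becomes vacuous. That divisibility claim is false in general, and this is a genuine gap, not a technicality about multiplicities. If it were true, every exceptional zero of $P^{(\alpha,\beta)}_{\lambda,\mu,n}$ would coincide \emph{exactly} with a zero of $\Omega^{(\alpha,\beta)}_{\lambda,\mu}$, which would make Theorem \ref{thm:XJPExceptionalZeros} (convergence at rate $O(n^{-1})$, illustrated in Figure \ref{fig:1} by exceptional zeros close to but distinct from the zeros of $\Omega$) pointless; indeed, in the proof of that theorem the Laurent coefficient $C_0$ in \eqref{eq:Taylorexpansion} is generically nonzero, and $C_0\neq 0$ means precisely that $P^{(\alpha,\beta)}_{\lambda,\mu,n}(z_j)\neq 0$, i.e.\ the pole is really there. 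The point of Lemma \ref{lem:Residue} is exactly that the function has genuine \emph{double} poles whose Laurent expansions have no $(x-z_j)^{-1}$ term. Your two suggested routes to divisibility also do not work: a Wronskian of $r$ functions does not in general divide the bordered minors $Q_k$ (only $Q_r$ carries the factor $\Omega$, as you note), and the quotient $P^{(\alpha,\beta)}_{\lambda,\mu,n}/\Omega^{(\alpha,\beta)}_{\lambda,\mu}$ is not a polynomial eigenfunction of any operator here — after the Darboux--Crum transformation it appears (times a trigonometric prefactor) as a \emph{rational} eigenfunction of the transformed Schr\"odinger operator, which is regular on the relevant real interval but has poles at the complex zeros of $\Omega$. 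Already for $X_m$-Jacobi polynomials \eqref{eq:XmJac} the exceptional polynomial is not divisible by the denominator polynomial.

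The paper's proof goes a different way: it conjugates to the Schr\"odinger form \eqref{eq:DVJac2}, performs the Darboux--Crum transformation with the seed functions of Table \ref{tab:2}, identifies the transformed potential and eigenfunctions in terms of $\Omega^{(\alpha,\beta)}_{\lambda,\mu}$ and $P^{(\alpha,\beta)}_{\lambda,\mu,n}$, and then invokes trivial monodromy of the transformed operator at every point of $\mathbb{C}\setminus\{-1,1\}$ (Proposition 5.21 in \cite{GarciaFerrero_GomezUllate_Milson}) together with Proposition 3.3 of \cite{Duistermaat_Grunbaum} to conclude that the residues vanish even though the poles persist. To repair your proposal you would have to replace the divisibility step by an argument of this type (trivial monodromy / oscillation-theoretic input); the elementary order-counting you outline cannot yield the lemma because the conclusion it would give is strictly stronger than what is true.
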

\begin{proof}
Without loss of generality, we may assume the conditions \eqref{Condition2XJP} and \eqref{Condition1XJP} to be satisfied instead of \eqref{Condition2GJP} and \eqref{Condition1GJP}. Otherwise, the exceptional Jacobi polynomial would vanish and hence there is nothing to prove as there would be no poles. 	

We apply a Darboux-Crum transformation to the differential operator \eqref{eq:DVJac2} with seed functions $\varphi_{n_1}^{(\alpha,\beta)}, \ldots, \varphi_{n_{r_1}}^{(\alpha,\beta)}, \varphi_{m_1}^{(\alpha,-\beta)}, \ldots, \varphi_{m_{r_2}}^{(\alpha,-\beta)}$, see Table \ref{tab:2}. This transformation leads to a new differential operator
\begin{equation} \label{eq:Oplamu} 
	y  \mapsto - y'' + V_{\lambda,\mu} y 
\end{equation}
with potential
\begin{align} 
	V_{\lambda,\mu}(x)
		= V(x)  - 
		2 \frac{d^2}{dx^2} \log \left(\Wr[\varphi^{(\alpha,\beta)}_{n_1},\dots, 
		\varphi^{(\alpha,\beta)}_{n_{r_1}}, \varphi^{(\alpha,-\beta)}_{m_1},\dots, \varphi^{(\alpha,-\beta)}_{m_{r_2}}]\right)
	\label{eq:Vlamu}
\end{align}
where $V(x)$ is defined in \eqref{eq:Potential}. The differential operator \eqref{eq:Oplamu} has eigenfunctions of the form
\begin{equation} \label{eq:OplamuEF}
	\frac{\Wr[\varphi^{(\alpha,\beta)}_{n_1},\dots, 
		\varphi^{(\alpha,\beta)}_{n_{r_1}}, \varphi^{(\alpha,-\beta)}_{m_1},\dots, \varphi^{(\alpha,-\beta)}_{m_{r_2}},\varphi^{(\alpha,\beta)}_s]}
	{\Wr[\varphi^{(\alpha,\beta)}_{n_1},\dots, 
		\varphi^{(\alpha,\beta)}_{n_{r_1}}, \varphi^{(\alpha,-\beta)}_{m_1},\dots, \varphi^{(\alpha,-\beta)}_{m_{r_2}}]}
\end{equation} 
where $s\geq 0$ and $s\neq n_j$ for every $j=1, \ldots, r_1$. Set $r=r_1+r_2$.
	
Using \eqref{eq:Wr2} and \eqref{eq:Wr1}, we can express	the Wronskian in \eqref{eq:Vlamu} as a Wronskian of the functions $f_1, \ldots, f_r$ defined in \eqref{eq:fj1}-\eqref{eq:fj2}. After a rather straightforward calculation, using elementary Wronskian properties, we end up with a prefactor and the polynomial \eqref{eq:OmegaLaMu},
\begin{multline*}
	\Wr[\varphi^{(\alpha,\beta)}_{n_1},\dots,\varphi^{(\alpha,\beta)}_{n_{r_1}}, \varphi^{(\alpha,-\beta)}_{m_1},\dots, \varphi^{(\alpha,-\beta)}_{m_{r_2}}]
		\\
		= \frac{(-4)^{\frac{r(r-1)}{2}}}{2^{\beta r_2}} \sin(x)^{\left(\alpha+\frac{1}{2}\right)r} \cos(x)^{\left(\beta+\frac{1}{2}\right)r} \cdot \Omega^{(\alpha,\beta)}_{\lambda,\mu}(\cos(2x)).
\end{multline*}
Hence the potential \eqref{eq:Vlamu} can be written as
\begin{multline*}
	V_{\lambda,\mu}(x) 
	= \frac{\left(\alpha-\frac{1}{2}+2r\right)\left(\alpha+\frac{1}{2}\right)}{\sin^2(x)} + \frac{\left(\beta-\frac{1}{2}+2r\right)\left(\beta+\frac{1}{2}\right)}{\cos^2(x)} +\frac{r(r-1)}{\sin^2(x)\cos^2(x)}  \\ -
	2 \frac{d^2}{dx^2}\log \left( \Omega^{(\alpha,\beta)}_{\lambda,\mu}(\cos(2x)) \right).
\end{multline*}
Similarly, the eigenfunctions \eqref{eq:OplamuEF} are
\begin{equation*} 
	(-4)^r \sin(x)^{\left(\alpha+\frac{1}{2}+r\right)} \cos(x)^{\left(\beta+\frac{1}{2}+r\right)}
	\cdot \frac{P^{(\alpha,\beta)}_{\lambda,\mu,n}(\cos(2x))}{\Omega^{(\alpha,\beta)}_{\lambda,\mu}(\cos(2x))}
\end{equation*}
where we choose $n \in \mathbb N_{\lambda,\mu}$ and $s=n-|\lambda|-|\mu|+r_1$.

Via the above described set-up, we derive that the function $ \left(P^{(\alpha,\beta)}_{\lambda,\mu,n} \right)^2 W^{(\alpha,\beta)}_{\lambda,\mu} $ has zero residue at each of its poles in $\mathbb C \setminus \{-1,1\}$. To establish this result, observe that the operator \eqref{eq:Oplamu} has trivial monodromy at every point $p \in \mathbb C \setminus \{-1,1\}$, see Proposition 5.21 in \cite{GarciaFerrero_GomezUllate_Milson}. Next, Proposition 3.3 in \cite{Duistermaat_Grunbaum} gives information about residues in this setting. As the precise arguments can be adapted from Lemma 11 in \cite{Bonneux_Kuijlaars}, the details are left out.
\end{proof}

Finally, we are able to prove the asymptotic behavior of the exceptional zeros. Again, the ideas of the proof are similar as the proof of Theorem 5 in \cite{Bonneux_Kuijlaars} or Theorem 2.3 in \cite{Kuijlaars_Milson}.

\begin{proof}[Proof of Theorem \ref{thm:XJPExceptionalZeros}]
Let $z_j$ be simple zero of $\Omega^{(\alpha,\beta)}_{\lambda,\mu}$ where 
$z_j\in\mathbb{C}\setminus[-1,1]$. Then $z_j$ is a double pole of $W_{\lambda,\mu}^{(\alpha,\beta)}$, and according to Lemma \ref{lem:Residue} we get for a certain constant $C_0\in\mathbb{C}$ that
\begin{equation} \label{eq:Taylorexpansion} 
	(x-z_j)^2 \left( L_{\lambda,\mu,n}^{(\alpha,\beta)}(x) \right)^2 
	W_{\lambda,\mu}^{(\alpha,\beta)}(x)
	= C_0 + O\left( (x-z_j)^2 \right), \qquad \text{as } x \to z_j.
\end{equation}
We may assume that $C_0 \neq 0$, otherwise $z_j$ is a zero of 
$P^{(\alpha,\beta)}_{\lambda,\mu,n}$ as well and then \eqref{eq:XJPExceptionalZeros} is clearly satisfied.

Since $C_0 \neq 0$, we can take an analytic logarithm of \eqref{eq:Taylorexpansion} 
in the neighborhood of $z_j$. The derivative of this logarithm vanishes because there is no residue, see Lemma \ref{lem:Residue}. Therefore, after a few calculations and simplifications, we arrive at the identity
\begin{equation}\label{eq:XJPExceptionalZeros:Proof:2}
	\sum_{k=1}^{N(n)}\frac{1}{z_j-x^{(\alpha,\beta)}_{k,n}}+\sum_{k=1}^{n-N(n)}\frac{1}{z_j-z^{(\alpha,\beta)}_{k,n}}
	=\frac{(\alpha+r)}{2(1-z_j)} - \frac{(\beta+r)}{2(1+z_j)} + \frac{3z_j}{1-z_j^2}
	+ \sum_{\shortstack{$\scriptstyle k=1 $ \\ $ \scriptstyle k\neq j$}}^{|\lambda|+|\mu|}\frac{1}{z_j-z_k}
\end{equation}
where $1 > x_{1,n}^{(\alpha,\beta)} \geq \cdots \geq x^{(\alpha,\beta)}_{N(n),n}>-1$ 
are the regular zeros and $z^{(\alpha,\beta)}_{1,n},\dots,z^{(\alpha,\beta)}_{n-N(n),n}$
are the exceptional zeros of the exceptional Jacobi polynomial and
$z_1, \ldots, z_{|\lambda|+|\mu|}$ are the zeros of $\Omega_{\lambda,\mu}^{(\alpha,\beta)}$.

Now we can proceed completely similar as in the proof of Theorem 5 in \cite{Bonneux_Kuijlaars} (or the proof of Theorem 2.3 in \cite{Kuijlaars_Milson}). We only give the ideas.
\begin{enumerate}
	\item 
	By orthogonality, all $n$ zeros of the Jacobi polynomial $P^{(\alpha+r,\beta+r)}_n$ are in the open interval $(-1,1)$. Because of Corollary \ref{cor:BeardonDriver}, the number of zeros of the exceptional Jacobi polynomial $P^{(\alpha,\beta)}_{\lambda,\mu,n}$ in $(-1,1)$ grows like $n$. Hence, the number of elements in the first sum of the left hand side in \eqref{eq:XJPExceptionalZeros:Proof:2} grows like $n$ for $n$ large enough.
	
	\item 
	The right hand side of \eqref{eq:XJPExceptionalZeros:Proof:2} is independent of $n$.
	
	\item 
	Distinguish two cases: 
	\begin{itemize}
		\item[(a)]
		If $z_j\notin\mathbb{R}$, then consider the imaginary part of both sides in the identity \eqref{eq:XJPExceptionalZeros:Proof:2}.
		
		\item[(b)]
		If $z_j\in\mathbb{R}$, then consider the real part of both sides in the identity \eqref{eq:XJPExceptionalZeros:Proof:2}. 
	\end{itemize}
	By the first item, a short calculation gives that the real/imaginary part of the first sum in the left hand side grows like $-n$. By the second item, the second sum in the left hand has to compensate the first sum to establish the identity. As this second sum is finite, at least 1 element has to grow like $n$. Hence there exists a $k$, such that for $n$ large enough, 
	\begin{equation*}
		\left|\im\left(\frac{1}{z_j-z^{(\alpha,\beta)}_{k,n}}\right)\right| > c \cdot n \quad \text{ or } \quad 
		\left|\re\left(\frac{1}{z_j-z^{(\alpha,\beta)}_{k,n}}\right)\right| > c \cdot n
	\end{equation*}
	for some $c>0$. This results that $z_j$ attracts $z_{k,n}^{(\alpha,\beta)}$ for some $k$ at speed $O\left(n^{-1}\right)$. 
\end{enumerate}
\end{proof}

\section*{Acknowledgements}
The author thanks Arno Kuijlaars for fruitful discussions and a careful reading of a preliminary version of this article. The author also thanks the referees for their useful remarks.

The author is supported in part by the long term structural funding-Methusalem grant of the Flemish Government, and by EOS project 30889451 of the Flemish Science Foundation (FWO).

\end{document}